\documentclass{article}

\usepackage{graphicx}
\usepackage{amsmath}
\usepackage{amssymb}
\usepackage{mathrsfs}
\usepackage{IEEEtrantools}
\usepackage{amsthm}
\usepackage{multirow}
\usepackage{esint}
\usepackage{mathtools}
\usepackage{fullpage}
\usepackage{subcaption}

\newcommand{\RR}{\mathbb{R}}
\newcommand{\TT}{\mathbb{T}}

\newtheoremstyle{plain}
{\topsep}
{1.5\topsep}
{\itshape}
{0pt}
{\bfseries}
{.}
{5pt plus 1pt minus 1pt}
{}

\newtheoremstyle{definition}
{\topsep}
{1.5\topsep}
{\normalfont}
{0pt}
{\bfseries}
{.}
{5pt plus 1pt minus 1pt}
{}

\newtheoremstyle{remark}
{0.5\topsep}
{0.8\topsep}
{\normalfont}
{0pt}
{\itshape}
{.}
{5pt plus 1pt minus 1pt}
{}

\theoremstyle{plain} \newtheorem{lemma}{Lemma}
\theoremstyle{plain} \newtheorem{thm}[lemma]{Theorem}
\theoremstyle{plain} 
\theoremstyle{plain} \newtheorem{lem}[lemma]{Lemma}
\theoremstyle{definition} 
\theoremstyle{remark} \newtheorem*{rmk}{Remark}
\theoremstyle{plain} \newtheorem{cor}[lemma]{Corollary}
\newcommand{\dd}{\,\mathrm{d}}

\begin{document}
\allowdisplaybreaks
\title{On compressible magnetic relaxation in planar symmetry}
\date{}
\author{Taehun Kim}
\maketitle
\pagestyle{plain}

\begin{abstract}
We consider the compressible Magnetic Relaxation Equations on the three-dimensional torus $\TT^{3}$. The system is derived from compressible magnetohydrodynamics (MHD) by replacing the acceleration term with a Darcy-type friction. Under planar symmetry, we establish three main results: (1) local well-posedness for smooth initial data, (2) magnetic relaxation for smooth perturbations of constant steady states, and (3) the absence of vacuum states or implosions prior to and at the time of a potential singularity.
\end{abstract}

\section{Introduction}\label{sec1}

Steady states of the incompressible Euler equations are important because they represent the long-term dynamics of incompressible fluids (see Arnold and Khesin \cite{AK99}; Bedrossian and Masmoudi \cite{BM15}; Ionescu and Jia \cite{IJ21, IJ20}; and a recent review paper by Drivas and Elgindi \cite{DE22}). Among the many mathematical studies of Euler steady states, one area of interest is the construction of steady states with remarkable geometric properties. This has been achieved in various works. For instance, Enciso and Peralta-Salas \cite{EPS12} proved that there exists a smooth diffeomorphism from an arbitrarily given locally finite link to some steady state of the incompressible Euler equation. Furthermore, recent works such as Gavrilov \cite{Gavrilov19} and Constantin, La, and Vicol \cite{CLV19} have established the existence of smooth steady incompressible flows with compact support. Finally, there have been geometric approaches that led to the development of topological hydrodynamics; for a comprehensive survey of this field, see Moffatt's recent review \cite{Moffatt21} and references therein.

To address the problem of finding steady flows with a given topology, Arnold \cite{Arnold74} proposed a dynamic approach, later refined by Moffatt \cite{Moffatt85}. This approach exploits the fact that the incompressible Euler equations and the magnetohydrostatic (MHS) equations share the same steady states. The key idea is to use a variant of the MHD equations that is dissipative (with respect to a suitable energy) and preserves the topology of the magnetic field lines throughout the evolution, a process called magnetic relaxation. One may then ask whether a global solution exists and determine whether the velocity field $u$, and the magnetic field $B$ converge in the infinite-time limit to an MHS steady state $u=0$, $B=B_{\text{steady}}$. This magnetic relaxation procedure has also been used for numerically computing MHS steady states, as demonstrated by codes such as PIES \cite{PIESpaper}, HINT \cite{HINTpaper2, HINTpaper1}, SIESTA \cite{SIESTApaper}, and the more recent MRX \cite{MRXpaper}.

Among various magnetic relaxation frameworks, the one that has received the most mathematical attention is Moffatt's Magnetic Relaxation Equations (MRE). This system is derived by replacing the acceleration term $(\partial_{t}+u\cdot \nabla)u$ in the standard MHD equations with a friction term $(-\Delta)^{\alpha}u$: \begin{eqnarray} \label{20260218eq1}
\begin{cases}
(-\Delta)^{\alpha}u = -\nabla p + B \cdot \nabla B, \\
\partial_{t}B + u \cdot \nabla B = B \cdot \nabla u, \\
\nabla \cdot u = \nabla \cdot B =0.
\end{cases}
\end{eqnarray} Here, $\alpha \ge 0$ is a regularization parameter. For the case $\alpha=0$, Brenier \cite{Brenier14} introduced the concept of dissipative solutions---resembling Lions' dissipative weak solutions for the Euler equations \cite{Lions96}---and proved their global existence in $2$D. The local existence of $H^{s}(\TT^{d})$ solutions was established by Beekie, Friedlander, and Vicol \cite{BFV22} for $\alpha \ge 0$ and $s>\frac{d}{2}+1$, results which were later refined by Bae, Kwon, and Shin \cite{BKS25}. Regarding global existence, \cite{BFV22} proved the case for $\alpha, s > \frac{d}{2}+1$ and showed that the velocity field $u$ converges to zero in the Lipschitz norm. Subsequently, \cite{BKS25} extended these global results to the regime $\alpha \ge \frac{d}{2}+1$ and $s \ge 1$. See also Tan \cite{Tan26} for the global existence and velocity relaxation of weak solutions with $L^{2+\epsilon}$ initial data when $\alpha\ge \frac{d}{2}+1$.

However, the relaxation of the velocity field $u$ does not necessarily guarantee the convergence of the magnetic field $B$ to a steady state, and indeed \cite{BFV22} provided an example where the $H^{1}$ norm of $B$ diverges as $t \to \infty$. Nevertheless, some affirmative magnetic relaxation results exist. Paper \cite{BFV22} proves the asymptotic stability of the $2$D constant steady state $(u, B)=(0, e_{1})$ under high Sobolev perturbations. Mohammadkhani and Nguyen \cite{MN25} proved similar relaxation results for non-constant shear flows on $\TT \times (-1, 1)$ under Neumann-type boundary conditions for both $u$ and $B$, as well as for certain 2.5D solutions.

\subsection{Compressible Magnetic Relaxation}

In numerical implementations for finding MHS steady states, using magnetic relaxation, a compressible velocity field is sometimes used. Enforcing incompressibility requires computing the inverse Laplacian for each step, as in PIES \cite{PIESpaper}. But this expensive computation could be avoided if one uses a compressible setup. Indeed, SIESTA \cite{SIESTApaper} employs the Kulsrud-Kruskal MHD energy minimization principle \cite{KK58}, which assesses plasma stability through an ideal MHD energy functional which includes compressional effects.

In analogy with the incompressible MRE, an isentropic compressible MRE model (corresponding to $\alpha=0$ in \eqref{20260218eq1}) can be formulated by replacing the acceleration term in ideal inviscid 3D compressible MHD with Darcy-type friction:
\begin{eqnarray}\label{20260110eq1}
\begin{cases}
\partial_{t}\rho + \nabla \cdot (\rho u)=0, \\
\rho u = -\nabla p + (\nabla \times B)\times B, \\
\partial_{t}B + \nabla \cdot (u \otimes B) = B \cdot \nabla u, \\
\nabla \cdot B=0,
\end{cases}
\end{eqnarray} where the pressure is given by the ideal gas law with constant entropy: \[
p=\frac{1}{\gamma}\rho^{\gamma}.
\] Here, $\gamma>1$ is the adiabatic gas constant. For complex, non-monatomic fluids, $\gamma$ approaches $1$, reflecting a departure from simpler ideal gas models. It is worth noting that neither system \eqref{20260110eq1} nor the ideal inviscid compressible MHD exhibit any explicit diffusion (they are topology preserving with respect to magnetic streamlines), which makes it difficult to prove any relaxation results. 

To the author's knowledge, fine analysis of ideal inviscid compressible MHD has only been done under certain symmetry assumptions, which reduce the problem to a 1D system (see Chen, Young, and Zhang \cite{CYZ13}; Kang \cite{Kang18}; An, Chen, and Yin \cite{ACY22}; and Ding and Yin \cite{DY25}). Such reduction is standard for the analysis of hyperbolic systems (going back to John \cite{John74} and even earlier), and is the state of the art for compressible MHD. Accordingly, we will continue our analysis under planar symmetry, by considering the following ansatz: \[
B=(B_{0}, 0, B^{z}(t, x)), \quad u=(u^{x}(t, x), u^{y}(t, x), u^{z}(t, x)), \quad \rho=\rho(t, x) \qquad (t \ge 0, x \in \TT),
\] where $B_{0} \neq 0$ is a constant. In this paper, we denote the initial data of $B$ by $B_{t=0}$ instead of $B_{0}$. Then the system \eqref{20260110eq1} reads as
\begin{eqnarray} \label{20260130eq1}
\begin{cases}
\partial_{t}\rho + \partial_{x}(\rho u^{x}) =0, \\
\rho u^{x} = -\rho^{\gamma-1}\partial_{x}\rho -B^{z}\partial_{x}B^{z}, \\
\rho u^{y} = 0, \\
\rho u^{z} = B_{0}\partial_{x}B^{z}, \\
\partial_{t}B^{z} + B^{z}\partial_{x}u^{x} + u^{x}\partial_{x}B^{z} = B_{0}\partial_{x}u^{z}.
\end{cases}
\end{eqnarray}
By substituting the expressions for $u^{x}$ and $u^{z}$, we derive the following system:
\begin{eqnarray}\label{20260110eq2}
\begin{cases}
\partial_{t}\rho = \partial_{x}^{2}\left ( \frac{1}{\gamma}\rho^{\gamma} + \frac{1}{2}B^{2}\right ), \\
\partial_{t}B = \partial_{x}\left ( \frac{B}{\rho}\partial_{x}\left ( \frac{1}{\gamma}\rho^{\gamma}+\frac{1}{2}B^{2}\right ) + \frac{B_{0}^{2}}{\rho}\partial_{x}B\right ),
\end{cases}
\end{eqnarray} where, for simplicity, we have replaced $B^{z}$ with $B$. Note that when $B_{0} \neq 0$, the diffusion in the $B$-equation is guaranteed to be nondegenerate if the density $\rho$ remains finite.

The goal of this paper is to prove that the 1D compressible MRE model \eqref{20260110eq2} is locally well-posed, and that magnetic relaxation occurs in the vicinity of constant steady states. These results are summarized in the following two theorems.

\begin{thm} \label{20260110thm1}
Assume that $1<\gamma<2$ and $B_{0}\neq 0$. Consider smooth initial data $\rho_{t=0}, B_{t=0} \in C^{\infty}(\TT)$. Then there exists a maximal existence time $T=T(\gamma, B_{0}, \|\rho_{t=0}\|_{H^{3}}, \|B_{t=0}\|_{H^{3}}, \|\rho_{t=0}^{-1}\|_{L^{\infty}})>0$ and unique smooth solutions $\rho, B \in C^{\infty}([0, T) \times \TT)$ to the system \eqref{20260110eq2}, such that if $T<\infty$ then $\lim_{t \to T-} \left (\|\rho\|_{H^{3}}+\|B\|_{H^{3}}\right ) = \infty$.
 
Moreover, there exists a constant $M=M(\gamma, B_{0}, \|\rho_{t=0}\|_{L^{\infty}}, \|B_{t=0}\|_{L^{\infty}}, \|\rho_{t=0}^{-1}\|_{L^{\infty}})>0$, such that $\rho(t, x)$, $|B(t, x)|$, and $\rho^{-1}(t, x)$ are bounded from above by $M$ for all $0 \le t < T$ and $x \in \TT$. In particular, $\rho \ge M^{-1} > 0$ on $[0, T)\times \TT$.
\end{thm}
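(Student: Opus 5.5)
The plan has two parts: a standard quasilinear parabolic well-posedness argument for the first assertion, and a maximum-principle (invariant region) argument in Lagrangian mass coordinates for the pointwise bounds. The second part is the real content.

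\textbf{Local well-posedness.} Write \eqref{20260110eq2} as $\partial_t U=\partial_x(A(U)\partial_x U)$ with $U=(\rho,B)$ and
\[
A(U)=\begin{pmatrix}\rho^{\gamma-1} & B\\ B\rho^{\gamma-2} & (B^{2}+B_0^{2})/\rho\end{pmatrix}.
\]
We have $\det A=B_0^{2}\rho^{\gamma-2}>0$ and $\operatorname{tr}A>0$. The discriminant is $(a-d)^{2}+4bc$ with $bc=B^{2}\rho^{\gamma-2}\ge 0$, so the eigenvalues are real and positive. Hence the system is normally parabolic as long as $\rho$ stays in a compact subset of $(0,\infty)$ and $B$ stays bounded.

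Since $A$ is not symmetric, I would build an explicit symmetrizer $S(U)$ that makes $SA$ positive definite. One option is the Hessian of the physical energy $\int \rho^{\gamma}/(\gamma(\gamma-1))+B^{2}/(2\rho)\,dx$. Alternatively one can work with the diagonalizing left eigenvectors, which are smooth because $B_0\neq 0$ keeps the eigenvalues separated or uniformly positive. With this in hand:
\begin{itemize}
\item Run Galerkin (or mollified) approximations.
\item Derive $H^{3}$ energy estimates for $\partial_x^{k}U$, $k\le 3$. Commutators are controlled by Moser estimates in terms of $\|U\|_{H^{3}}$ and $\|\rho^{-1}\|_{L^\infty}$, and the lower bound on $\rho$ is propagated for short time through $H^{3}\subset C^{2}$.
\item Pass to the limit, and obtain uniqueness from an $L^{2}$ estimate on the difference of two solutions.
\item Get $C^\infty$ regularity by parabolic smoothing or by propagating $H^{k}$ for every $k$.
\end{itemize}
The continuation criterion is the usual one: as long as $\|U\|_{H^{3}}$ and $\|\rho^{-1}\|_{L^\infty}$ stay bounded, the solution extends. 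The bound on $\rho^{-1}$ is then removed from the criterion by the second part.

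\textbf{Pointwise bounds.} Pass to Lagrangian mass coordinates, $\dd m=\rho\dd x$ with $\partial_x=\rho\partial_m$. Set $v=1/\rho$, $b=B/\rho$ and $P=\rho^\gamma/\gamma+B^{2}/2$. From the form \eqref{20260130eq1} (using $u^x=-\partial_m P$ and $u^z=B_0\partial_m B$) the system becomes
\[
\partial_t v=-\partial_m^{2}P,\qquad \partial_t b=B_0^{2}\,\partial_m^{2}B,\qquad P=\tfrac1\gamma v^{-\gamma}+\tfrac{b^{2}}{2v^{2}},\quad B=b/v .
\]
In the "flux" variables $W=(P,B)$ this reads $\partial_t W=J\,D\,\partial_m^{2}W$, where $J=\partial(P,B)/\partial(v,b)$ and $D=\operatorname{diag}(-1,B_0^{2})$. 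Every term is a second derivative of $W$, so the Chueh--Conley--Smoller theory of invariant regions applies. A closed region $\Sigma=\{G_i(W)\le c_i\}$ is invariant provided:
\begin{itemize}
\item at each boundary point, $\nabla G_i$ is a left eigenvector of $JD$;
\item $G_i$ is quasi-convex there.
\end{itemize}
There is no lower-order term, so no reaction condition arises.

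I would compute the two left eigenvectors of $JD$. The resulting Riemann-invariant-like directions determine two families of curves in the $(P,B)$ (equivalently $(\rho,B)$) plane. I would then pick $c_i$ from $\|\rho_{t=0}\|_{L^\infty}$, $\|B_{t=0}\|_{L^\infty}$ and $\|\rho_{t=0}^{-1}\|_{L^\infty}$ so that the bounded region they cut out contains the initial data. The region must also lie inside $\{\rho\le M,\ |B|\le M,\ \rho\ge M^{-1}\}$.

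An upper bound on $P$ immediately gives upper bounds on $\rho$ and $|B|$. The lower bound on $\rho$ should come from the second family, for instance a bound on $v$ or on $b$ in terms of $P$. I expect the hypothesis $1<\gamma<2$ to enter exactly in checking that the relevant level sets are bounded away from $\rho=0$ and have the correct convexity.

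\textbf{Main obstacle.} The hard step is this second family: exhibiting explicit invariant functions and verifying quasi-convexity. A rectangle in $(P,B)$ is \emph{not} invariant, because $JD$ has off-diagonal entries $B_0^{2}b/v^{2}$ and $b/v^{2}$. If the exact eigenvector geometry proves intractable, I would fall back on a scalar maximum principle for a well-chosen single function, such as $P$ or $P+\lambda B^{2}$. At an extremum of such a function one can substitute the constraint $\partial_m(\cdot)=0$ and check that the bad cross terms are absorbed.

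Once these bounds are established, they hold on all of $[0,T)$. This justifies the final statement $\rho\ge M^{-1}$ and shows that only the $H^{3}$ norms can blow up at $T$.
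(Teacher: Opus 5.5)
\textbf{Local theory.} This part of your plan is standard and matches the paper in substance, with one correction to the symmetrizers you name. The Hessian of $\rho^{\gamma}/(\gamma(\gamma-1))+B^{2}/(2\rho)$ does not make $SA$ positive definite for large $|B|$. The left eigenvector fields are not smooth either (see below). The Hessian $\mathrm{diag}(\rho^{\gamma-2},1)$ of $\rho^{\gamma}/(\gamma(\gamma-1))+B^{2}/2$ does work, and it is what the paper uses.

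\textbf{The gap: the pointwise bounds.} These are the actual content of the theorem, and the proposal does not prove them. You correctly see that a maximum principle for $G(\rho,B)$ requires $\nabla G$ to be a left eigenvector of the principal symbol; this is exactly the paper's starting point. But you never produce such $G$, so everything that must then be proved is left open:
\begin{itemize}
\item that two such families exist globally and are $C^{2}$;
\item that the leftover quadratic form has the right sign;
\item that the resulting region is bounded and stays away from $\rho=0$.
\end{itemize}
This is not routine. Contrary to your remark, $B_{0}\neq 0$ does not keep the eigenvalues apart: at $B=0$, $\rho^{\gamma}=B_{0}^{2}$ the matrix $JD$ is a multiple of the identity, so the eigen-directions are singular there. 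The paper handles this as follows.
\begin{itemize}
\item It integrates the eigenvector relation $\partial_{\rho}F=\lambda_{\pm}(\rho,B)\,\partial_{B}F$ explicitly. This gives $w$ and $z$, which equal $+\infty$ on $\{B=0,\ \rho^{\gamma}\le B_{0}^{2}\}$ and $\{B=0,\ \rho^{\gamma}\ge B_{0}^{2}\}$ respectively.
\item It works with $W=e^{-w}$ and $Z=e^{-z}$. These are $C^{2}$ across the singular segments because, for $1<\gamma<2$, $w$ and $z$ blow up fast enough there.
\item The sign condition becomes a concrete algebraic inequality.
\item The upper bounds use $\partial_{\rho}W\ge 0$ and $B\,\partial_{B}W\ge 0$, together with the limits of $w$ as $\rho\to\infty$ or $|B|\to\infty$.
\item The lower bound on $\rho$ uses $Z(\rho,0)\to 1$ as $\rho\to 0^{+}$, together with $B\,\partial_{B}Z\ge 0$ and $Z_{0}<1$.
\end{itemize}

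\textbf{Your fallback does not work.} Take $G=P$ or $G=P+\lambda B^{2}$. Then $\partial_{t}G=\nabla_{(P,B)}G\cdot JD\,\partial_{m}^{2}(P,B)$, but at a spatial maximum only the single combination $\nabla_{(P,B)}G\cdot\partial_{m}^{2}(P,B)$ is controlled. Unless $\nabla_{(P,B)}G$ is a left eigenvector of $JD$, the transverse component of $\partial_{m}^{2}(P,B)$ survives with no sign; for $G=P$ it is $B_{0}^{2}\,b\,v^{-2}\,\partial_{m}^{2}B$. This is a second-order term, and the first-order constraint $\partial_{m}G=0$ cannot absorb it. For the same reason your ``upper bound on $P$'' has no source, since $P$ is not an invariant. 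As written, then, the proposal neither produces $M$ nor excludes vacuum. Without that, your continuation criterion still involves $\|\rho^{-1}\|_{L^{\infty}}$.

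\textbf{What your reformulation buys.} The Lagrangian reformulation is still a real gain in bookkeeping. In the variables $(P,B)$ the system has no first-order terms. Once the paper's $W$ and $Z$ are in hand, the needed sign condition is exactly quasi-convexity of their level curves in the $(P,B)$-plane. The paper's Eulerian computation, by contrast, must also track the extra quadratic first-derivative terms produced by the equation itself.
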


\begin{thm} \label{20260110thm2}
Assume that $1<\gamma <2$ and $B_{0} \neq 0$. Let $\overline{\rho}>0$ and $\overline{B} \in \RR$ be two constants. Let $s \ge 6$ be an integer. Then there exists a constant $\epsilon=\epsilon(\gamma, B_{0}, \overline{\rho}, \overline{B})$ such that if initial data $\rho_{t=0}, B_{t=0} \in C^{\infty}(\TT)$ satisfy \[
\|\rho_{t=0}-\overline{\rho}\|_{H^{s}} + \|B_{t=0}-\overline{B}\|_{H^{s}} \le \epsilon, \quad \fint_{\TT} \rho_{t=0} = \overline{\rho}, \quad  \fint_{\TT} B_{t=0} = \overline{B},
\] then the solution $(\rho, B)$ attained from Theorem \ref{20260110thm1} is global in time, and satisfies \[
(\rho(t), B(t)) \xrightarrow[]{t\to\infty} (\overline{\rho}, \overline{B}) \text{ in } H^{m}(\TT)
\] for any nonnegative integer $m < s$. In particular, $u(t) \to 0$ in $H^{m-1}(\TT)$, where $u$ is related to $(\rho, B)$ via \eqref{20260130eq1}.
\end{thm}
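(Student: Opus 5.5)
Assuming Theorem \ref{20260110thm1}, we combine a continuation argument with a priori estimates for the perturbation. Write $\rho=\overline{\rho}+r$, $B=\overline{B}+b$ and $P=\frac{1}{\gamma}\rho^{\gamma}+\frac12 B^2$. Both equations in \eqref{20260110eq2} are in divergence form, so $\fint r=\fint b=0$ for all time and Poincaré's inequality applies to $r,b$ and their derivatives. Linearizing at $(\overline{\rho},\overline{B})$ gives
\[
\partial_t\begin{pmatrix} r\\ b\end{pmatrix}=\mathcal{A}\,\partial_x^2\begin{pmatrix} r\\ b\end{pmatrix},\qquad
\mathcal{A}=\begin{pmatrix}\overline{\rho}^{\gamma-1} & \overline{B}\\ \overline{B}\,\overline{\rho}^{\gamma-2} & (\overline{B}^2+B_0^2)/\overline{\rho}\end{pmatrix}.
\]
Conjugating by $\mathrm{diag}(1,\overline{\rho}^{-1/2})$-type weights makes $\mathcal{A}$ similar to a symmetric matrix. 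Its determinant is $\overline{\rho}^{\gamma-2}B_0^2>0$ and its trace is positive, so it is positive definite. Therefore the linearized system is uniformly parabolic with some rate $\lambda>0$.

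To avoid fiddling with weights, I would use a nonlinear energy adapted to the structure. Formally the system dissipates the energy
\[
E=\int \Bigl(\tfrac{1}{\gamma(\gamma-1)}\rho^{\gamma}+\tfrac12 B^2\Bigr),
\]
since $u=-\rho^{-1}(\partial_xP,\ \cdot\ ,-B_0\partial_x B)$ corresponds to friction. Indeed
\[
\frac{d}{dt}E=-\int \rho\bigl(|u^x|^2+|u^z|^2\bigr).
\]
The relative-entropy version of $E$ around $(\overline{\rho},\overline{B})$ is coercive on $L^2$ for small perturbations. Higher norms follow from differentiating $k$ times, $k\le s$, and testing $\partial_x^k$ of the system against $\mathcal{S}\partial_x^k(r,b)$, where $\mathcal{S}$ is the symmetrizer of $\mathcal{A}$ (a constant positive matrix). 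The top-order terms give
\[
-c\|\partial_x^{k+1}(r,b)\|_{L^2}^2
\]
plus commutators from $\mathcal{A}(\rho,B)-\mathcal{A}(\overline{\rho},\overline{B})$ and from the quadratic terms. Using Moser/Kato–Ponce estimates and $H^1\subset L^\infty$, these are bounded by
\[
C\,\|(r,b)\|_{H^{s}}\,\|\partial_x(r,b)\|_{H^{s}}^2 .
\]
This yields
\[
\frac{d}{dt}\mathcal{E}_s+\bigl(c-C\mathcal{E}_s^{1/2}\bigr)\|\partial_x(r,b)\|_{H^s}^2\le 0,
\]
with $\mathcal{E}_s\simeq\|(r,b)\|_{H^s}^2$. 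The $L^\infty$ bounds on $\rho^{\pm1}$ and $B$ from Theorem \ref{20260110thm1} keep the coefficients (in particular $1/\rho$) uniformly controlled.

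A bootstrap closes the argument: while $\mathcal{E}_s\le\delta$ with $C\delta^{1/2}<c/2$, Poincaré gives $\mathcal{E}_s'\le -c'\mathcal{E}_s$. Hence $\mathcal{E}_s$ decays exponentially and never reaches $\delta$ if $\epsilon$ is small. By the blow-up criterion of Theorem \ref{20260110thm1} (since $s\ge 3$), the solution is global, and $(r,b)\to0$ in $H^s$, hence in $H^m$ for $m<s$. Actually, the $H^s$ decay is enough; the restriction $m<s$ only reflects the paper's statement. The convergence $u\to0$ in $H^{m-1}$ then follows from \eqref{20260130eq1}, since $u$ is given by products of $\rho^{-1}$, $B$ and first derivatives of $(\rho,B)$.

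The main obstacle is the top-order estimate. The nonlinear diffusion matrix $\mathcal{A}(\rho,B)$ is not symmetric, and it contains the term $\frac{B}{\rho}\partial_x P$, which couples second derivatives of $\rho$ into the $B$-equation. I would handle this with a state-dependent symmetrizer $\mathcal{S}(\rho,B)$ close to the constant one. The time derivatives of $\mathcal{S}$ are then absorbed as lower order, since $\partial_t(\rho,B)$ is controlled by $\|\cdot\|_{H^2}$, which is small. The regularity margin $s\ge6$ leaves ample room for these commutator estimates.
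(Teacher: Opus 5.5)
Your argument is correct, and it takes a genuinely different (and somewhat stronger) route than the paper's.

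\textbf{The paper's route.} The paper splits into two cases.
\begin{itemize}
\item For $\overline{B}=0$ it runs separate unweighted $\dot H^{k}$ estimates for $\rho$ and $B$; the top-order cross terms carry the small factor $B$.
\item For $\overline{B}\neq 0$ it diagonalizes the frozen matrix through the left-eigenvector combinations $\zeta_{i}\rho+B$, $i=1,2$.
\end{itemize}
In both cases it uses a two-tier bootstrap. The $H^{s}$ norm is only shown to stay bounded, via the Gr\"onwall bound of Corollary~\ref{20260110cor1}; its exponent is integrable because $\lfloor\frac{s+3}{2}\rfloor\le s-2$. Exponential decay is proved only at level $s-2$, from inequalities in which the dissipation has been discarded. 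The leftover $\|\partial_{x}^{s-1}(\rho,B)\|_{L^{2}}^{2}$ must then be controlled by interpolating between the bounded $H^{s}$ norm and the decaying $H^{s-2}$ norm. This is where the hypothesis $s\ge 6$ and the restriction $m<s$ come from.

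\textbf{Your route.} You keep the dissipation of the symmetrized energy and absorb the cubic remainder $C\|(r,b)\|_{H^{s}}\|\partial_{x}(r,b)\|_{H^{s}}^{2}$ by smallness, which closes a single-level bootstrap. Since the system is exactly $\partial_{t}U=\partial_{x}(\mathcal{A}(U)\partial_{x}U)$ with $U=(\rho,B)$ and $\overline{U}=(\overline{\rho},\overline{B})$, the commutator bound you claim does hold. This approach:
\begin{itemize}
\item treats both values of $\overline{B}$ uniformly;
\item gives decay in $H^{s}$ itself, not just $H^{m}$ for $m<s$;
\item already works for $s\ge 3$;
\item does not need the $W,Z$ maximum principle, since in the bootstrap regime $\|\rho-\overline{\rho}\|_{L^{\infty}}\lesssim\|\rho-\overline{\rho}\|_{H^{1}}$ keeps $\rho$ near $\overline{\rho}$.
\end{itemize}
What the paper's route buys is mainly economy: it reuses Lemma~\ref{20260110lem1} and Corollary~\ref{20260110cor1} exactly as stated.

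\textbf{Minor points.}
\begin{itemize}
\item The constant symmetrizer of $\mathcal{A}(\overline{\rho},\overline{B})$ is $\mathrm{diag}(\overline{\rho}^{\gamma-2},1)$. Equivalently, you conjugate by $\mathrm{diag}(1,\overline{\rho}^{(\gamma-2)/2})$, not by $\overline{\rho}^{-1/2}$.
\item The constant symmetrizer suffices on its own: the symmetric part of $\mathcal{S}\mathcal{A}(U)$ stays uniformly positive when $\|U-\overline{U}\|_{L^{\infty}}$ is small. So the state-dependent symmetrizer you flag as the main obstacle is not needed.
\item If you do use a state-dependent one, $\mathrm{diag}(\rho^{\gamma-2},1)$ symmetrizes $\mathcal{A}(\rho,B)$ exactly; it is the weight in Lemma~\ref{20260110lem1}. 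Its time derivative is controlled through $\|\partial_{t}\rho\|_{L^{\infty}}$, which needs an $H^{3}$ bound rather than an $H^{2}$ bound.
\end{itemize}
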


\begin{rmk}
From the equation \eqref{20260110eq2}, it is clear that $\int_{\TT} \rho$ and $\int_{\TT} B$ are constant in time.
\end{rmk}
\begin{rmk}
The result is still true without the assumption $\gamma<2$. This redundant assumption is added to simplify the proof using the result in Theorem \ref{20260110thm1}, that $\rho$ is bounded away from zero.
\end{rmk}

There is a vast literature on stabilization effects in MHD near a constant magnetic field. Lin, Xu, and Zhang \cite{LXZ15} proved the global well-posedness of $2$D viscous incompressible MHD near a constant steady state, a result later refined by Ren, Wu, Xiang, and Zhang \cite{RWXZ14} by removing initial data admissibility conditions and including asymptotic stability results. Additionally, G\'erard-Varet and Prestipino \cite{GP17} derived boundary layer models for 3D incompressible MHD around a constant magnetic field and studied their linear stability. Our second result (Theorem \ref{20260110thm2}) establishes the relaxation to a constant steady state for the ideal inviscid compressible MRE under planar symmetry. This result provides a rigorous mathematical verification of magnetic relaxation, a topic that remains less explored in the compressible setting. %This result contributes to the limited literature in which magnetic relaxation has been rigorously proven. 
Moreover, it may serve as a first step toward the mathematical justification of numerical schemes that utilize the compressible MRE framework.

\subsection{Idea of the proof}\label{subsec1.2}

In this section, we discuss briefly the main idea to prove Theorem \ref{20260110thm1}. The key step is to prove that vacuum does not happen before potential singularities. The idea is to use a maximum principle, but the main obstacle is that the system \eqref{20260110eq2} has a mixed diffusion term: \[
\begin{cases}
\partial_{t}\rho = \rho^{\gamma-1}\partial_{x}^{2}\rho + B\partial_{x}^{2}B + (\text{lower order terms}), \\
\partial_{t}B = \rho^{\gamma-2}B \partial_{x}^{2}\rho + \frac{B^{2}+B_{0}^{2}}{\rho}\partial_{x}^{2}B + (\text{lower order terms}).
\end{cases}
\] Hence at the point where $\rho$ is minimized (or maximized), we have no information on the sign of $\partial_{x}^{2}B$, and vice versa. To remedy this, we try to search for a new variable $F(\rho, B)$ such that the equation satisfied by $F$ has the form \[
\partial_{t}F = c(\rho, B) \partial_{x}^{2}F + (\text{lower order terms}).
\] Ignoring all the lower order terms, we have \[
\partial_{t}F \approx \partial_{x}^{2}\rho \left ( \rho^{\gamma-1}\partial_{\rho}F + \rho^{\gamma-2}B \partial_{B}F\right ) + \partial_{x}^{2}B \left ( B \partial_{\rho}F + \frac{B^{2}+B_{0}^{2}}{\rho}\partial_{B}F \right ).
\] We want these second order terms to appear exactly in the expansion of $\partial_{x}^{2}F$, \[
\partial_{x}^{2}F \approx \partial_{x}^{2}\rho \partial_{\rho}F + \partial_{x}^{2}B \partial_{B}F. 
\] Hence we would want \[
\frac{\rho^{\gamma-1}\partial_{\rho}F + \rho^{\gamma-2}B\partial_{B}F}{B\partial_{\rho}F + \frac{B^{2}+B_{0}^{2}}{\rho}\partial_{B}F} = \frac{\partial_{\rho}F}{\partial_{B}F},
\] which gives \begin{eqnarray} \label{20260218eq2}
\partial_{\rho}F = \partial_{B}F \cdot \frac{-\big(B^{2}+B_{0}^{2}-\rho^{\gamma}\big ) \pm \sqrt{\big(B^{2}+B_{0}^{2}-\rho^{\gamma}\big)^{2}+4B^{2}\rho^{\gamma}}}{2B\rho}.
\end{eqnarray} The linearized version of this same relation is used to prove the relaxation results in Section \ref{subsec3.3} and \ref{subsec3.4}. Solving \eqref{20260218eq2} (without linearization) formally, using the method of characteristics, gives two different family of solutions for $F$:
\begin{IEEEeqnarray*}{rCl}
\IEEEeqnarraymulticol{3}{l}{(B^{2}+B_{0}^{2})\left ( \frac{\pm \big(B^{2}+B_{0}^{2}-\rho^{\gamma}\big )+\sqrt{\big(B^{2}+B_{0}^{2}-\rho^{\gamma}\big )^{2}+4B^{2}\rho^{\gamma}}}{B^{2}}\right )^{\frac{2}{2-\gamma}}} \\
&& \quad\mp\int_{\frac{\pm \big(B^{2}+B_{0}^{2}-\rho^{\gamma}\big )+\sqrt{\big(B^{2}+B_{0}^{2}-\rho^{\gamma}\big )^{2}+4B^{2}\rho^{\gamma}}}{B^{2}}}^{4}\frac{2B_{0}^{2}}{2-\gamma}\frac{1\mp\frac{1}{4}s}{\left (1\mp\frac{1}{2}s\right )^{2}}s^{\frac{2}{2-\gamma}}\dd s.
\end{IEEEeqnarray*}
Now that we have found a diagonalizing variable $F(\rho, B)$, we can apply a maximum (minimum) principle for these variables to prove that $\rho, B$ and $\rho^{-1}$ are bounded. Details can be found in Section \ref{subsec2.2}.

\subsection{Acknowledgments}\label{subsec1.3}

The author would like to thank Vlad Vicol for various insights and thoughtful discussions. The author was partially supported by the NSF grant DMS-2307681.

\section{Local well-posedness and absence of vacuum}\label{sec2}

\subsection{A-priori estimate}

Throughout this paper, $C$ denotes generic positive constant, possibly with subindices to indicate its dependence, whose values may change from line to line.

\begin{lem}[A-priori weighted $L^{2}$ estimate]\label{20260111lem1}
Assume that $\gamma>1$ and $B_{0} \neq 0$. Let $\rho, B \in C^{\infty}([0, T]\times \TT)$ be smooth solutions to the system \eqref{20260110eq2}. Assume that $\rho \ge \eta$ on $[0, T] \times \TT$ for some positive constant $\eta>0$. Then one has \[
\partial_{t}\int_{\TT} \left (\frac{1}{\gamma(\gamma-1)}\rho^{\gamma}+\frac{1}{2}B^{2}\dd x\right ) \le 0.
\]
\end{lem}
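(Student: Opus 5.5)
We differentiate the energy in time and integrate by parts on $\TT$, using the conservative (divergence) form of \eqref{20260110eq2}. Set
\[
P := \frac{1}{\gamma}\rho^{\gamma} + \frac{1}{2}B^{2},
\]
which is the total (gas plus magnetic) pressure. The first equation then reads $\partial_t\rho = \partial_x^2 P$, and the second reads
\[
\partial_t B = \partial_x\Big(\frac{B}{\rho}\partial_x P + \frac{B_0^2}{\rho}\partial_x B\Big).
\]
Smoothness of $\rho$ and $B$, together with the lower bound $\rho\ge\eta>0$, makes every quotient by $\rho$ smooth on $[0,T]\times\TT$. Hence integration by parts on the torus is justified and produces no boundary terms.

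Next we compute the two contributions to the time derivative:
\[
\frac{d}{dt}\int_{\TT}\frac{\rho^{\gamma}}{\gamma(\gamma-1)}\dd x = \int_{\TT}\frac{\rho^{\gamma-1}}{\gamma-1}\,\partial_x^2P\dd x = -\int_{\TT}\rho^{\gamma-2}\partial_x\rho\,\partial_xP\dd x,
\]
\[
\frac{d}{dt}\int_{\TT}\frac{B^2}{2}\dd x = -\int_{\TT}\frac{B\partial_xB}{\rho}\,\partial_xP\dd x - \int_{\TT}\frac{B_0^2}{\rho}(\partial_xB)^2\dd x.
\]
The key observation is that
\[
\rho^{\gamma-2}\partial_x\rho = \frac{\rho^{\gamma-1}\partial_x\rho}{\rho},
\]
so the first two terms combine into
\[
-\int_{\TT}\frac{1}{\rho}\big(\rho^{\gamma-1}\partial_x\rho + B\partial_xB\big)\partial_xP\dd x = -\int_{\TT}\frac{(\partial_xP)^2}{\rho}\dd x.
\]
Summing, the energy identity is
\[
\frac{d}{dt}\int_{\TT}\Big(\frac{\rho^\gamma}{\gamma(\gamma-1)}+\frac{B^2}{2}\Big)\dd x = -\int_{\TT}\frac{(\partial_xP)^2 + B_0^2(\partial_xB)^2}{\rho}\dd x \le 0.
\]
The right-hand side is nonpositive because $\rho>0$.

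This is the rigorous counterpart of the physical dissipation $-\int\rho|u|^2$: by \eqref{20260130eq1}, $\rho u^x = -\partial_xP$ and $\rho u^z = B_0\partial_xB$. The only delicate point is choosing the weight $\frac{1}{\gamma(\gamma-1)}$ so that the $\rho$-term yields exactly $\rho^{\gamma-2}\partial_x\rho$, which is what makes the cross terms assemble into a perfect square. Beyond that, the argument uses only $\gamma>1$, which keeps the weight finite, and the positivity of $\rho$.
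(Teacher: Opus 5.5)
Your proof is correct and is essentially the paper's argument. The paper expands everything and then re-completes the square as $-\int_{\TT}\big[\big(\rho^{\gamma-\frac{3}{2}}\partial_x\rho+\rho^{-\frac{1}{2}}B\partial_xB\big)^2+\big(B_0\rho^{-\frac{1}{2}}\partial_xB\big)^2\big]$, which is exactly your $-\int_{\TT}\rho^{-1}\big((\partial_xP)^2+B_0^2(\partial_xB)^2\big)$ because $\partial_xP=\rho^{\gamma-1}\partial_x\rho+B\partial_xB$; keeping $\partial_xP$ grouped, as you do, just makes the square visible without expanding.
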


\begin{proof}[Proof of Lemma \ref{20260111lem1}]
We have
\begin{IEEEeqnarray*}{rCl}
\IEEEeqnarraymulticol{3}{l}{\partial_{t}\int_{\TT} \left (\frac{1}{\gamma(\gamma-1)}\rho^{\gamma}+\frac{1}{2}B^{2}\dd x\right )} \\
&=& \int_{\TT} \frac{1}{\gamma-1}\rho^{\gamma-1}\partial_{x}^{2}\left (\frac{1}{\gamma}\rho^{\gamma}+\frac{1}{2}B^{2}\right ) + B \partial_{x}\left ( \frac{B^{2}+B_{0}^{2}}{\rho}\partial_{x}B+\rho^{\gamma-2}B\partial_{x}\rho\right )\\
&=& \int_{\TT} -\rho^{2\gamma-3}\big(\partial_{x}\rho\big)^{2} - 2\rho^{\gamma-2}B\partial_{x}\rho \partial_{x}B - \frac{B^{2}+B_{0}^{2}}{\rho}\big(\partial_{x}B\big)^{2} \\
&=& -\int_{\TT} \left ( \left (\rho^{\gamma-\frac{3}{2}}\partial_{x}\rho + \frac{B}{\rho^{\frac{1}{2}}}\partial_{x}B\right )^{2} + \left (\frac{B_{0}}{\rho^{\frac{1}{2}}}\partial_{x}B\right )^{2}\right ) \le 0.
\end{IEEEeqnarray*}
Hence the claim.
\end{proof}

\begin{lem}[A-priori weighted $H^{s}$ estimate] \label{20260110lem1}
Assume that $\gamma>1$ and $B_{0}\neq 0$. Let $\rho, B \in C^{\infty}([0, T]\times \TT)$ be smooth solutions to the system \eqref{20260110eq2}. Assume that $\rho \ge \eta$ on $[0, T]\times \TT$ for some positive constant $\eta>0$. Then for any integer $s \ge 3$, one has
\begin{IEEEeqnarray*}{rCl}
\IEEEeqnarraymulticol{3}{l}{\partial_{t} \int_{\TT} \left ( \left | \rho^{\frac{\gamma}{2}-1}\partial_{x}^{s}\rho\right |^{2} +\left |\partial_{x}^{s}B\right |^{2}\right ) } \\
& \le & C_{\gamma, B_{0}, s} \left ( \|\partial_{x}^{s}\rho\|_{L^{2}}+\|\partial_{x}^{s}B\|_{L^{2}}\right )^{2} \left ( \|\partial_{x}^{\lfloor\frac{s+3}{2}\rfloor}\rho\|_{L^{2}} + \|\partial_{x}^{\lfloor\frac{s+3}{2}\rfloor}B\|_{L^{2}}\right )\left ( \eta^{-1}+\|\rho\|_{H^{\lfloor \frac{s+3}{2}\rfloor}}+\|B\|_{H^{\lfloor \frac{s+3}{2}\rfloor}}\right )^{C_{\gamma, s}}. \yesnumber \label{20260111eq7}
\end{IEEEeqnarray*}
\end{lem}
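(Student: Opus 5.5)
We differentiate the system \eqref{20260110eq2} $s$ times in $x$, test the $\rho$-equation against $\rho^{\gamma-2}\partial_{x}^{s}\rho$ and the $B$-equation against $\partial_{x}^{s}B$, and mimic the energy identity of Lemma \ref{20260111lem1} at top order. We first write the equations in quasilinear form:
\[
\partial_{t}\rho=\partial_{x}\big(\rho^{\gamma-1}\partial_{x}\rho+B\partial_{x}B\big),\qquad
\partial_{t}B=\partial_{x}\Big(\rho^{\gamma-2}B\partial_{x}\rho+\tfrac{B^{2}+B_{0}^{2}}{\rho}\partial_{x}B\Big).
\]
Applying $\partial_{x}^{s}$ and integrating by parts once, the leading contribution to the time derivative is $-\int\big(\partial_{x}^{s+1}\rho,\partial_{x}^{s+1}B\big)\,A\,\big(\partial_{x}^{s+1}\rho,\partial_{x}^{s+1}B\big)^{T}$. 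Here the symmetrized matrix is
\[
A=\begin{pmatrix}\rho^{2\gamma-3} & \rho^{\gamma-2}B\\ \rho^{\gamma-2}B & \frac{B^{2}+B_{0}^{2}}{\rho}\end{pmatrix}.
\]
The weight $\rho^{\gamma-2}$ on the $\rho$-component is exactly what symmetrizes the off-diagonal entries: $\rho^{\gamma-2}\cdot B$ from the $\rho$-equation equals the coefficient $\rho^{\gamma-2}B$ from the $B$-equation. Exactly as in Lemma \ref{20260111lem1}, $A$ is a sum of squares:
\[
\big(\rho^{\gamma-\frac32}\partial_{x}^{s+1}\rho+\rho^{-\frac12}B\partial_{x}^{s+1}B\big)^{2}+B_{0}^{2}\rho^{-1}(\partial_{x}^{s+1}B)^{2}.
\]
This gives a good term $D\ge0$. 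We do not need coercivity in $\partial_{x}^{s+1}\rho$ beyond $D$, so we simply keep $-D$ with an adjustable fraction to absorb errors. The time derivative of the weight produces $\int(\gamma-2)\rho^{\gamma-3}\partial_{t}\rho\,|\partial_{x}^{s}\rho|^{2}$. Since $\partial_{t}\rho$ involves two derivatives, we bound it by $\|\rho,B\|_{W^{2,\infty}}$ times powers of $\eta^{-1}$ and $\|\rho,B\|_{L^\infty}$, hence by $H^{3}$ norms. This is compatible with $\lfloor (s+3)/2\rfloor\ge3$.

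The remaining terms are commutators. Let $G$ denote one of the fluxes, e.g. $a(\rho,B)\partial_{x}\rho+b(\rho,B)\partial_{x}B$. In $\partial_{x}^{s}G$, the terms where all $s$ derivatives fall on $\partial_{x}\rho$ or $\partial_{x}B$ are the principal ones treated above. Every other term has the form $\partial_{x}^{k}(a)\,\partial_{x}^{s+1-k}\rho$ with $1\le k\le s$. After the integration by parts these are paired with $\partial_{x}^{s+1}\rho$ or $\partial_{x}^{s+1}B$, together with the weight. We bound such a pairing by Cauchy--Schwarz, using $\epsilon D$ plus $C_{\epsilon}\|\partial_{x}^{k}a\,\partial_{x}^{s+1-k}\rho\|_{L^{2}}^{2}$.

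There is one subtlety here. $D$ controls $\partial_{x}^{s+1}B$ and the combination $\rho^{\gamma-\frac32}\partial_{x}^{s+1}\rho+\rho^{-\frac12}B\partial_{x}^{s+1}B$, so it also controls $\partial_{x}^{s+1}\rho$. This is because $\rho$ lies between $\eta$ and a bound by $\|\rho\|_{H^{1}}$, and $B$ is bounded, so $\partial_{x}^{s+1}\rho$ is recovered by subtracting. The constants are polynomial in $\eta^{-1}$, which is absorbed into the factor $(\eta^{-1}+\cdots)^{C_{\gamma,s}}$ in \eqref{20260111eq7}. Alternatively, for the $k=1$ term we can integrate by parts back instead of absorbing.

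The last step is to estimate $\|\partial_{x}^{k}a\,\partial_{x}^{s+1-k}f\|_{L^{2}}$ by Gagliardo--Nirenberg and Moser-type composition estimates. Write $\partial_{x}^{k}a(\rho,B)$ via Faà di Bruno as sums of products of derivatives of $\rho$ and $B$, with coefficients bounded by powers of $\eta^{-1}$ and $\|\rho,B\|_{L^{\infty}}$ (here $\gamma-2$, $-1$ powers appear). For each product, put the factor carrying the most derivatives, at most $s$ of them, in $L^{2}$, and all others in $L^{\infty}$. The others then carry at most about $(s+1)/2$ derivatives, so $H^{\lfloor (s+3)/2\rfloor}$ controls them via $\|\partial_x^j f\|_{L^\infty}\lesssim\|f\|_{H^{j+1}}$. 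One needs $k\ge1$ to ensure at least one such extra factor, which yields the structure
\[
\big(\|\partial_{x}^{s}\rho\|_{L^{2}}+\|\partial_{x}^{s}B\|_{L^{2}}\big)^{2}\cdot(\text{low norm})\cdot(\text{polynomial}).
\]
Intermediate derivatives $\partial_{x}^{j}$ with $\lfloor (s+3)/2\rfloor<j<s$ are interpolated between $\partial_x^{s}$ and the low norm, using the Poincaré-free version on $\TT$ (they are mean-zero). The main obstacle is bookkeeping. We must make sure the squared $\partial_{x}^{s}$ factor appears with exactly one additional factor of the top-homogeneous low norm $\|\partial_{x}^{\lfloor (s+3)/2\rfloor}(\rho,B)\|_{L^{2}}$. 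The quadratic error from Cauchy--Schwarz naturally gives two low-norm factors, and one of them must be absorbed into the polynomial bracket. We also need that no pure $L^{\infty}$ quantity appears without a derivative, which is guaranteed because every commutator term contains at least one derivative of a coefficient. If that term's $L^{2}$-placed factor has fewer than $s$ derivatives, we use Gagliardo--Nirenberg interpolation on $\TT$ to reach the stated form.
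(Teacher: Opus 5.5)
Your proposal is correct and follows essentially the same route as the paper. Both use the same weighted energy, the same sum-of-squares principal part, Cauchy--Schwarz absorption of the lower-order terms, and the same count that places the factor with the most derivatives in $L^{2}$, bounds the second-largest index by $\lfloor\frac{s+1}{2}\rfloor$, and then applies Sobolev and Poincar\'e, absorbing one of the two low-norm factors into the polynomial bracket. The only cosmetic difference is that you absorb errors through the coercivity of $D$ in $\partial_{x}^{s+1}\rho$ (constants polynomial in $\eta^{-1}$, $\|\rho\|_{L^{\infty}}$, $\|B\|_{L^{\infty}}$, $|B_{0}|^{-1}$), whereas the paper completes the squares explicitly. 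You should also list the term in which the integration by parts puts a derivative on the weight $\rho^{\gamma-2}$; it is handled in exactly the same way.
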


\begin{proof}[Proof of Lemma \ref{20260110lem1}]
Observe that
\begin{IEEEeqnarray*}{rCl}
\IEEEeqnarraymulticol{3}{l}{\partial_{t}\frac{1}{2}\int_{\TT} \left ( \left | \rho^{\frac{\gamma}{2}-1}\partial_{x}^{s}\rho\right |^{2} + \left | \partial_{x}^{s}B \right |^{2} \right ) \dd x} \\
&=& \int_{\TT} \rho^{\gamma-2} \partial_{x}^{s}\rho \partial_{t}\partial_{x}^{s}\rho + \partial_{x}^{s} B \partial_{t} \partial_{x}^{s}B + \frac{\gamma-2}{2}\rho^{\gamma-3}\partial_{t}\rho\left (\partial_{x}^{s}\rho\right )^{2}\dd x \\
&=& \int_{\TT} \rho^{\gamma-2} \partial_{x}^{s}\rho \partial_{x}^{s+2}\left ( \frac{1}{\gamma}\rho^{\gamma}+\frac{1}{2}B^{2} \right ) + \partial_{x}^{s}B \partial_{x}^{s+1}\left ( \frac{B}{\rho} \partial_{x}\left (\frac{1}{\gamma}\rho^{\gamma}+\frac{1}{2}B^{2}\right ) + \frac{B_{0}^{2}}{\rho}\partial_{x}B\right ) \\
&& \quad + \frac{\gamma-2}{2}\rho^{\gamma-3}\partial_{x}^{2}\left ( \frac{1}{\gamma}\rho^{\gamma}+\frac{1}{2}B^{2}\right )\left ( \partial_{x}^{s}\rho\right )^{2} \dd x \\
&=& \int_{\TT} -\rho^{\gamma-2} \partial_{x}^{s+1}\rho \partial_{x}^{s+1}\left ( \frac{1}{\gamma}\rho^{\gamma}+\frac{1}{2}B^{2} \right ) -(\gamma-2)\rho^{\gamma-3}\partial_{x}\rho \partial_{x}^{s}\rho \partial_{x}^{s+1}\left ( \frac{1}{\gamma}\rho^{\gamma}+\frac{1}{2}B^{2}\right ) \\
&& \quad - \partial_{x}^{s+1}B \partial_{x}^{s}\left ( \frac{B}{\rho} \partial_{x}\left (\frac{1}{\gamma}\rho^{\gamma}+\frac{1}{2}B^{2}\right ) + \frac{B_{0}^{2}}{\rho}\partial_{x}B\right ) + \frac{\gamma-2}{2}\rho^{\gamma-3}\partial_{x}^{2}\left ( \frac{1}{\gamma}\rho^{\gamma}+\frac{1}{2}B^{2}\right )\left ( \partial_{x}^{s}\rho\right )^{2} \dd x. \yesnumber \label{20260110eq3}
\end{IEEEeqnarray*}

By Leibniz rule, we have
\begin{IEEEeqnarray*}{rCl}
\IEEEeqnarraymulticol{3}{l}{\partial_{x}^{s+1}\left ( \frac{1}{\gamma}\rho^{\gamma}+\frac{1}{2}B^{2}\right) - \rho^{\gamma-1}\partial_{x}^{s+1}\rho - B\partial_{x}^{s+1}B} \\
&=& \sum_{\substack{k, \ell \ge 0, \\
1 \le i_{1}, \cdots, i_{k} \le s, \\
0 \le j_{1}, \cdots, j_{\ell} \le s, \\
i_{1}+\cdots +i_{k}+j_{1}+\cdots +j_{\ell}=s+1, \\
\alpha+k+\frac{\gamma}{2}\ell = \gamma}}C_{\gamma, s, \alpha, i_{1}, \ldots, i_{k}, j_{1}, \ldots, j_{\ell}}\rho^{\alpha}\partial_{x}^{i_{1}}\rho\cdots \partial_{x}^{i_{k}}\rho\partial_{x}^{j_{1}}B\cdots \partial_{x}^{j_{\ell}}B, \yesnumber \label{20260110eq4} \\
\IEEEeqnarraymulticol{3}{l}{\partial_{x}^{s}\left ( \frac{B}{\rho} \partial_{x}\left (\frac{1}{\gamma}\rho^{\gamma}+\frac{1}{2}B^{2}\right ) + \frac{B_{0}^{2}}{\rho}\partial_{x}B\right ) - \rho^{\gamma-2} B \partial_{x}^{s+1}\rho - \rho^{-1}B^{2}\partial_{x}^{s+1}B - \rho^{-1}B_{0}^{2}\partial_{x}^{s+1}B} \\
&=& \sum_{\substack{k, \ell \ge 0, \\
1 \le i_{1}, \ldots, i_{k} \le s, \\
0 \le j_{1}, \ldots, j_{\ell} \le s, \\
i_{1}+\cdots +i_{k}+j_{1}+\cdots +j_{\ell}=s+1, \\
\alpha+k+\frac{\gamma}{2}\ell = \frac{3}{2}\gamma-1}}C_{\gamma, s, \alpha, i_{1}, \ldots, i_{k}, j_{1}, \ldots, j_{\ell}}\rho^{\alpha}\partial_{x}^{i_{1}}\rho\cdots \partial_{x}^{i_{k}}\rho\partial_{x}^{j_{1}}B\cdots \partial_{x}^{j_{\ell}}B \\
&& + B_{0}^{2}\sum_{\substack{k, \ell \ge 0, \\
1 \le i_{1}, \ldots, i_{k} \le s, \\
0 \le j_{1}, \ldots, j_{\ell} \le s, \\
i_{1}+\cdots + i_{k} + j_{1} + \cdots + j_{\ell}=s+1, \\
\alpha+k+\frac{\gamma}{2}\ell =\frac{\gamma}{2}-1}}C_{\gamma, s, \alpha, i_{1}, \ldots, i_{k}, j_{1}, \ldots, j_{\ell}}\rho^{\alpha}\partial_{x}^{i_{1}}\rho\cdots \partial_{x}^{i_{k}}\rho \partial_{x}^{j_{1}}B\cdots \partial_{x}^{j_{\ell}}B, \yesnumber \label{20260110eq5}
\end{IEEEeqnarray*}
where each summation contains finitely many nonzero terms. Plugging \eqref{20260110eq4} and \eqref{20260110eq5} into \eqref{20260110eq3} gives
\begin{IEEEeqnarray*}{rCl}
\IEEEeqnarraymulticol{3}{l}{\partial_{t}\frac{1}{2}\int_{\TT} \left ( \left | \rho^{\frac{\gamma}{2}-1}\partial_{x}^{s}\rho\right |^{2} + \left | \partial_{x}^{s}B \right |^{2} \right ) \dd x} \\
&=& \int_{\TT} -\rho^{2\gamma-3}\big(\partial_{x}^{s+1}\rho\big)^{2} -2\rho^{\gamma-2}B\partial_{x}^{s+1}\rho\partial_{x}^{s+1}B -\rho^{-1}B^{2}\big(\partial_{x}^{s+1}B\big)^{2} - \rho^{-1}B_{0}^{2}\big(\partial_{x}^{s+1}B\big)^{2} \\
&& + \partial_{x}^{s+1}\rho \sum_{\substack{k, \ell \ge 0, \\
1 \le i_{1}, \cdots, i_{k} \le s, \\
0 \le j_{1}, \cdots, j_{\ell} \le s, \\
i_{1}+\cdots +i_{k}+j_{1}+\cdots +j_{\ell}=s+1, \\
\alpha+k+\frac{\gamma}{2}\ell = 2\gamma-2}}C_{\gamma, s, \alpha, i_{1}, \ldots, i_{k}, j_{1}, \ldots, j_{\ell}}\rho^{\alpha}\partial_{x}^{i_{1}}\rho\cdots \partial_{x}^{i_{k}}\rho\partial_{x}^{j_{1}}B\cdots \partial_{x}^{j_{\ell}}B \\
&& + \partial_{x}^{s+1}B \sum_{\substack{k, \ell \ge 0, \\
1 \le i_{1}, \cdots, i_{k} \le s, \\
0 \le j_{1}, \cdots, j_{\ell} \le s, \\
i_{1}+\cdots +i_{k}+j_{1}+\cdots +j_{\ell}=s+1, \\
\alpha+k+\frac{\gamma}{2}\ell = \frac{3}{2}\gamma-1}}C_{\gamma, s, \alpha, i_{1}, \ldots, i_{k}, j_{1}, \ldots, j_{\ell}}\rho^{\alpha}\partial_{x}^{i_{1}}\rho\cdots \partial_{x}^{i_{k}}\rho\partial_{x}^{j_{1}}B\cdots \partial_{x}^{j_{\ell}}B \\
&& + B_{0}^{2}\partial_{x}^{s+1}B \sum_{\substack{k, \ell \ge 0, \\
1 \le i_{1}, \ldots, i_{k} \le s, \\
0 \le j_{1}, \ldots, j_{\ell} \le s, \\
i_{1}+\cdots + i_{k} + j_{1} + \cdots + j_{\ell}=s+1, \\
\alpha+k+\frac{\gamma}{2}\ell =\frac{\gamma}{2}-1}}C_{\gamma, s, \alpha, i_{1}, \ldots, i_{k}, j_{1}, \ldots, j_{\ell}}\rho^{\alpha}\partial_{x}^{i_{1}}\rho\cdots \partial_{x}^{i_{k}}\rho \partial_{x}^{j_{1}}B\cdots \partial_{x}^{j_{\ell}}B\\
&& + \big (\partial_{x}^{s}\rho\big )^{2}\sum_{\substack{k, \ell \ge 0, \\
1 \le i_{1}, \cdots, i_{k} \le s, \\
0 \le j_{1}, \cdots, j_{\ell} \le s, \\
i_{1}+\cdots +i_{k}+j_{1}+\cdots +j_{\ell}=2, \\
\alpha+k+\frac{\gamma}{2}\ell = 2\gamma-3}}C_{\gamma, s, \alpha, i_{1}, \ldots, i_{k}, j_{1}, \ldots, j_{\ell}}\rho^{\alpha}\partial_{x}^{i_{1}}\rho\cdots \partial_{x}^{i_{k}}\rho\partial_{x}^{j_{1}}B\cdots \partial_{x}^{j_{\ell}}B \\
&=:& \int_{\TT} -\left (\rho^{\gamma-\frac{3}{2}}\partial_{x}^{s+1}\rho + \rho^{-\frac{1}{2}}B\partial_{x}^{s+1}B\right )^{2} - \rho^{-1}B_{0}^{2}\big(\partial_{x}^{s+1}B\big)^{2} \\
&& + \partial_{x}^{s+1}\rho [P_{\rho}] + \partial_{x}^{s+1}B[P_{B}] + B_{0}^{2}\partial_{x}^{s+1}B[P_{B_{0}}] + [P],
\end{IEEEeqnarray*}
where $P_{\rho}, P_{B}, P_{B_{0}}, P$ denote the four summations. Then by Cauchy-Schwarz inequality,
\begin{IEEEeqnarray*}{rCl}
\IEEEeqnarraymulticol{3}{l}{\partial_{t}\frac{1}{2}\int_{\TT} \left ( \left | \rho^{\frac{\gamma}{2}-1}\partial_{x}^{s}\rho\right |^{2} + \left | \partial_{x}^{s}B \right |^{2} \right ) \dd x} \\ 
&=& \int_{\TT} -\left (\rho^{\gamma-\frac{3}{2}}\partial_{x}^{s+1}\rho + \rho^{-\frac{1}{2}}B\partial_{x}^{s+1}B\right )^{2} - \rho^{-1}B_{0}^{2}\big(\partial_{x}^{s+1}B\big)^{2} \\
&& + \left (\rho^{\gamma-\frac{3}{2}}\partial_{x}^{s+1}\rho + \rho^{-\frac{1}{2}}B\partial_{x}^{s+1}B\right ) \left ( \rho^{-\gamma+\frac{3}{2}}P_{\rho}\right ) \\
&& + \rho^{-\frac{1}{2}}B_{0}\partial_{x}^{s+1}B \left ( B_{0}^{-1} \rho^{\frac{1}{2}}P_{B} - B_{0}^{-1}\rho^{-\gamma+\frac{3}{2}}BP_{\rho} + \rho^{\frac{1}{2}}B_{0}P_{B_{0}} \right ) + P \\
& \le & \int_{\TT} \frac{1}{4}\rho^{-2\gamma+3}P_{\rho}^{2} + B_{0}^{-2}\rho P_{B}^{2} + B_{0}^{-2}\rho^{-2\gamma+3}B^{2}P_{\rho}^{2} + B_{0}^{2}\rho P_{B_{0}}^{2} + P \\
& \le & \int_{\TT} \sum_{\substack{k, \ell \ge 0, \\
1 \le i_{1}, \cdots, i_{k} \le s, \\
0 \le j_{1}, \cdots, j_{\ell} \le s, \\
i_{1}+\cdots +i_{k}+j_{1}+\cdots +j_{\ell}=s+1, \\
\frac{1}{2}\alpha+k+\frac{\gamma}{2}\ell = \gamma-\frac{1}{2}}}C_{\gamma, s, \alpha, i_{1}, \ldots, i_{k}, j_{1}, \ldots, j_{\ell}}\rho^{\alpha}\big (\partial_{x}^{i_{1}}\rho\big )^{2}\cdots \big (\partial_{x}^{i_{k}}\rho\big )^{2}\big (\partial_{x}^{j_{1}}B\big )^{2}\cdots \big (\partial_{x}^{j_{\ell}}B\big )^{2} \\
&& + B_{0}^{-2}\sum_{\substack{k, \ell \ge 0, \\
1 \le i_{1}, \cdots, i_{k} \le s, \\
0 \le j_{1}, \cdots, j_{\ell} \le s, \\
i_{1}+\cdots +i_{k}+j_{1}+\cdots +j_{\ell}=s+1, \\
\frac{1}{2}\alpha+k+\frac{\gamma}{2}\ell = \frac{3}{2}\gamma-\frac{1}{2}}}C_{\gamma, s, \alpha, i_{1}, \ldots, i_{k}, j_{1}, \ldots, j_{\ell}}\rho^{\alpha}\big (\partial_{x}^{i_{1}}\rho\big )^{2}\cdots \big (\partial_{x}^{i_{k}}\rho\big )^{2}\big (\partial_{x}^{j_{1}}B\big )^{2}\cdots \big (\partial_{x}^{j_{\ell}}B\big )^{2} \\
&& + B_{0}^{2}\sum_{\substack{k, \ell \ge 0, \\
1 \le i_{1}, \cdots, i_{k} \le s, \\
0 \le j_{1}, \cdots, j_{\ell} \le s, \\
i_{1}+\cdots +i_{k}+j_{1}+\cdots +j_{\ell}=s+1, \\
\frac{1}{2}\alpha+k+\frac{\gamma}{2}\ell = \frac{1}{2}\gamma-\frac{1}{2}}}C_{\gamma, s, \alpha, i_{1}, \ldots, i_{k}, j_{1}, \ldots, j_{\ell}}\rho^{\alpha}\big (\partial_{x}^{i_{1}}\rho\big )^{2}\cdots \big (\partial_{x}^{i_{k}}\rho\big )^{2}\big (\partial_{x}^{j_{1}}B\big )^{2}\cdots \big (\partial_{x}^{j_{\ell}}B\big )^{2} \\
&& + \big (\partial_{x}^{s}\rho\big )^{2}\sum_{\substack{k, \ell \ge 0, \\
1 \le i_{1}, \cdots, i_{k} \le s, \\
0 \le j_{1}, \cdots, j_{\ell} \le s, \\
i_{1}+\cdots +i_{k}+j_{1}+\cdots +j_{\ell}=2, \\
\alpha+k+\frac{\gamma}{2}\ell = 2\gamma-3}}C_{\gamma, s, \alpha, i_{1}, \ldots, i_{k}, j_{1}, \ldots, j_{\ell}}\rho^{\alpha}\partial_{x}^{i_{1}}\rho\cdots \partial_{x}^{i_{k}}\rho\partial_{x}^{j_{1}}B\cdots \partial_{x}^{j_{\ell}}B. \yesnumber \label{20260110eq7}
\end{IEEEeqnarray*}

Now consider any term of the form \begin{eqnarray}\label{20260110eq6}
\int_{\TT}\rho^{\alpha}\big (\partial_{x}^{i_{1}}\rho\big )^{2}\cdots \big (\partial_{x}^{i_{k}}\rho\big )^{2}\big ( \partial_{x}^{j_{1}}B\big )^{2} \cdots \big (\partial_{x}^{j_{\ell}}B\big )^{2}
\end{eqnarray} satisfying \[
k, \ell \ge 0, \quad 1 \le i_{1}, \ldots, i_{k} \le s, \quad 0 \le j_{1}, \ldots, j_{\ell} \le s, \quad i_{1} + \cdots + i_{k} + j_{1} + \cdots + j_{\ell} = s+1.
\] Without loss of generality, we may assume that $\max\{i_{1}, j_{1}\}$ is the largest among $i_{1}, \ldots, i_{k}, j_{1}, \ldots, j_{\ell}$. Assume for the moment that $i_{1}$ is the largest. Note that $k + \ell \ge 2$, so we may choose the second largest number $m$ among $i_{1}, \ldots, i_{k}, j_{1}, \ldots, j_{\ell}$ (If the largest number appears more than once, then $m=i_{1}$.) Then the integral \eqref{20260110eq6} can be bounded by
\begin{IEEEeqnarray*}{rCl}
\IEEEeqnarraymulticol{3}{l}{\left |\int_{\TT} \rho^{\alpha}\big (\partial_{x}^{i_{1}}\rho\big )^{2}\cdots \big (\partial_{x}^{i_{k}}\rho\big )^{2}\big ( \partial_{x}^{j_{1}}B\big )^{2} \cdots \big (\partial_{x}^{j_{\ell}}B\big )^{2}\right |} \\
&\le& \|\partial_{x}^{i_{1}}\rho\|_{L^{2}}^{2} \|\rho^{\alpha}\|_{L^{\infty}}\|\partial_{x}^{i_{2}}\rho\|_{L^{\infty}}^{2}\cdots \|\partial_{x}^{i_{k}}\rho\|_{L^{\infty}}^{2}\|\partial_{x}^{j_{1}}B\|_{L^{\infty}}^{2}\cdots \|\partial_{x}^{j_{\ell}}B\|_{L^{\infty}}^{2} \\
&\le& \|\partial_{x}^{i_{1}}\rho\|_{L^{2}}^{2} \left ( \eta^{-1} + \|\rho\|_{L^{\infty}}\right )^{|\alpha|} \left ( \|\partial_{x}^{m}\rho\|_{L^{\infty}}+\|\partial_{x}^{m}B\|_{L^{\infty}}\right )^{2}\left ( \|\rho\|_{W^{m, \infty}} + \|B\|_{W^{m, \infty}}\right )^{2k+2\ell-4} \\
& \le & \|\partial_{x}^{i_{1}}\rho\|_{L^{2}}^{2} \left (\|\partial_{x}^{m}\rho\|_{L^{\infty}} + \|\partial_{x}^{m}B\|_{L^{\infty}}\right )^{2} \left ( \eta^{-1}+\|\rho\|_{W^{m, \infty}} + \|B\|_{W^{m, \infty}}\right )^{C_{\gamma, s}} \\
& \le & C_{ \gamma, s} \|\partial_{x}^{i_{1}}\rho\|_{L^{2}}^{2}\left (\|\partial_{x}^{m+1}\rho\|_{L^{2}} + \|\partial_{x}^{m+1}B\|_{L^{2}}\right )^{2}\left (\eta^{-1}+\|\rho\|_{H^{m+1}}+\|B\|_{H^{m+1}}\right )^{C_{\gamma, s}}.
\end{IEEEeqnarray*}
Note that in the last line we have used Sobolev inequalities \[
\|f\|_{W^{m, \infty}(\TT)} \le C \|f\|_{H^{m+1}(\TT)}, \quad \|\partial_{x}^{m}f\|_{L^{\infty}} \le C\|\partial_{x}^{m+1}f\|_{L^{2}}. \quad (m \ge 1)
\] If $j_{1}$ is the largest instead of $i_{1}$, then we would obtain the same inequality with $\|\partial_{x}^{i_{1}}\rho\|_{L^{2}}$ replaced by $\|\partial_{x}^{j_{1}}B\|_{L^{2}}$. Hence, if $M, m$ are the largest and second largest number among $i_{1}, \ldots, i_{k}, j_{1}, \ldots, j_{\ell}$, then we have \begin{IEEEeqnarray*}{rCl}
\IEEEeqnarraymulticol{3}{l}{\left |\int_{\TT} \rho^{\alpha}\big (\partial_{x}^{i_{1}}\rho\big )^{2}\cdots \big (\partial_{x}^{i_{k}}\rho\big )^{2}\big ( \partial_{x}^{j_{1}}B\big )^{2} \cdots \big (\partial_{x}^{j_{\ell}}B\big )^{2}\right |} \\
& \le& C_{\gamma, s} \left ( \|\partial_{x}^{M}\rho\|_{L^{2}}+\|\partial_{x}^{M}B\|_{L^{2}}\right )^{2}\left ( \|\partial_{x}^{m+1}\rho\|_{L^{2}} + \|\partial_{x}^{m+1}B\|_{L^{2}} \right )^{2}\left (\eta^{-1}+ \|\rho\|_{H^{m+1}}+\|B\|_{H^{m+1}}\right )^{C_{\gamma, s}}. \yesnumber \label{20260105eq1}
\end{IEEEeqnarray*} By assumption $i_{1}, \ldots, i_{k}, j_{1}, \ldots, j_{\ell} \le s$, we have $M \le s$. Also, $i_{1}+\cdots +i_{k}+j_{1}+\cdots +j_{\ell}=s+1>0$ implies that $M \ge 1$. Hence by Poincar\'e's inequality, we have \[
\|\partial_{x}^{M}\rho\|_{L^{2}} \le C_{s} \|\partial_{x}^{s}\rho\|_{L^{2}}.
\] Moreover, since \[
2m \le m+M \le i_{1}+\cdots +i_{k}+j_{1}+\cdots +j_{\ell} = s+1,
\] we have $m \le \lfloor \frac{s+1}{2}\rfloor$. Thus, we have \[
\|\partial_{x}^{m+1}\rho\|_{L^{2}} \le C_{s} \|\partial_{x}^{\lfloor\frac{s+3}{2}\rfloor}\rho\|_{L^{2}}.
\] Using these inequalities for \eqref{20260105eq1} gives
\begin{IEEEeqnarray*}{rCl}
\IEEEeqnarraymulticol{3}{l}{\left |\int_{\TT} \rho^{\alpha}\big (\partial_{x}^{i_{1}}\rho\big )^{2}\cdots \big (\partial_{x}^{i_{k}}\rho\big )^{2}\big ( \partial_{x}^{j_{1}}B\big )^{2} \cdots \big (\partial_{x}^{j_{\ell}}B\big )^{2}\right |} \\
&\le& C_{\gamma, s} \left ( \|\partial_{x}^{s}\rho\|_{L^{2}}+\|\partial_{x}^{s}B\|_{L^{2}}\right )^{2}\left ( \|\partial_{x}^{\lfloor\frac{s+3}{2}\rfloor}\rho\|_{L^{2}} + \|\partial_{x}^{\lfloor\frac{s+3}{2}\rfloor}B\|_{L^{2}}\right )^{2} \left ( \eta^{-1}+\|\rho\|_{H^{\lfloor \frac{s+3}{2}\rfloor}}+\|B\|_{H^{\lfloor \frac{s+3}{2}\rfloor}}\right )^{C_{\gamma, s}}. \yesnumber \label{20260110eq8}
\end{IEEEeqnarray*} Similar argument gives \begin{IEEEeqnarray*}{rCl}
\IEEEeqnarraymulticol{3}{l}{\left | \int_{\TT} \big(\partial_{x}^{s}\rho\big)^{2} \rho^{\alpha}\partial_{x}^{i_{1}}\rho\cdots \partial_{x}^{i_{k}}\rho\partial_{x}^{j_{1}}B\cdots \partial_{x}^{j_{\ell}}B \right |} \\
&\le& C_{\gamma, s} \|\partial_{x}^{s}\rho\|_{L^{2}}^{2}\left ( \|\partial_{x}^{\lfloor\frac{s+3}{2}\rfloor}\rho\|_{L^{2}} + \|\partial_{x}^{\lfloor\frac{s+3}{2}\rfloor}B\|_{L^{2}}\right ) \left (\eta^{-1}+ \|\rho\|_{H^{\lfloor \frac{s+3}{2}\rfloor}}+\|B\|_{H^{\lfloor \frac{s+3}{2}\rfloor}}\right )^{C_{\gamma, s}}. \yesnumber \label{20260110eq9}
\end{IEEEeqnarray*} (The assumption $s \ge 3$ is used here to guarantee $\lfloor\frac{s+3}{2}\rfloor \ge 3$.) Now the desired result \eqref{20260111eq7} follows by applying \eqref{20260110eq8} and \eqref{20260110eq9} to \eqref{20260110eq7}.
\end{proof}

\begin{cor}[A-priori $H^{s}$ estimate]
\label{20260110cor1}
Assume that $\gamma>1$ and $B_{0}\neq 0$. Let $\rho, B \in C^{\infty}([0, T]\times \TT)$ be smooth solutions to the system \eqref{20260110eq2}. Assume that $\rho \ge \eta$ on $[0, T]\times \TT$ for some positive constant $\eta>0$. Then for any integer $s \ge 3$, one has
{\small\begin{IEEEeqnarray*}{rCl}
\IEEEeqnarraymulticol{3}{l}{\left (\|\partial_{x}^{s}\rho(t)\|_{L^{2}}^{2} + \|\partial_{x}^{s}B(t)\|_{L^{2}}^{2}\right )} \\
&\le& \left (\|\partial_{x}^{s}\rho_{t=0}\|_{L^{2}}^{2} + \|\partial_{x}^{s}B_{t=0}\|_{L^{2}}^{2}\right ) \left ( \eta^{-1} + \|\rho_{t=0}\|_{H^{\lfloor\frac{s+3}{2}\rfloor}} + \|\rho(t)\|_{H^{\lfloor\frac{s+3}{2}\rfloor}}\right )^{C_{\gamma}} \\
&& \exp \left ( C_{\gamma, B_{0}, s} \int_{0}^{t}  \left ( \|\partial_{x}^{\lfloor\frac{s+3}{2}\rfloor}\rho(\tau)\|_{L^{2}} + \|\partial_{x}^{\lfloor\frac{s+3}{2}\rfloor}B(\tau)\|_{L^{2}}\right ) \left ( \eta^{-1}+\|\rho(\tau)\|_{H^{\lfloor \frac{s+3}{2}\rfloor}}+\|B(\tau)\|_{H^{\lfloor \frac{s+3}{2}\rfloor}}\right )^{C_{\gamma, s}}\dd\tau \right ).
\end{IEEEeqnarray*}}
\end{cor}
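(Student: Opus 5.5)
The natural route is Grönwall's inequality applied to the weighted quantity of Lemma \ref{20260110lem1}, followed by a conversion between the weighted and unweighted $L^{2}$ norms of $\partial_{x}^{s}\rho$. Set
\[
E(t) := \int_{\TT} \left ( \left | \rho^{\frac{\gamma}{2}-1}\partial_{x}^{s}\rho\right |^{2} + \left |\partial_{x}^{s}B\right |^{2}\right )(t)\dd x .
\]
The first step is to compare $E(t)$ with $Y(t) := \|\partial_{x}^{s}\rho(t)\|_{L^{2}}^{2}+\|\partial_{x}^{s}B(t)\|_{L^{2}}^{2}$. The weight $\rho^{\gamma-2}$ is bounded above and below in terms of $\eta^{-1}$ and $\|\rho\|_{L^{\infty}}$. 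Since $\|\rho\|_{L^{\infty}} \le C\|\rho\|_{H^{1}} \le C\|\rho\|_{H^{\lfloor\frac{s+3}{2}\rfloor}}$, this gives
\[
\left(\eta^{-1}+\|\rho(t)\|_{H^{\lfloor\frac{s+3}{2}\rfloor}}\right)^{-C_{\gamma}} Y(t) \le E(t) \le \left(\eta^{-1}+\|\rho(t)\|_{H^{\lfloor\frac{s+3}{2}\rfloor}}\right)^{C_{\gamma}} Y(t).
\]
Either sign of $\gamma-2$ is covered, because we bound $\rho^{\gamma-2}$ by $\max(\eta^{-1},\|\rho\|_{L^\infty})^{|\gamma-2|}$ and its reciprocal similarly. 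Any constant such as $1$ can be absorbed, since $\eta^{-1}+\|\rho\|_{H^{1}}$ is bounded below by a positive constant depending only on the mean of $\rho \ge \eta$; if needed, we enlarge the base by adding $1$.

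The second step is to feed this into Lemma \ref{20260110lem1}. The right-hand side of \eqref{20260111eq7} is $Y(\tau)\, A(\tau)$, where
\[
A(\tau) := C_{\gamma,B_{0},s}\left ( \|\partial_{x}^{\lfloor\frac{s+3}{2}\rfloor}\rho\|_{L^{2}} + \|\partial_{x}^{\lfloor\frac{s+3}{2}\rfloor}B\|_{L^{2}}\right )\left ( \eta^{-1}+\|\rho\|_{H^{\lfloor \frac{s+3}{2}\rfloor}}+\|B\|_{H^{\lfloor \frac{s+3}{2}\rfloor}}\right )^{C_{\gamma, s}} .
\]
Bounding $Y(\tau)$ by $(\eta^{-1}+\|\rho(\tau)\|_{H^{\lfloor\frac{s+3}{2}\rfloor}})^{C_\gamma}E(\tau)$ and absorbing this extra power into the exponent $C_{\gamma,s}$ of $A$ (after enlarging $C_{\gamma,s}$) yields $E'(\tau) \le \tilde A(\tau) E(\tau)$, with $\tilde A$ of the same form as $A$. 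Grönwall then gives
\[
E(t) \le E(0)\exp\left(\int_{0}^{t}\tilde A\right).
\]

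The final step applies the comparison twice: $Y(t) \le (\eta^{-1}+\|\rho(t)\|_{H^{\cdots}})^{C_\gamma}E(t)$ and $E(0)\le (\eta^{-1}+\|\rho_{t=0}\|_{H^{\cdots}})^{C_\gamma}Y(0)$. The product of these two factors is dominated by $(\eta^{-1}+\|\rho_{t=0}\|+\|\rho(t)\|)^{2C_\gamma}$, which is exactly the prefactor in the statement after renaming $C_\gamma$. Note that the hypothesis $\rho\ge\eta$ on all of $[0,T]$ also covers $t=0$.

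The only genuinely delicate point is this bookkeeping of the weight $\rho^{\gamma-2}$. The exponent must depend only on $\gamma$ in the prefactor, which it does, since it is $|\gamma-2|$ times a fixed number. Inside the integral, the extra powers must be absorbed into $C_{\gamma,s}$. Everything else is a direct application of Lemma \ref{20260110lem1} and Grönwall's inequality.
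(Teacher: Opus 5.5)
Your proposal is correct and follows the paper's proof essentially step for step. The paper also uses the comparison $\|\partial_x^s\rho\|_{L^2}\le\|\rho^{\frac{\gamma}{2}-1}\partial_x^s\rho\|_{L^2}(\eta^{-1}+\|\rho\|_{H^{\lfloor\frac{s+3}{2}\rfloor}})^{C_\gamma}$ to turn the right-hand side of Lemma~\ref{20260110lem1} into a multiple of the weighted energy, applies Gr\"onwall, and then uses the comparison again at times $0$ and $t$; your remark on absorbing multiplicative constants into the exponent (using $\rho\ge\eta$ to bound the base from below) makes explicit a point the paper leaves implicit.
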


\begin{proof}[Proof of Corollary \ref{20260110cor1}]
Note that \begin{eqnarray}\label{20260110eq10}
\|\partial_{x}^{s}\rho\|_{L^{2}} \le \|\rho^{\frac{\gamma}{2}-1}\partial_{x}^{s}\rho\|_{L^{2}}\left ( \|\rho\|_{L^{\infty}} + \|\rho^{-1}\|_{L^{\infty}}\right )^{\left |\frac{\gamma}{2}-1\right |} \le \|\rho^{\frac{\gamma}{2}-1}\partial_{x}^{s}\rho\|_{L^{2}}\left ( \eta^{-1}+\|\rho\|_{H^{\lfloor\frac{s+3}{2}\rfloor}}\right )^{C_{\gamma}}.
\end{eqnarray} Together with Lemma \ref{20260110lem1}, we obtain {\small \begin{IEEEeqnarray*}{rCl}
\IEEEeqnarraymulticol{3}{l}{ \partial_{t} \int_{\TT} \left ( \left | \rho^{\frac{\gamma}{2}-1}\partial_{x}^{s}\rho\right |^{2} +\left |\partial_{x}^{s}B\right |^{2}\right ) } \\
& \le & C_{\gamma, B_{0}, s} \left ( \|\partial_{x}^{\lfloor\frac{s+3}{2}\rfloor}\rho\|_{L^{2}} + \|\partial_{x}^{\lfloor\frac{s+3}{2}\rfloor}B\|_{L^{2}}\right )\left ( \eta^{-1}+\|\rho\|_{H^{\lfloor \frac{s+3}{2}\rfloor}}+\|B\|_{H^{\lfloor \frac{s+3}{2}\rfloor}}\right )^{C_{\gamma, s}} \int_{\TT} \left ( \left | \rho^{\frac{\gamma}{2}-1}\partial_{x}^{s}\rho\right |^{2} +\left |\partial_{x}^{s}B\right |^{2}\right ). \yesnumber \label{20260111eq1}
\end{IEEEeqnarray*}} Hence by Gr\"onwall's inequality, we conclude that
{\small \begin{IEEEeqnarray*}{rCl}
\IEEEeqnarraymulticol{3}{l}{\left (\|\rho(t)^{\frac{\gamma}{2}-1}\partial_{x}^{s}\rho(t)\|_{L^{2}}^{2} + \|\partial_{x}^{s}B(t)\|_{L^{2}}^{2}\right )} \\
&\le& \left (\|\rho_{t=0}^{\frac{\gamma}{2}-1}\partial_{x}^{s}\rho_{t=0}\|_{L^{2}}^{2} + \|\partial_{x}^{s}B_{t=0}\|_{L^{2}}^{2}\right ) \\
&& \exp \left ( C_{\gamma, B_{0}, s} \int_{0}^{t}  \left ( \|\partial_{x}^{\lfloor\frac{s+3}{2}\rfloor}\rho(\tau)\|_{L^{2}} + \|\partial_{x}^{\lfloor\frac{s+3}{2}\rfloor}B(\tau)\|_{L^{2}}\right ) \left ( \eta^{-1}+\|\rho(\tau)\|_{H^{\lfloor \frac{s+3}{2}\rfloor}}+\|B(\tau)\|_{H^{\lfloor \frac{s+3}{2}\rfloor}}\right )^{C_{\gamma, s}}\dd\tau \right ).
\end{IEEEeqnarray*}}
Using \eqref{20260110eq10} once again finishes the proof.
\end{proof}

\subsection{Boundedness of $\rho, \rho^{-1}$, and $B$}\label{subsec2.2}

In this section, assume $1< \gamma<2$, and $B_{0} \neq 0$. 

\subsubsection{Definition of new variables $W$ and $Z$} \label{subsubsec2.2.1}

Define \begin{IEEEeqnarray*}{rCl}
f(\rho, B)&:=& \begin{cases}
\frac{\left (B^{2}+B_{0}^{2}-\rho^{\gamma}\right )+\sqrt{(B^{2}+B_{0}^{2}-\rho^{\gamma})^{2}+4B^{2}\rho^{\gamma}}}{B^{2}} & B \neq 0, \\
\infty & B=0, \rho^{\gamma} \le B_{0}^{2}, \\
\frac{2\rho^{\gamma}}{\rho^{\gamma}-B_{0}^{2}} & B=0, \rho^{\gamma} > B_{0}^{2}
\end{cases} \\
&=& \begin{cases}
\infty & B=0, \rho^{\gamma}\le B_{0}^{2}, \\
\frac{4\rho^{\gamma}}{-\big(B^{2}+B_{0}^{2}-\rho^{\gamma}\big)+\sqrt{\big(B^{2}+B_{0}^{2}-\rho^{\gamma}\big)^{2}+4B^{2}\rho^{\gamma}}}& \text{otherwise.}
\end{cases}
\end{IEEEeqnarray*} and consider \[
w = w(\rho, B) := \begin{cases}
(B^{2}+B_{0}^{2}) f^{\frac{2}{2-\gamma}} - \int_{f}^{4}\frac{2B_{0}^{2}}{2-\gamma}\frac{1-\frac{1}{4}s}{\left(1-\frac{1}{2}s\right )^{2}}s^{\frac{2}{2-\gamma}}\dd s & f < \infty, \\
\infty & f = \infty.
\end{cases}
\] 

Similarly, we define \begin{IEEEeqnarray*}{rCl}
g(\rho, B)&:=& \begin{cases}
\frac{-\left (B^{2}+B_{0}^{2}-\rho^{\gamma}\right )+\sqrt{(B^{2}+B_{0}^{2}-\rho^{\gamma})^{2}+4B^{2}\rho^{\gamma}}}{B^{2}} & B \neq 0, \\
\frac{2\rho^{\gamma}}{B_{0}^{2}-\rho^{\gamma}} & B=0, \rho^{\gamma} < B_{0}^{2}, \\
\infty & B=0, \rho^{\gamma} \ge B_{0}^{2}
\end{cases} \\
&=& \begin{cases}
\infty & B=0, \rho^{\gamma}\ge B_{0}^{2}, \\
\frac{4\rho^{\gamma}}{\big(B^{2}+B_{0}^{2}-\rho^{\gamma}\big)+\sqrt{\big(B^{2}+B_{0}^{2}-\rho^{\gamma}\big)^{2}+4B^{2}\rho^{\gamma}}}& \text{otherwise.}
\end{cases}
\end{IEEEeqnarray*} and \[
z = z(\rho, B) = \begin{cases}
(B^{2}+B_{0}^{2}) g^{\frac{2}{2-\gamma}} - \int_{0}^{g}\frac{2B_{0}^{2}}{2-\gamma}\frac{1+\frac{1}{4}s}{\left(1+\frac{1}{2}s\right )^{2}}s^{\frac{2}{2-\gamma}}\dd s & g < \infty, \\
\infty & g = \infty.
\end{cases}
\] 

It is easy to see that $f>2$ and $g>0$. So, the integral appearing in the definition of $w$ and $z$, \[
\int_{f}^{4}\frac{2B_{0}^{2}}{2-\gamma}\frac{1-\frac{1}{4}s}{\left (1-\frac{1}{2}s\right )^{2}}s^{\frac{2}{2-\gamma}}\dd s \quad \text{and }\int_{0}^{g}\frac{2B_{0}^{2}}{2-\gamma}\frac{1+\frac{1}{4}s}{\left (1+\frac{1}{2}s\right )^{2}}s^{\frac{2}{2-\gamma}}\dd s,
\] are well-defined for $f<\infty$ and $g<\infty$, respectively. Define also
\begin{IEEEeqnarray*}{rCl}
\alpha(\rho, B)&:=& \frac{B_{0}^{2}}{\rho}\frac{f}{f-2} = \frac{B^{2}+B_{0}^{2}}{\rho}+\frac{2\rho^{\gamma-1}}{f} = \rho^{\gamma-1}+\frac{B^{2}f}{2\rho} \quad (\text{last equality holds if }f<\infty), \\
\beta(\rho, B)&:=& \frac{B_{0}^{2}}{\rho}\frac{g}{g+2} = \frac{B^{2}+B_{0}^{2}}{\rho}-\frac{2\rho^{\gamma-1}}{g} = \rho^{\gamma-1}-\frac{B^{2}g}{2\rho} \quad (\text{last equality holds if }g<\infty).
\end{IEEEeqnarray*}
Since $f>2$ and $g>0$, $\alpha$ and $\beta$ are positive.

Finally define \[
W(\rho, B) := \exp(-w(\rho, B)), \quad Z(\rho, B):=\exp(-z(\rho, B)).
\] One can easily check that $z >0$, so that $Z < 1$.

\subsubsection{Equations satisfied by $W$ and $Z$} \label{subsubsec2.2.2}

From the definition of $W$ and $Z$, one can prove that the map $(\rho, B) \mapsto (W(\rho, B), Z(\rho, B))$ is $C^{2}$. The verification of this fact is deferred to Appendix \ref{appendixA}, where we also provide the computation of their derivatives. (See equations \eqref{20260117eq11} - \eqref{20260117eq28}) Using these computed derivatives, we can show that for $(\rho, B)$ with $f(\rho, B)<\infty$, it holds that {\small
\begin{IEEEeqnarray*}{rCl}
\IEEEeqnarraymulticol{3}{l}{\partial_{t}W(\rho, B) - \alpha(\rho, B)\partial_{x}^{2}W(\rho, B)} \\
&=& \partial_{\rho}W\partial_{x}^{2}\left ( \frac{1}{\gamma}\rho^{\gamma}+\frac{1}{2}B^{2}\right ) + \partial_{B}W \partial_{x}\left ( \frac{B}{\rho}(\rho^{\gamma-1}\partial_{x}\rho+B\partial_{x}B)+\frac{B_{0}^{2}}{\rho}\partial_{x}B\right ) \\
&& - \alpha \left ( \partial_{\rho\rho}W\big (\partial_{x}\rho\big)^{2} + 2\partial_{\rho B}W \partial_{x}\rho\partial_{x}B + \partial_{BB}W\big(\partial_{x}B\big)^{2}+ \partial_{\rho}W\partial_{x}^{2}\rho+\partial_{B}W\partial_{x}^{2}B\right ) \\
&=& \partial_{\rho}W\left ( \rho^{\gamma-1}\partial_{x}^{2}\rho+(\gamma-1)\rho^{\gamma-2}\big(\partial_{x}\rho\big)^{2}+B\partial_{x}^{2}B+\big(\partial_{x}B\big)^{2}\right ) \\
&& + \partial_{B}W \bigg (  \rho^{\gamma-2}B\partial_{x}^{2}\rho + \frac{B^{2}+B_{0}^{2}}{\rho}\partial_{x}^{2}B + \rho^{\gamma-2}\partial_{x}\rho\partial_{x}B + (\gamma-2)\rho^{\gamma-3}B\big(\partial_{x}\rho\big)^{2}-\frac{B^{2}+B_{0}^{2}}{\rho^{2}}\partial_{x}\rho\partial_{x}B + \frac{2B}{\rho}\big(\partial_{x}B\big)^{2} \bigg ) \\
&& - \alpha \left ( \partial_{\rho\rho}W\big (\partial_{x}\rho\big)^{2} + 2\partial_{\rho B}W \partial_{x}\rho\partial_{x}B + \partial_{BB}W\big(\partial_{x}B\big)^{2}+ \partial_{\rho}W\partial_{x}^{2}\rho+\partial_{B}W\partial_{x}^{2}B\right ) \\
&=& \partial_{x}^{2}\rho\underbrace{\left ( \rho^{\gamma-1}\partial_{\rho}W + \rho^{\gamma-2}B\partial_{B}W - \alpha \partial_{\rho}W\right )}_{=0} \\
&& + \partial_{x}^{2}B\underbrace{\left ( B\partial_{\rho}W + \frac{B^{2}+B_{0}^{2}}{\rho}\partial_{B}W - \alpha \partial_{B}W\right )}_{=0} \\
&& + \big(\partial_{x}\rho\big)^{2}\left ( (\gamma-1)\rho^{\gamma-2}\partial_{\rho}W + (\gamma-2)\rho^{\gamma-3}B\partial_{B}W - \alpha \partial_{\rho\rho}W\right ) \\
&& + \partial_{x}\rho\partial_{x}B\left ( \rho^{\gamma-2}\partial_{B}W-\frac{B^{2}+B_{0}^{2}}{\rho^{2}}\partial_{B}W - 2\alpha \partial_{\rho B}W\right ) \\
&& + \big(\partial_{x}B\big)^{2}\left ( \partial_{\rho}W + \frac{2B}{\rho}\partial_{B}W - \alpha \partial_{BB}W\right ) \\
&=& W \big(\partial_{x}\rho\big)^{2}\bigg ( (\gamma-1)\frac{4\gamma}{2-\gamma}\rho^{2\gamma-3}f^{\frac{\gamma}{2-\gamma}} - 2\gamma \rho^{\gamma-3}B^{2}f^{\frac{2}{2-\gamma}} - \alpha\frac{4\gamma}{2-\gamma}(\gamma-1)\rho^{\gamma-2}f^{\frac{\gamma}{2-\gamma}}-\alpha \frac{4\gamma^{2}}{(2-\gamma)^{2}}\rho^{\gamma-1}f^{\frac{2\gamma-2}{2-\gamma}}\partial_{\rho}f\bigg ) \\
&& + W \big(\partial_{x}\rho\big)\big(\partial_{x}B\big) \left ( \frac{2\gamma}{2-\gamma}\rho^{\gamma-2}Bf^{\frac{2}{2-\gamma}} - \frac{2\gamma}{2-\gamma}\frac{B^{2}+B_{0}^{2}}{\rho^{2}}B f^{\frac{2}{2-\gamma}} - \alpha \frac{8\gamma^{2}}{(2-\gamma)^{2}} \rho^{\gamma-1}f^{\frac{2\gamma-2}{2-\gamma}}\partial_{B}f\right ) \\
&& + W \big(\partial_{x}B\big)^{2} \left ( \frac{4\gamma}{2-\gamma} \rho^{\gamma-1}f^{\frac{\gamma}{2-\gamma}} + \frac{4\gamma}{2-\gamma}\frac{B^{2}}{\rho}f^{\frac{2}{2-\gamma}} - \alpha \frac{2\gamma}{2-\gamma}f^{\frac{2}{2-\gamma}} - \alpha \frac{4\gamma}{(2-\gamma)^{2}}Bf^{\frac{\gamma}{2-\gamma}}\partial_{B}f\right ) \\
&& - \alpha W\left ( \frac{4\gamma}{2-\gamma}\rho^{\gamma-1}f^{\frac{\gamma}{2-\gamma}}\partial_{x}\rho + \frac{2\gamma}{2-\gamma}Bf^{\frac{2}{2-\gamma}}\partial_{x}B\right )^{2} \\
&=& W \big(\partial_{x}\rho\big)^{2}\left ( - \frac{2\gamma}{2-\gamma} \rho^{\gamma-3}B^{2}f^{\frac{2}{2-\gamma}} -\alpha \frac{4\gamma^{2}}{(2-\gamma)^{2}}\rho^{\gamma-1}f^{\frac{2\gamma-2}{2-\gamma}}\partial_{\rho}f\right ) \\
&& + W \big(\partial_{x}\rho\big)\big(\partial_{x}B\big) \left ( \frac{2\gamma}{2-\gamma}\rho^{\gamma-2}Bf^{\frac{2}{2-\gamma}} - \frac{2\gamma}{2-\gamma}\frac{B^{2}+B_{0}^{2}}{\rho^{2}}B f^{\frac{2}{2-\gamma}} - \alpha \frac{8\gamma}{(2-\gamma)^{2}} B f^{\frac{\gamma}{2-\gamma}}\partial_{\rho}f\right ) \\
&& + W \big(\partial_{x}B\big)^{2} \left (  \frac{2\gamma}{2-\gamma}\frac{B^{2}-B_{0}^{2}}{\rho}f^{\frac{2}{2-\gamma}} - \alpha \frac{4}{(2-\gamma)^{2}}\frac{B^{2}}{\rho^{\gamma-1}}f^{\frac{2}{2-\gamma}}\partial_{\rho}f\right ) \\
&& - \alpha W\left ( \frac{4\gamma}{2-\gamma}\rho^{\gamma-1}f^{\frac{\gamma}{2-\gamma}}\partial_{x}\rho + \frac{2\gamma}{2-\gamma}Bf^{\frac{2}{2-\gamma}}\partial_{x}B\right )^{2} \\
&=& W\underbrace{\left ( -\frac{4\gamma}{2-\gamma}\rho^{\gamma-1}f^{\frac{\gamma}{2-\gamma}}\partial_{x}\rho - \frac{2\gamma}{2-\gamma}Bf^{\frac{2}{2-\gamma}}\partial_{x}B\right )}_{=\partial_{x}w} \\
&& \bigg [ \left (\frac{1}{2}\frac{B^{2}}{\rho^{2}}f + \alpha \frac{\gamma}{2-\gamma} f^{-1}\partial_{\rho}f\right ) \partial_{x}\rho + \left ( -\frac{1}{2} \frac{B}{\rho}f + \frac{1}{2} \frac{B^{2}+B_{0}^{2}}{\rho^{\gamma+1}}Bf -\frac{1}{4}\frac{B^{3}}{\rho^{\gamma+1}}f^{2}- \alpha \frac{\gamma-4}{2(2-\gamma)} \frac{B}{\rho^{\gamma-1}}\partial_{\rho}f\right )\partial_{x}B\bigg ] \\
&& + W\big(\partial_{x}B\big)^{2}\bigg ( \frac{\gamma}{2(2-\gamma)}\frac{1}{\rho^{\gamma+1}}f^{\frac{2}{2-\gamma}}\underbrace{\left ( 4\rho^{\gamma}(B^{2}-B_{0}^{2})-2\rho^{\gamma}B^{2}f+2(B^{2}+B_{0}^{2})B^{2}f-B^{4}f^{2}\right )}_{=-4B_{0}^{2}\rho^{\gamma}} - \alpha \frac{B^{2}}{\rho^{\gamma-1}}f^{\frac{2}{2-\gamma}}\partial_{\rho}f\bigg ) \\
&& - \alpha W\left ( \frac{4\gamma}{2-\gamma}\rho^{\gamma-1}f^{\frac{\gamma}{2-\gamma}}\partial_{x}\rho + \frac{2\gamma}{2-\gamma}Bf^{\frac{2}{2-\gamma}}\partial_{x}B\right )^{2} \\
&=& -\partial_{x}W\bigg [ \left (\frac{1}{2}\frac{B^{2}}{\rho^{2}}f + \alpha \frac{\gamma}{2-\gamma} f^{-1}\partial_{\rho}f\right ) \partial_{x}\rho + \left ( -\frac{1}{2} \frac{B}{\rho}f + \frac{1}{2} \frac{B^{2}+B_{0}^{2}}{\rho^{\gamma+1}}Bf -\frac{1}{4}\frac{B^{3}}{\rho^{\gamma+1}}f^{2}- \alpha \frac{\gamma-4}{2(2-\gamma)} \frac{B}{\rho^{\gamma-1}}\partial_{\rho}f\right )\partial_{x}B\bigg ] \\
&& - W\big(\partial_{x}B\big)^{2}\left ( \frac{2\gamma}{2-\gamma}\frac{1}{\rho}B_{0}^{2}f^{\frac{2}{2-\gamma}}+\alpha \frac{B^{2}}{\rho^{\gamma-1}}f^{\frac{2}{2-\gamma}}\partial_{\rho}f\right )- \alpha W\left ( \frac{4\gamma}{2-\gamma}\rho^{\gamma-1}f^{\frac{\gamma}{2-\gamma}}\partial_{x}\rho + \frac{2\gamma}{2-\gamma}Bf^{\frac{2}{2-\gamma}}\partial_{x}B\right )^{2} \\
& \le & -\partial_{x}W\bigg [ \left (\frac{1}{2}\frac{B^{2}}{\rho^{2}}f + \alpha \frac{\gamma}{2-\gamma} f^{-1}\partial_{\rho}f\right ) \partial_{x}\rho + \left ( -\frac{1}{2} \frac{B}{\rho}f + \frac{1}{2} \frac{B^{2}+B_{0}^{2}}{\rho^{\gamma+1}}Bf -\frac{1}{4}\frac{B^{3}}{\rho^{\gamma+1}}f^{2}- \alpha \frac{\gamma-4}{2(2-\gamma)} \frac{B}{\rho^{\gamma-1}}\partial_{\rho}f\right )\partial_{x}B\bigg ],
\end{IEEEeqnarray*}}
where the last inequality follows from the fact that
{\small
\begin{IEEEeqnarray*}{rCl}
\IEEEeqnarraymulticol{3}{l}{\frac{2\gamma}{2-\gamma}\frac{1}{\rho}B_{0}^{2}+\alpha \frac{B^{2}}{\rho^{\gamma-1}}\partial_{\rho}f \ge 0} \\
&\Longleftrightarrow& \frac{2\gamma}{2-\gamma}\frac{1}{\rho}B_{0}^{2}\ge \frac{B_{0}^{2}}{\rho}\frac{f}{f-2} \frac{B^{2}}{\rho^{\gamma-1}}\frac{4 \gamma \rho^{\gamma-1}B_{0}^{2}}{\left (B^{2}+B_{0}^{2}-\rho^{\gamma}\right )^{2}+4B^{2}\rho^{\gamma}}\frac{1}{1+ \frac{B^{2}-B_{0}^{2}+\rho^{\gamma}}{\sqrt{\left (B^{2}+B_{0}^{2}-\rho^{\gamma}\right )^{2}+4B^{2}\rho^{\gamma}}}} \\
&\Longleftrightarrow& \frac{1}{2-\gamma}\ge \frac{f}{f-2} B^{2}\frac{2 B_{0}^{2}}{\left (B^{2}+B_{0}^{2}-\rho^{\gamma}\right )^{2}+4B^{2}\rho^{\gamma}}\frac{1}{1+ \frac{B^{2}-B_{0}^{2}+\rho^{\gamma}}{\sqrt{\left (B^{2}+B_{0}^{2}-\rho^{\gamma}\right )^{2}+4B^{2}\rho^{\gamma}}}} \\
&\Longleftarrow& \left (1-\frac{2}{f}\right )\left ( \big(B^{2}+B_{0}^{2}-\rho^{\gamma}\big)^{2}+4B^{2}\rho^{\gamma}+\big (B^{2}-B_{0}^{2}+\rho^{\gamma}\big )\sqrt{\big(B^{2}+B_{0}^{2}-\rho^{\gamma}\big)^{2}+4B^{2}\rho^{\gamma}}\right ) \ge 2B_{0}^{2}B^{2} \\
&\Longleftrightarrow& \big(B^{2}+B_{0}^{2}-\rho^{\gamma}\big)^{2}+4B^{2}\rho^{\gamma} - (B^{2}+B_{0}^{2}-\rho^{\gamma})\sqrt{\big(B^{2}+B_{0}^{2}-\rho^{\gamma}\big)^{2}+4B^{2}\rho^{\gamma}} \ge 2B^{2}\rho^{\gamma} \\
& \Longleftarrow & \big(B^{2}+B_{0}^{2}-\rho^{\gamma}\big)^{4} +4B^{2}\rho^{\gamma}\big(B^{2}+B_{0}^{2}-\rho^{\gamma}\big)^{2}+4B^{4}\rho^{2\gamma} \ge \big(B^{2}+B_{0}^{2}-\rho^{\gamma}\big)^{4}+4B^{2}\rho^{\gamma}\big(B^{2}+B_{0}^{2}-\rho^{\gamma}\big)^{2}.
\end{IEEEeqnarray*}}

Similar computation gives that for $(\rho, B)$ with $g(\rho, B)<\infty$, it holds that {\small
\begin{IEEEeqnarray*}{rCl}
\IEEEeqnarraymulticol{3}{l}{\partial_{t}Z(\rho, B) - \beta(\rho, B)\partial_{x}^{2}Z(\rho, B)} \\
&=& \partial_{\rho}Z\partial_{x}^{2}\left ( \frac{1}{\gamma}\rho^{\gamma}+\frac{1}{2}B^{2}\right ) + \partial_{B}Z \partial_{x}\left ( \frac{B}{\rho}(\rho^{\gamma-1}\partial_{x}\rho+B\partial_{x}B)+\frac{B_{0}^{2}}{\rho}\partial_{x}B\right ) \\
&& - \beta \left ( \partial_{\rho\rho}Z\big (\partial_{x}\rho\big)^{2} + 2\partial_{\rho B}Z \partial_{x}\rho\partial_{x}B + \partial_{BB}Z\big(\partial_{x}B\big)^{2}+ \partial_{\rho}Z\partial_{x}^{2}\rho+\partial_{B}Z\partial_{x}^{2}B\right ) \\
&=& \partial_{\rho}Z\left ( \rho^{\gamma-1}\partial_{x}^{2}\rho+(\gamma-1)\rho^{\gamma-2}\big(\partial_{x}\rho\big)^{2}+B\partial_{x}^{2}B+\big(\partial_{x}B\big)^{2}\right ) \\
&& + \partial_{B}Z \bigg (  \rho^{\gamma-2}B\partial_{x}^{2}\rho + \frac{B^{2}+B_{0}^{2}}{\rho}\partial_{x}^{2}B + \rho^{\gamma-2}\partial_{x}\rho\partial_{x}B + (\gamma-2)\rho^{\gamma-3}B\big(\partial_{x}\rho\big)^{2}-\frac{B^{2}+B_{0}^{2}}{\rho^{2}}\partial_{x}\rho\partial_{x}B + \frac{2B}{\rho}\big(\partial_{x}B\big)^{2} \bigg ) \\
&& - \beta \left ( \partial_{\rho\rho}Z\big (\partial_{x}\rho\big)^{2} + 2\partial_{\rho B}Z \partial_{x}\rho\partial_{x}B + \partial_{BB}Z\big(\partial_{x}B\big)^{2}+ \partial_{\rho}Z\partial_{x}^{2}\rho+\partial_{B}Z\partial_{x}^{2}B\right ) \\
&=& \partial_{x}^{2}\rho\underbrace{\left ( \rho^{\gamma-1}\partial_{\rho}Z + \rho^{\gamma-2}B\partial_{B}Z - \beta \partial_{\rho}Z\right )}_{=0} \\
&& + \partial_{x}^{2}B\underbrace{\left ( B\partial_{\rho}Z + \frac{B^{2}+B_{0}^{2}}{\rho}\partial_{B}Z - \beta \partial_{B}Z\right )}_{=0} \\
&& + \big(\partial_{x}\rho\big)^{2}\left ( (\gamma-1)\rho^{\gamma-2}\partial_{\rho}Z + (\gamma-2)\rho^{\gamma-3}B\partial_{B}Z - \beta \partial_{\rho\rho}Z\right ) \\
&& + \partial_{x}\rho\partial_{x}B\left ( \rho^{\gamma-2}\partial_{B}Z-\frac{B^{2}+B_{0}^{2}}{\rho^{2}}\partial_{B}Z - 2\beta \partial_{\rho B}Z\right ) \\
&& + \big(\partial_{x}B\big)^{2}\left ( \partial_{\rho}Z + \frac{2B}{\rho}\partial_{B}Z - \beta \partial_{BB}Z\right ) \\
&=& Z\big(\partial_{x}\rho\big)^{2}\bigg ( -(\gamma-1)\frac{4\gamma}{2-\gamma}\rho^{2\gamma-3}g^{\frac{\gamma}{2-\gamma}} - 2\gamma\rho^{\gamma-3}B^{2} g^{\frac{2}{2-\gamma}} + \beta \frac{4\gamma}{2-\gamma}(\gamma-1)\rho^{\gamma-2}g^{\frac{\gamma}{2-\gamma}}  + \beta \frac{4\gamma^{2}}{(2-\gamma)^{2}} \rho^{\gamma-1}g^{\frac{2\gamma-2}{2-\gamma}}\partial_{\rho}g \bigg ) \\
&& + Z\big(\partial_{x}\rho\big)\big(\partial_{x}B\big)\left (\frac{2\gamma}{2-\gamma} \rho^{\gamma-2}B g^{\frac{2}{2-\gamma}}-\frac{2\gamma}{2-\gamma}\frac{B^{2}+B_{0}^{2}}{\rho^{2}}B g^{\frac{2}{2-\gamma}} \right ) \\
&& + Z\big(\partial_{x}B\big)^{2}\left ( - \frac{4\gamma}{2-\gamma}\rho^{\gamma-1} g^{\frac{\gamma}{2-\gamma}} + \frac{4\gamma}{2-\gamma}\frac{B^{2}}{\rho} g^{\frac{2}{2-\gamma}} -\beta \frac{2\gamma}{2-\gamma}g^{\frac{2}{2-\gamma}} -\beta \frac{4\gamma}{(2-\gamma)^{2}} B g^{\frac{\gamma}{2-\gamma}}\partial_{B}g \right ) \\
&& - \beta Z \left ( \frac{4\gamma}{2-\gamma}\rho^{\gamma-1}g^{\frac{\gamma}{2-\gamma}}\partial_{x}\rho - \frac{2\gamma}{2-\gamma}B g^{\frac{2}{2-\gamma}}\partial_{x}B\right )^{2}\\
&=& Z\big(\partial_{x}\rho\big)^{2}\left ( - \frac{2\gamma}{2-\gamma}\rho^{\gamma-3}B^{2} g^{\frac{2}{2-\gamma}}  + \beta \frac{4\gamma^{2}}{(2-\gamma)^{2}} \rho^{\gamma-1}g^{\frac{2\gamma-2}{2-\gamma}}\partial_{\rho}g \right ) \\
&& + Z\big(\partial_{x}\rho\big)\big(\partial_{x}B\big)\left (\frac{2\gamma}{2-\gamma} \rho^{\gamma-2}B g^{\frac{2}{2-\gamma}}-\frac{2\gamma}{2-\gamma}\frac{B^{2}+B_{0}^{2}}{\rho^{2}}B g^{\frac{2}{2-\gamma}} -\beta \frac{8\gamma}{(2-\gamma)^{2}} B g^{\frac{\gamma}{2-\gamma}} \partial_{\rho}g\right ) \\
&& + Z\big(\partial_{x}B\big)^{2}\left ( \frac{2\gamma}{2-\gamma} \frac{B^{2}-B_{0}^{2}}{\rho}g^{\frac{2}{2-\gamma}} +\beta \frac{4}{(2-\gamma)^{2}} \frac{B^{2}}{\rho^{\gamma-1}} g^{\frac{2}{2-\gamma}}\partial_{\rho}g \right ) \\
&& - \beta Z \left ( \frac{4\gamma}{2-\gamma}\rho^{\gamma-1}g^{\frac{\gamma}{2-\gamma}}\partial_{x}\rho - \frac{2\gamma}{2-\gamma}B g^{\frac{2}{2-\gamma}}\partial_{x}B\right )^{2}\\
&=& Z\underbrace{\left (\frac{4\gamma}{2-\gamma}g^{\frac{\gamma}{2-\gamma}}\rho^{\gamma-1}\partial_{x}\rho-\frac{2\gamma}{2-\gamma}g^{\frac{2}{2-\gamma}}B\partial_{x}B\right )}_{=\partial_{x}z} \\
&& \left ( \left ( -\frac{1}{2}\frac{B^{2}}{\rho^{2}}g + \beta \frac{\gamma}{2-\gamma}g^{-1}\partial_{\rho}g\right )\partial_{x}\rho + \left ( \frac{1}{2} \frac{B}{\rho} g - \frac{1}{2} \frac{B^{2}+B_{0}^{2}}{\rho^{\gamma+1}}Bg - \frac{1}{4} \frac{B^{3}}{\rho^{\gamma+1}}g^{2} +\beta\frac{1}{2}\frac{\gamma-4}{2-\gamma}\frac{B}{\rho^{\gamma-1}}\partial_{\rho}g\right )\partial_{x}B\right )\\
&& + Z \big(\partial_{x}B\big)^{2} \left ( \frac{\gamma}{2(2-\gamma)}\frac{1}{\rho^{\gamma+1}}g^{\frac{2}{2-\gamma}}\underbrace{\left (4 \rho^{\gamma}(B^{2}-B_{0}^{2})+ 2 \rho^{\gamma}B^{2} g - 2(B^{2}+B_{0}^{2}) B^{2} g -B^{4} g^{2}\right )}_{=-4B_{0}^{2}\rho^{\gamma}} +\beta \frac{B^{2}}{\rho^{\gamma-1}} g^{\frac{2}{2-\gamma}}\partial_{\rho}g  \right ) \\
&& - \beta Z \left ( \frac{4\gamma}{2-\gamma}\rho^{\gamma-1}g^{\frac{\gamma}{2-\gamma}}\partial_{x}\rho - \frac{2\gamma}{2-\gamma}B g^{\frac{2}{2-\gamma}}\partial_{x}B\right )^{2}\\
&=& -\partial_{x}Z \left ( \left ( -\frac{1}{2}\frac{B^{2}}{\rho^{2}}g + \beta \frac{\gamma}{2-\gamma}g^{-1}\partial_{\rho}g\right )\partial_{x}\rho + \left ( \frac{1}{2} \frac{B}{\rho} g - \frac{1}{2} \frac{B^{2}+B_{0}^{2}}{\rho^{\gamma+1}}Bg - \frac{1}{4} \frac{B^{3}}{\rho^{\gamma+1}}g^{2} +\beta\frac{1}{2}\frac{\gamma-4}{2-\gamma}\frac{B}{\rho^{\gamma-1}}\partial_{\rho}g\right )\partial_{x}B\right ) \\
&& -Z\big(\partial_{x}B\big)^{2} \left ( \frac{2\gamma}{2-\gamma}\frac{1}{\rho}B_{0}^{2}g^{\frac{2}{2-\gamma}}-\beta \frac{B^{2}}{\rho^{\gamma-1}}g^{\frac{2}{2-\gamma}}\partial_{\rho}g\right )- \beta Z \left ( \frac{4\gamma}{2-\gamma}\rho^{\gamma-1}g^{\frac{\gamma}{2-\gamma}}\partial_{x}\rho - \frac{2\gamma}{2-\gamma}B g^{\frac{2}{2-\gamma}}\partial_{x}B\right )^{2} \\
& \le & -\partial_{x}Z \left ( \left ( -\frac{1}{2}\frac{B^{2}}{\rho^{2}}g + \beta \frac{\gamma}{2-\gamma}g^{-1}\partial_{\rho}g\right )\partial_{x}\rho + \left ( \frac{1}{2} \frac{B}{\rho} g - \frac{1}{2} \frac{B^{2}+B_{0}^{2}}{\rho^{\gamma+1}}Bg - \frac{1}{4} \frac{B^{3}}{\rho^{\gamma+1}}g^{2} +\beta\frac{1}{2}\frac{\gamma-4}{2-\gamma}\frac{B}{\rho^{\gamma-1}}\partial_{\rho}g\right )\partial_{x}B\right ),
\end{IEEEeqnarray*}}
where the last inequality follows from the fact that {\small
\begin{IEEEeqnarray*}{rCl}
\IEEEeqnarraymulticol{3}{l}{\frac{2\gamma}{2-\gamma}\frac{1}{\rho}B_{0}^{2}-\beta \frac{B^{2}}{\rho^{\gamma-1}}\partial_{\rho}g \ge 0} \\
&\Longleftrightarrow& \frac{2\gamma}{2-\gamma}\frac{1}{\rho}B_{0}^{2}\ge B_{0}^{2} \frac{g}{g+2} \frac{B^{2}}{\rho} \frac{4B_{0}^{2}\gamma}{\big(B^{2}+B_{0}^{2}-\rho^{\gamma}\big)^{2}+4B^{2}\rho^{\gamma}}\frac{1}{1-\frac{B^{2}-B_{0}^{2}+\rho^{\gamma}}{\sqrt{\big( B^{2}+B_{0}^{2}-\rho^{\gamma}\big)^{2}+4B^{2}\rho^{\gamma}}}} \\
&\Longleftrightarrow& \frac{1}{2-\gamma}\ge \frac{g}{g+2}B^{2} \frac{2B_{0}^{2}}{\big(B^{2}+B_{0}^{2}-\rho^{\gamma}\big)^{2}+4B^{2}\rho^{\gamma}}\frac{1}{1-\frac{B^{2}-B_{0}^{2}+\rho^{\gamma}}{\sqrt{\big( B^{2}+B_{0}^{2}-\rho^{\gamma}\big)^{2}+4B^{2}\rho^{\gamma}}}} \\
&\Longleftarrow& \left ( 1+\frac{2}{g}\right )\left ( \big(B^{2}+B_{0}^{2}-\rho^{\gamma}\big)^{2}+4B^{2}\rho^{\gamma}-\big (B^{2}-B_{0}^{2}+\rho^{\gamma}\big )\sqrt{\big(B^{2}+B_{0}^{2}-\rho^{\gamma}\big)^{2}+4B^{2}\rho^{\gamma}}\right ) \ge 2B_{0}^{2}B^{2} \\
& \Longleftrightarrow & \big (B^{2}+B_{0}^{2}-\rho^{\gamma}\big )^{2}+4B^{2}\rho^{\gamma} + (B^{2}+B_{0}^{2}-\rho^{\gamma}) \sqrt{\big(B^{2}+B_{0}^{2}-\rho^{\gamma}\big)^{2}+4B^{2}\rho^{\gamma}} \ge 2B^{2}\rho^{\gamma} \\
& \Longleftarrow & \big(B^{2}+B_{0}^{2}-\rho^{\gamma}\big)^{4} +4B^{2}\rho^{\gamma}\big(B^{2}+B_{0}^{2}-\rho^{\gamma}\big)^{2}+4B^{4}\rho^{2\gamma} \ge \big(B^{2}+B_{0}^{2}-\rho^{\gamma}\big)^{4}+4B^{2}\rho^{\gamma}\big(B^{2}+B_{0}^{2}-\rho^{\gamma}\big)^{2}.
\end{IEEEeqnarray*}}

\subsubsection{Proof that $\rho$, $\rho^{-1}$, and $|B|$ are bounded from above} \label{subsubsec2.2.3}

Recall that $(\rho, B) \mapsto (W(\rho, B), Z(\rho, B))$ is $C^{2}$ by Appendix \ref{appendixA}. Hence, if $\rho, B$ are both $C^{2}([0, T) \times \TT)$ functions, then $W(\rho, B)$ and $Z(\rho, B)$ are also both $C^{2}$ functions. Now assume further that $\rho$ is positive on $[0, T) \times \TT$. Under these assumptions, we claim that $\rho, \rho^{-1}$, and $B$ have uniform upper bound on $[0, T) \times \TT$, depending only on $\rho_{t=0}$ and $B_{t=0}$. Note that it is enough to prove boundedness on $[0, T-\epsilon] \times \TT$ with uniform-in-$\epsilon$ bound, so without loss of generality we may assume that $\rho, B$ are $C^{2}([0, T]\times \TT)$ and $\rho>0$ on $[0, T]\times \TT$.

Let $\delta>0$ be arbitrary, and consider the point $(t_{W}, x_{W})$ where \[
W(\rho(t, x), B(t, x))e^{-\delta t} \quad ((t, x) \in [0, T] \times \TT)
\]
is maximized. We claim that $t_{W}=0$, or at least that we may assume so. Assume $t_{W}>0$. Then \[
\partial_{x}W(t_{W}, x_{W}) =0, \quad \partial_{x}^{2}W(t_{W}, x_{W}) \le 0, \quad \partial_{t}W(t_{W}, x_{W}) -\delta W(t_{W}, x_{W}) \ge 0.
\] 

If $f(t_{W}, x_{W})=\infty$, this would imply $W(t_{W}, x_{W})=0$, and thus $W\equiv 0$ on $[0, T] \times \TT$. Hence we may assume $t_{W}=0$ in this case. 

If $f(t_{W}, x_{W})<\infty$, then from the equation we obtained in Section \ref{subsubsec2.2.2}, \[
\delta W \le \partial_{t}W-\alpha \partial_{x}^{2}W \le -\partial_{x}W \cdot \left ( \text{something} \right ) =0
\] at $(t_{W}, x_{W})$. Thus $W(t_{W}, x_{W})=0$, and this contradicts that $f(t_{W}, x_{W})<\infty$.

This proves that \[
\max_{(t, x) \in [0, T]\times \TT} W(t, x)e^{-\epsilon t} = \max_{x \in \TT} W(0, x),
\] and taking the limit $\delta \to 0+$ gives \[
\max_{(t, x) \in [0, T]\times \TT}W(t, x) = \max_{x \in \TT}W(0, x).
\] Similar argument with $(Z, \beta, g)$ instead of $(W, \alpha, f)$ shows that \[
\max_{(t, x) \in [0, T]\times \TT}Z(t, x) = \max_{x \in \TT} Z(0, x).
\] Thus, $W, Z$ has uniform upper bound on $[0, T]\times \TT$, which depend only on $\rho_{t=0}$ and $B_{t=0}$. We are left to prove that \[
\{(\rho, B) \in \RR^{+} \times \RR: W(\rho, B) \le W_{0}, Z(\rho, B) \le Z_{0}\}
\] is a bounded set away from the line $\rho=0$. (It is worthwhile to note that $Z_{0}<1$.) The level curves depicted in Figure \ref{20260111fig1} suggests that this must be true.

\begin{figure}[htbp]
\centering
\begin{subfigure}[b]{0.4\textwidth}
\centering
\includegraphics[width=\textwidth]{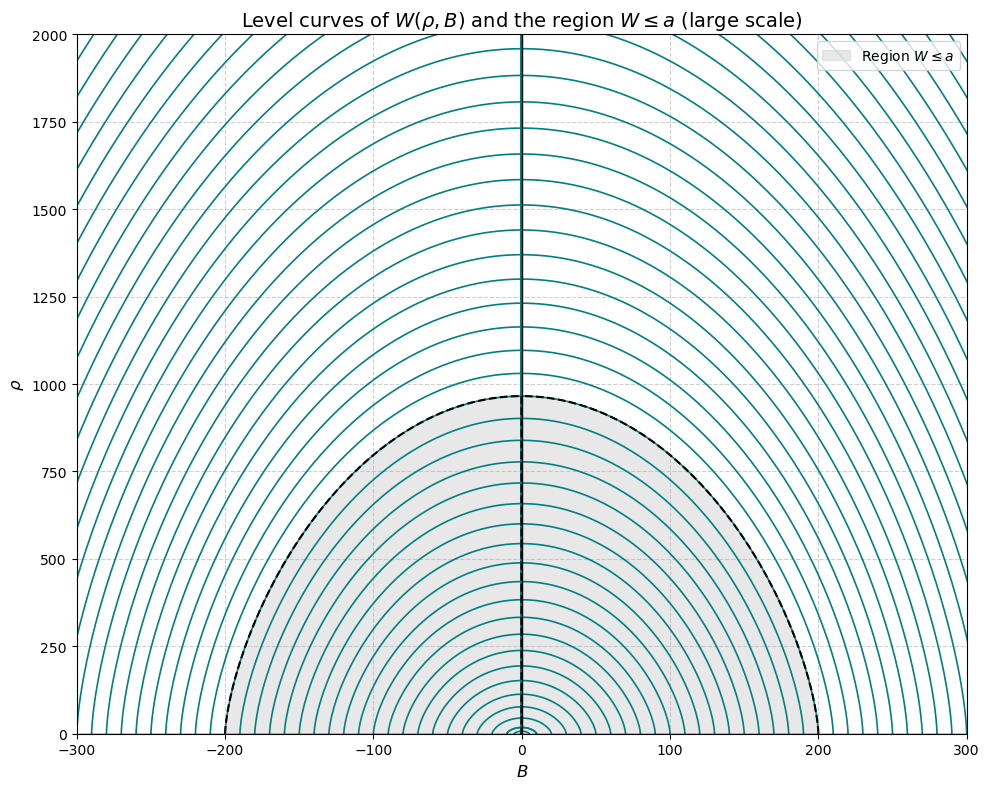}
\caption{Level curves of $W$ (large scale)}
\end{subfigure}
\hfill
\begin{subfigure}[b]{0.4\textwidth}
\centering
\includegraphics[width=\textwidth]{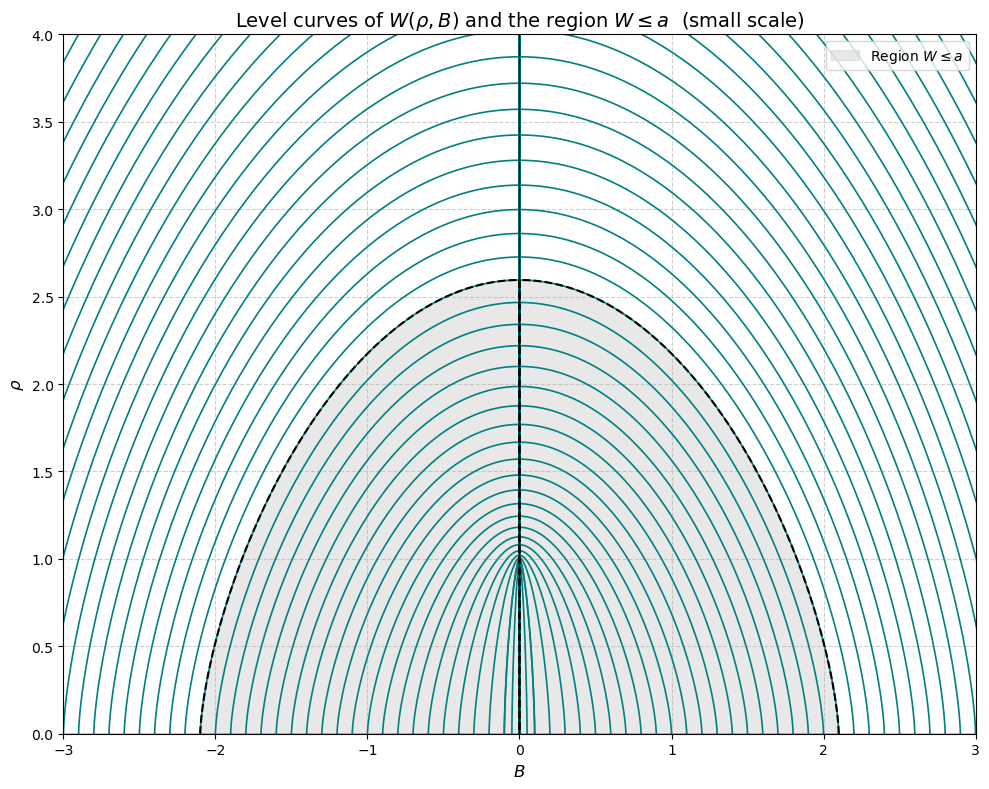}
\caption{Level curves of $W$ (small scale)}
\end{subfigure}
\vspace{1em}
\begin{subfigure}[b]{0.4\textwidth}
\centering
\includegraphics[width=\textwidth]{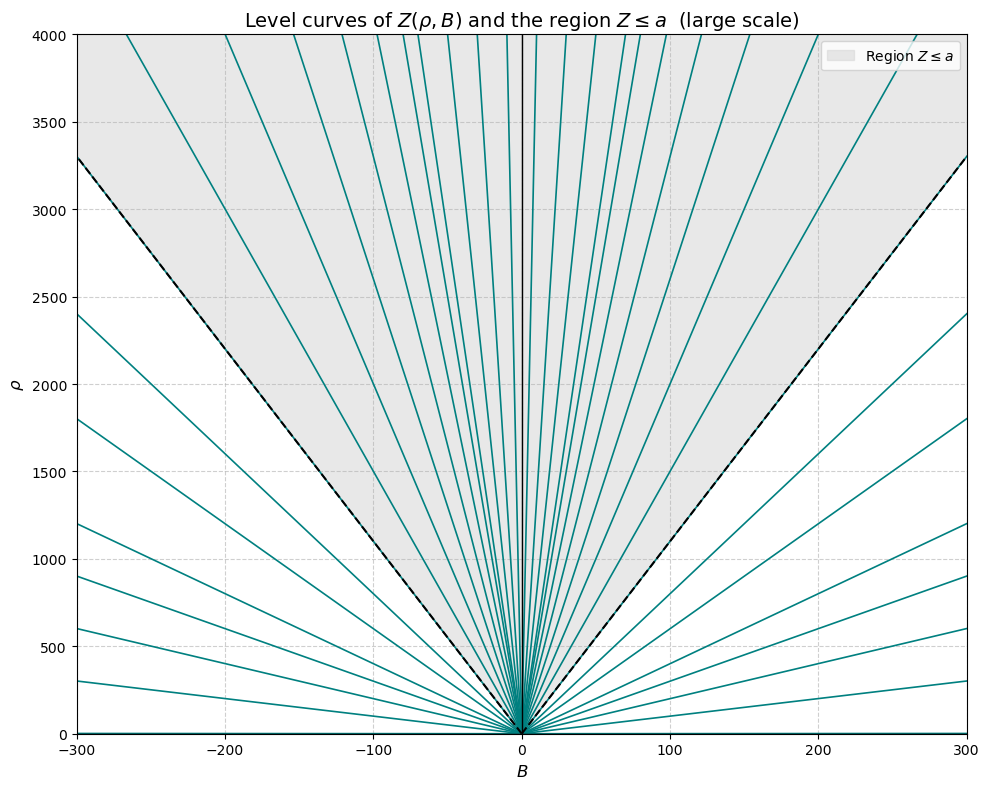}
\caption{Level curves of $Z$ (large scale)}
\end{subfigure}
\hfill
\begin{subfigure}[b]{0.4\textwidth}
\centering
\includegraphics[width=\textwidth]{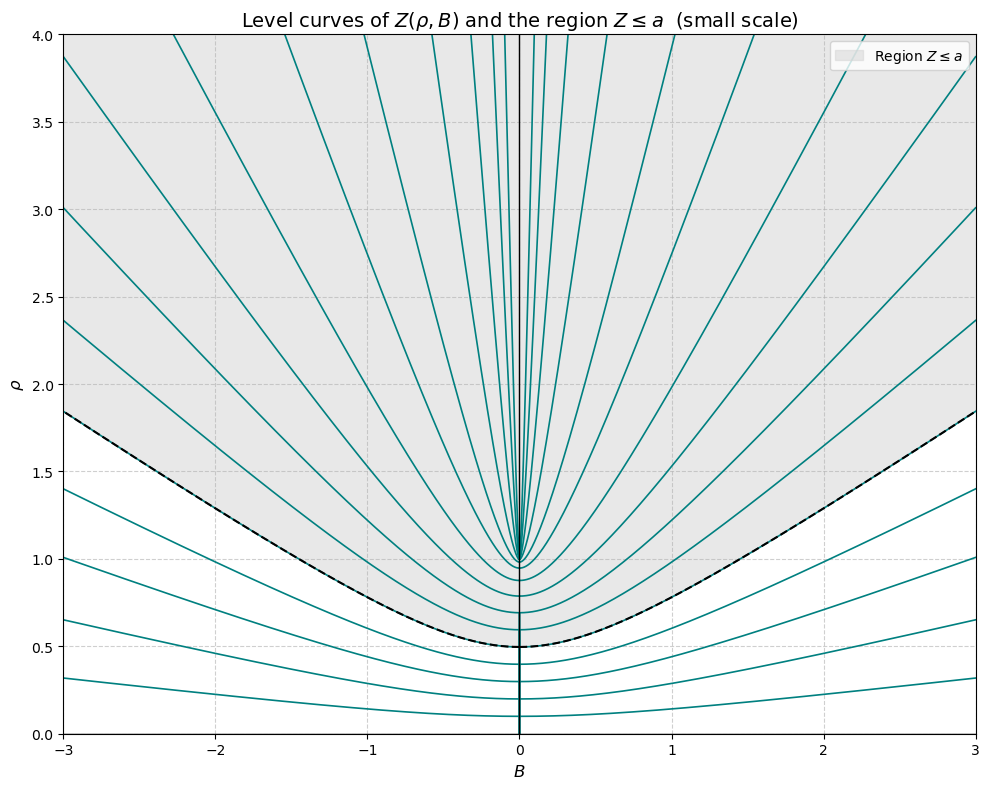}
\caption{Level curves of $Z$ (small scale)}
\end{subfigure}
\caption{Level curves of $W$ and $Z$ drawn for $\gamma=1.5, B_{0}=1$. As one can see in the small scale plots, there is a singularity near $(\rho, B)=(B_{0}^{\frac{2}{\gamma}}, 0)$. The shaded regions are the set $\{W \le W_{0}\}$, $\{Z \le Z_{0}\}$, for some constants $W_{0} \in \RR$ and $Z_{0}<1$.}
\label{20260111fig1}
\end{figure}

To prove this rigorously, we will first prove that if $W \le W_{0}$, then $\rho$ and $|B|$ are bounded from above. From our calculation of $\partial_{\rho}W$ and $\partial_{B}W$ in Appendix \ref{appendixA} (equation \eqref{20260117eq15} and \eqref{20260117eq16}), we observe that \[
\partial_{\rho}W(\rho, B) \ge 0, \quad \partial_{B}W(\rho, B) \ge 0 \quad \text{if }\rho>0 \text{ and }B \ge 0.
\] Hence, if $W(\rho_{M}, 0) \ge M$ and $W(\rho, B_{M})$ for all $0<\rho\le 1$, then $W(\rho, B) \ge M$ whenever $\rho \ge \rho_{M}$ or $B \ge B_{M}$. Since $W(\rho, B)$ is even in $B$, it is enough to prove that for any $M>0$, there exists some $\rho_{M}$ and $B_{M}$ such that 
\begin{itemize}
\item[(1)] $W(\rho_{M}, 0) \ge M$, and
\item[(2)] $W(\rho, B_{M}) \ge M$ for all $0<\rho\le 1$.
\end{itemize}
(1) follows from the following observation: \begin{IEEEeqnarray*}{rCl}
\lim_{\rho \to \infty}w(\rho, 0) &=& \lim_{\rho \to \infty}B_{0}^{2}\left ( \frac{2\rho^{\gamma}}{\rho^{\gamma}-B_{0}^{2}}\right )^{\frac{2}{2-\gamma}} - \int_{\frac{2\rho^{\gamma}}{\rho^{\gamma}-B_{0}^{2}}}^{4}\frac{2B_{0}^{2}}{2-\gamma}\frac{1-\frac{1}{4}s}{\left (1-\frac{1}{2}s\right )^{2}}s^{\frac{2}{2-\gamma}}\dd s \\
&=& \lim_{\rho \to \infty} 4^{\frac{2}{2-\gamma}}B_{0}^{2} -\int_{\frac{2\rho^{\gamma}}{\rho^{\gamma}-B_{0}^{2}}}^{4}\frac{2B_{0}^{2}}{2-\gamma}\frac{1}{\left (1-\frac{1}{2}s\right )^{2}}s^{\frac{\gamma}{2-\gamma}}\dd s \\
&=& C_{1} - C_{2}\lim_{f \to 2+} \int_{f}^{4} \frac{1}{\left(1-\frac{1}{2}s\right )^{2}}s^{\frac{\gamma}{2-\gamma}}\dd s = -\infty.
\end{IEEEeqnarray*}
(2) follows from 
\begin{IEEEeqnarray*}{rCl}
\IEEEeqnarraymulticol{3}{l}{\lim_{B \to \infty}\sup_{0 < \rho \le 1}w(\rho, B)} \\
&=& \lim_{B \to \infty} \sup_{0<\rho\le 1}\left [4^{\frac{2}{2-\gamma}}B_{0}^{2}+B^{2}f^{\frac{2}{2-\gamma}} - \int_{f}^{4} \frac{2B_{0}^{2}}{2-\gamma} \frac{1}{\left (1-\frac{1}{2}s\right )^{2}}s^{\frac{\gamma}{2-\gamma}}\dd s\right ] \\
& \le & \limsup_{B \to \infty}\sup_{0<\rho \le 1} \left [ 4^{\frac{2}{2-\gamma}}B_{0}^{2} + B^{2}f^{\frac{2}{2-\gamma}} - \frac{2B_{0}^{2}}{2-\gamma}\int_{f}^{4}\frac{1}{\left (1-\frac{1}{2}s\right )^{2}}f^{\frac{\gamma}{2-\gamma}}\dd s \right ] \\
& =& \limsup_{B \to \infty} \sup_{0<\rho \le 1} \left [C_{1} + B^{2}f^{\frac{2}{2-\gamma}} -\frac{8}{2-\gamma}B_{0}^{2}\frac{1}{f-2}f^{\frac{\gamma}{2-\gamma}}\right ] \\
& =& \limsup_{B \to \infty} \sup_{0<\rho \le 1} \left [C_{1} + B^{2}f^{\frac{2}{2-\gamma}} -\frac{4}{2-\gamma}B_{0}^{2}\frac{\sqrt{\big(B^{2}+B_{0}^{2}-\rho^{\gamma}\big)^{2}+4B^{2}\rho^{\gamma}}-\big(B^{2}+B_{0}^{2}-\rho^{\gamma}\big)}{B^{2}+B_{0}^{2}+\rho^{\gamma}-\sqrt{\big(B^{2}+B_{0}^{2}-\rho^{\gamma}\big)^{2}+4B^{2}\rho^{\gamma}}}f^{\frac{\gamma}{2-\gamma}}\right ] \\
& =& \limsup_{B \to \infty} \sup_{0<\rho \le 1} \left [C_{1} + B^{2}f^{\frac{2}{2-\gamma}} -\frac{2}{2-\gamma}\left (\big(B^{2}-B_{0}^{2}+\rho^{\gamma}\big)+\sqrt{\big(B^{2}+B_{0}^{2}-\rho^{\gamma}\big)^{2}+4B^{2}\rho^{\gamma}}\right )f^{\frac{\gamma}{2-\gamma}}\right ] \\
&\le & \limsup_{B \to \infty}\sup_{0 < \rho \le 1} \left [ C_{1} + f^{\frac{\gamma}{2-\gamma}} \left ( B^{2}+B_{0}^{2}-\rho^{\gamma} - \sqrt{\big(B^{2}+B_{0}^{2}-\rho^{\gamma}\big)^{2}+4B^{2}\rho^{\gamma}} - 2(B^{2}-B_{0}^{2}+\rho^{\gamma}) \right ) \right ] \\
&=& -\infty.
\end{IEEEeqnarray*}
Hence, $W \le W_{0}$ implies that $\rho$ and $|B|$ are bounded from above. Next, we will prove that if $Z \le Z_{0}<1$, then $\rho$ is bounded away from zero. We have
\begin{IEEEeqnarray*}{rCl}
\lim_{\rho \to 0}z(\rho, 0) &=& \lim_{\rho \to 0} \int_{0}^{\frac{2\rho^{\gamma}}{B_{0}^{2}-\rho^{\gamma}}}\frac{2B_{0}^{2}}{2-\gamma}\frac{1}{\left (1+\frac{1}{2}s\right )^{2}}s^{\frac{\gamma}{2-\gamma}}\dd s \\
&=& \int_{g \to 0+} \int_{0}^{g}\frac{2B_{0}^{2}}{2-\gamma}\frac{1}{\left (1+\frac{1}{2}s \right )^{2}}s^{\frac{\gamma}{2-\gamma}}\dd s =0.
\end{IEEEeqnarray*} 
Hence $Z(\rho, 0) \to 1$ as $\rho \to 0+$, and thus for any $Z_{0}<1$, there exists $\rho_{Z_{0}}>0$ such that $Z(\rho, 0) >Z_{0}$ for all $0<\rho \le \rho_{Z_0}$. Now, from \eqref{20260117eq25} we have $\partial_{B}Z(\rho, B)\mathrm{sgn}(B) \ge 0$, and this shows that $Z(\rho, B) \ge Z(\rho, 0) > Z_{0}$ for any $0 < \rho \le \rho_{Z_0}$ and $B \in \RR$. Thus, $Z \le Z_{0}$ implies that $\rho > \rho_{Z_0}>0$, as desired.

Hence, $\rho, \rho^{-1}$ and $|B|$ are bounded from above by a constant depending on $\rho_{t=0}$ and $B_{t=0}$. We finally remark that this upper bound can be completely determined by the values of $W_{0}$ and $Z_{0}$ (along with constants $\gamma$ and $B_{0}$). By definition of $W_{0}$ and $Z_{0}$, they can be chosen to depend only on $\|\rho_{t=0}\|_{L^{\infty}}, \|\rho_{t=0}^{-1}\|_{L^{\infty}}$, and $\|B_{t=0}\|_{L^{\infty}}$. Thus, the uniform bounds on $\rho, \rho^{-1}$, and $|B|$ can be chosen to depend only on $\gamma$, $B_{0}$, $\|\rho_{t=0}\|_{L^{\infty}}$, $\|\rho_{t=0}^{-1}\|_{L^{\infty}}$, and $\|B_{t=0}\|_{L^{\infty}}$.

\subsection{Proof of Theorem \ref{20260110thm1}}

From the a priori estimate in Lemma \ref{20260110lem1} and no-vacuum result in Section \ref{subsec2.2}, standard approximation argument gives the local well-posedness result. Indeed, consider the parabolic regularization \begin{eqnarray} \label{20260111eq2}
\begin{cases}
\partial_{t}\rho^{(\epsilon)} = \partial_{x}^{2}\left ( \frac{1}{\gamma}\left (\rho^{(\epsilon)}\right )^{\gamma}+\frac{1}{2}\left (B^{(\epsilon)}\right )^{2}\right ) - \epsilon \partial_{x}^{4}\rho^{(\epsilon)}, \\
\partial_{t}B^{(\epsilon)} = \partial_{x}\left ( \frac{B^{(\epsilon)}}{\rho^{(\epsilon)}}\partial_{x}\left ( \frac{1}{\gamma}\left (\rho^{(\epsilon)}\right )^{\gamma}+\frac{1}{2}\left (B^{(\epsilon)}\right )^{2} \right )+\frac{B_{0}^{2}}{\rho^{(\epsilon)}}\partial_{x}B^{(\epsilon)}\right )-\epsilon \partial_{x}^{4}B^{(\epsilon)}, \\
(\rho^{(\epsilon)}, B^{(\epsilon)})(t=0) = (\rho_{t=0}, B_{t=0}).
\end{cases}
\end{eqnarray} Since the initial data is smooth, we have local-in-time smooth solution for this equation. To check that we have a uniform-in-$\epsilon$ bound, we can use Lemma \ref{20260110lem1}. The additional term needed to estimate \[
\partial_{t}\int_{\TT} \left (\left |\left (\rho^{(\epsilon)}\right )^{\frac{\gamma}{2}-1}\partial_{x}^{s}\rho^{(\epsilon)} \right |^{2} + \left |\partial_{x}^{s}B^{(\epsilon)}\right |^{2}\right )
\] is \[
 - \epsilon \int_{\TT}\left |\partial_{x}^{s+2}B^{(\epsilon)}\right |^{2}\dd x - \epsilon \int_{\TT} \left (\rho^{(\epsilon)}\right )^{\frac{\gamma}{2}-1}\partial_{x}^{s}\rho^{(\epsilon)} \left ( \left (\rho^{(\epsilon)}\right )^{\frac{\gamma}{2}-1}\partial_{x}^{s+4}\rho^{(\epsilon)} + (\frac{\gamma}{2}-1)\left (\rho^{(\epsilon)}\right )^{\frac{\gamma}{2}-2} \partial_{x}^{s}\rho^{(\epsilon)}\partial_{x}^{4}\rho^{(\epsilon)}\right )\dd x,
\] and performing integration by parts shows that this can be bounded by
\begin{IEEEeqnarray*}{rCl}
\IEEEeqnarraymulticol{3}{l}{- \epsilon \int_{\TT}\left |\partial_{x}^{s+2}B^{(\epsilon)}\right |^{2}\dd x - \epsilon \int_{\TT} \left (\rho^{(\epsilon)}\right )^{\frac{\gamma}{2}-1}\partial_{x}^{s}\rho^{(\epsilon)} \left ( \left (\rho^{(\epsilon)}\right )^{\frac{\gamma}{2}-1}\partial_{x}^{s+4}\rho^{(\epsilon)} + (\frac{\gamma}{2}-1)\left (\rho^{(\epsilon)}\right )^{\frac{\gamma}{2}-2} \partial_{x}^{s}\rho^{(\epsilon)}\partial_{x}^{4}\rho^{(\epsilon)}\right )\dd x} \\
&=& -\epsilon \int_{\TT} \left | \partial_{x}^{s+2}B^{(\epsilon)}\right |^{2}\dd x - \epsilon \int_{\TT} \left ( \frac{\gamma}{2}-1\right ) \left (\rho^{(\epsilon)}\right )^{\gamma-3}\partial_{x}^{4}\rho^{(\epsilon)} \big(\partial_{x}^{s}\rho^{(\epsilon)}\big)^{2} - \epsilon \int_{\TT} \partial_{x}^{2}\left ( \left (\rho^{(\epsilon)}\right )^{\gamma-2}\partial_{x}^{s}\rho^{(\epsilon)}\right )\partial_{x}^{s+2}\rho^{(\epsilon)} \\
&=& -\epsilon \int_{\TT} \left | \partial_{x}^{s+2}B^{(\epsilon)}\right |^{2}\dd x  -\epsilon \int_{\TT} \left (\rho^{(\epsilon)}\right )^{\gamma-2}\big (\partial_{x}^{s+2}\rho^{(\epsilon)}\big )^{2}  \\
&& - 2\epsilon (\gamma-2) \int_{\TT} \left ( \left (\rho^{(\epsilon)}\right )^{\gamma-3}\partial_{x}^{2}\rho^{(\epsilon)} + (\gamma-3)\left (\rho^{(\epsilon)}\right )^{\gamma-4}\big (\partial_{x}\rho^{(\epsilon)}\big )^{2}\right )\partial_{x}^{s}\rho^{(\epsilon)}\partial_{x}^{s+2}\rho^{(\epsilon)} \\
&& +\frac{1}{2} \epsilon (\gamma-2) \int_{\TT} \left [\partial_{x}^{3}\left (\left (\rho^{(\epsilon)}\right )^{\gamma-3}\partial_{x}\rho^{(\epsilon)}\right ) - \left (\rho^{(\epsilon)}\right )^{\gamma-3}\partial_{x}^{4}\rho^{(\epsilon)} \right ]\big (\partial_{x}^{s}\rho^{(\epsilon)}\big )^{2} \\
& \le & 2\epsilon (\gamma-2)^{2} \int_{\TT} \left (\rho^{(\epsilon)}\right )^{\gamma-4}\big(\partial_{x}^{2}\rho^{(\epsilon)}\big)^{2}\big(\partial_{x}^{s}\rho^{(\epsilon)}\big)^{2} + (\gamma-3)^{2}\left (\rho^{(\epsilon)}\right )^{\gamma-6}\big(\partial_{x}\rho^{(\epsilon)}\big)^{4}\big(\partial_{x}^{s}\rho^{(\epsilon)}\big)^{2} \\
&& +\frac{1}{2} \epsilon (\gamma-2) \int_{\TT} \left [\partial_{x}^{3}\left (\left (\rho^{(\epsilon)}\right )^{\gamma-3}\partial_{x}\rho^{(\epsilon)}\right ) - \left (\rho^{(\epsilon)}\right )^{\gamma-3}\partial_{x}^{4}\rho^{(\epsilon)} \right ]\big (\partial_{x}^{s}\rho^{(\epsilon)}\big )^{2} \\ 
& \le & C_{\gamma} \|\partial_{x}^{s}\rho^{(\epsilon)}\|_{L^{2}}^{2} \|\partial_{x}^{3}\rho^{(\epsilon)}\|_{L^{\infty}} \left ( \eta^{-1} + \|\rho^{(\epsilon)}\|_{W^{3, \infty}}\right )^{C_{\gamma}}.
\end{IEEEeqnarray*}
Thus if $s \ge 5$, this additional term can be absorbed into the right-handed side of \eqref{20260111eq7}. This gives uniform-in-$\epsilon$ bound for $\dot{H}^{s} (s \ge 5)$, assuming $\rho^{(\epsilon)} \ge \eta$. Together with Lemma \ref{20260111lem1}, we obtain a uniform-in-$\epsilon$ bound for $H^{s} (s \ge 5)$, assuming $\rho^{(\epsilon)} \ge \eta$. Noting that \[
\|\partial_{t}\rho^{(\epsilon)}\|_{L^{\infty}} \le C_{\gamma} \left ( \|\rho^{(\epsilon)}\|_{H^{5}}+\|B^{(\epsilon)}\|_{H^{5}}+\eta^{-1}\right )^{C_{\gamma}},
\] a bootstrapping argument shows that there exist a uniform time interval $[0, T]$ such that $\rho^{(\epsilon)}$ is guaranteed to be bigger than some $\eta>0$ (not depending on $\epsilon$), and that there is a uniform $H^{5}$ bound given by Corollary \ref{20260110cor1}. Using $H^{s}$ a-priori estimate inductively gives uniform-in-$\epsilon$ $H^{s}$ estimate for all positive integer $s$. Thus some subsequence of the family $\{(\rho^{(\epsilon)}, B^{(\epsilon)})\}_{\epsilon>0}$ converges to a smooth solution $(\rho, B)$ of \eqref{20260110eq2}.

Based on the result in Section \ref{subsec2.2}, this solution satisfies $\rho \ge \eta$ for some $\eta>0$. In view of Lemma \ref{20260110lem1}, this smooth solution $(\rho, B)$ can be extended up to a maximal time $T$ such that $\lim_{t \to T-}\left (\|\rho(t)\|_{H^{3}} + \|B(t)\|_{H^{3}}\right ) = \infty$. Applying Lemma \ref{20260110lem1} again with $s=3$, we see that this existence time $T$ is bounded from below by a constant depending only on $\gamma, B_{0}, \|\rho_{t=0}\|_{H^{3}}, \|B_{t=0}\|_{H^{3}}$, and $\eta$. As $\eta$ has a lower bound depending on $\gamma, B_{0}, \|\rho_{t=0}\|_{L^{\infty}}, \|B_{t=0}\|_{L^{\infty}}$, and $\|\rho_{t=0}^{-1}\|_{L^{\infty}}$ (see the remark at the end of section \ref{subsubsec2.2.3}), we conclude the lower bound for $T$ of the form $T \ge T(\gamma, B_{0}, \|\rho_{t=0}\|_{H^{3}}, \|B_{t=0}\|_{H^{3}}, \|\rho_{t=0}^{-1}\|_{L^{\infty}})$. This completes the existence part of Theorem \ref{20260110thm1}.

To prove uniqueness, let $(\rho, B)$ and $(\widetilde{\rho}, \widetilde{B})$ be two solutions with the same initial data $(\rho_{t=0}, B_{t=0})$. Let $\eta = \eta(\gamma, B_{0}, \rho_{t=0}, B_{t=0})>0$ be a constant such that $\rho, \rho^{-1}, |B|, \widetilde{\rho}, \widetilde{\rho}^{-1}, \left |\widetilde{B}\right | < \eta^{-1}$. Observe that
\begin{IEEEeqnarray*}{rCl}
\IEEEeqnarraymulticol{3}{l}{\partial_{t}\frac{1}{2}\int_{\TT} \left ( \rho^{\gamma-2}\left |\rho-\widetilde{\rho}\right |^{2}+\left |B-\widetilde{B}\right |^{2}\right )} \\
& = & \int_{\TT} \frac{\gamma-2}{2}\rho^{\gamma-3}\partial_{x}^{2}\left (\frac{1}{\gamma}\rho^{\gamma}+\frac{1}{2}B^{2}\right ) \left |\rho-\widetilde{\rho}\right |^{2} + \int_{\TT} \rho^{\gamma-2}\left (\rho-\widetilde{\rho}\right )\partial_{x}^{2}\left ( \frac{1}{\gamma}\left (\rho^{\gamma}-{\widetilde{\rho}}^{\gamma}\right )+\frac{1}{2}\left (B^{2}-{\widetilde{B}}^{2}\right )\right ) \\
&& + \int_{\TT} \left ( B - \widetilde{B}\right )\partial_{x}\left ( \rho^{\gamma-2}B\partial_{x}\rho - {\widetilde{\rho}}^{\gamma-2}\widetilde{B}\partial_{x}\widetilde{\rho} + \frac{B^{2}+B_{0}^{2}}{\rho}\partial_{x}B - \frac{{\widetilde{B}}^{2}+B_{0}^{2}}{\widetilde{\rho}}\partial_{x}\widetilde{B}\right ) \\
& = & \int_{\TT} \frac{\gamma-2}{2}\rho^{\gamma-3}\partial_{x}^{2}\left (\frac{1}{\gamma}\rho^{\gamma}+\frac{1}{2}B^{2}\right ) \left |\rho-\widetilde{\rho}\right |^{2} \\
&& - \int_{\TT} (\gamma-2)\rho^{\gamma-3}\partial_{x}\rho\left (\rho-\widetilde{\rho}\right )\left ( \rho^{\gamma-1}\partial_{x}\rho - {\widetilde{\rho}}^{\gamma-1}\partial_{x}\widetilde{\rho}\right ) \\
&& - \int_{\TT} (\gamma-2)\rho^{\gamma-3}\partial_{x}\rho\left (\rho-\widetilde{\rho}\right )\left ( B\partial_{x}B-\widetilde{B}\partial_{x}\widetilde{B}\right ) \\
&& - \int_{\TT} \rho^{\gamma-2}\left (\partial_{x}\rho-\partial_{x}\widetilde{\rho}\right )\left ( \rho^{\gamma-1}\partial_{x}\rho - {\widetilde{\rho}}^{\gamma-1}\partial_{x}\widetilde{\rho}\right ) \\
&& - \int_{\TT} \rho^{\gamma-2}\left (\partial_{x}\rho-\partial_{x}\widetilde{\rho}\right )\left ( B\partial_{x}B-\widetilde{B}\partial_{x}\widetilde{B}\right ) \\
&& - \int_{\TT} \left ( \partial_{x}B - \partial_{x}\widetilde{B}\right ) \rho^{\gamma-2}B\left (\partial_{x}\rho-\partial_{x}\widetilde{\rho}\right ) \\
&& - \int_{\TT} \left ( \partial_{x}B - \partial_{x}\widetilde{B}\right ) \rho^{\gamma-2}\left (B-\widetilde{B}\right )\partial_{x}\widetilde{\rho} \\
&& - \int_{\TT} \left ( \partial_{x}B - \partial_{x}\widetilde{B}\right ) \left (\rho^{\gamma-2}-{\widetilde{\rho}}^{\gamma-2}\right )\widetilde{B}\partial_{x}\widetilde{\rho} \\
&& - \int_{\TT} \left ( \partial_{x}B - \partial_{x}\widetilde{B}\right ) \frac{B^{2}+B_{0}^{2}}{\rho}\left (\partial_{x}B-\partial_{x}\widetilde{B}\right ) \\
&& - \int_{\TT} \left ( \partial_{x}B - \partial_{x}\widetilde{B}\right ) \frac{\left (B-\widetilde{B}\right )\left (B+\widetilde{B}\right )}{\rho}\partial_{x}\widetilde{B} \\
&& - \int_{\TT} \left ( \partial_{x}B - \partial_{x}\widetilde{B}\right ) \frac{\widetilde{B}^{2}+B_{0}^{2}}{\rho\widetilde{\rho}}\partial_{x}\widetilde{B}\left (\widetilde{\rho}-\rho\right ) \\
&=:& I_{1}+I_{2}+I_{3}+I_{4}+I_{5}+I_{6}+I_{7}+I_{8}+I_{9}+I_{10}+I_{11}.
\end{IEEEeqnarray*}
We can bound each term by
\begin{IEEEeqnarray*}{rCl}
I_{1} &\le & C_{\gamma}\left ( \eta^{-1}+\|\rho\|_{W^{2, \infty}}+\|B\|_{W^{2, \infty}}\right )^{C_{\gamma}} \|\rho-\widetilde{\rho}\|_{L^{2}}^{2}, \\
I_{2} & =& -\int_{\TT}(\gamma-2)\rho^{\gamma-3}\partial_{x}\rho \left (\rho-\widetilde{\rho}\right )\rho^{\gamma-1}\left (\partial_{x}\rho-\partial_{x}\widetilde{\rho}\right ) - \int_{\TT} (\gamma-2)\rho^{\gamma-3}\partial_{x}\rho \left (\rho-\widetilde{\rho}\right ) \left (\rho^{\gamma-1}-{\widetilde{\rho}}^{\gamma-1}\right )\partial_{x}\widetilde{\rho} \\
&=& \int_{\TT} \frac{\gamma-2}{2} \partial_{x}\left ( \rho^{2\gamma-4}\partial_{x}\rho\right ) \left (\rho-\widetilde{\rho}\right )^{2} - \int_{\TT}(\gamma-2)\rho^{\gamma-3}\partial_{x}\rho \partial_{x}\widetilde{\rho} \frac{\rho^{\gamma-1}-{\widetilde{\rho}}^{\gamma-1}}{\rho-\widetilde{\rho}} \left (\rho-\widetilde{\rho}\right )^{2} \\
& \le & C_{\gamma} \left (\eta^{-1}+\|\rho\|_{W^{2, \infty}} + \|\widetilde{\rho}\|_{W^{1, \infty}}\right )^{C_{\gamma}}\|\rho-\widetilde{\rho}\|_{L^{2}}^{2}, \\
I_{3} &=& -\int_{\TT}(\gamma-2)\rho^{\gamma-3}\partial_{x}\rho \left (\rho-\widetilde{\rho}\right )B\left (\partial_{x}B-\partial_{x}\widetilde{B}\right ) - \int_{\TT} (\gamma-2)\rho^{\gamma-3}\partial_{x}\rho \left (\rho-\widetilde{\rho}\right ) \left (B-\widetilde{B}\right )\partial_{x}\widetilde{B} \\
& \le & C_{\gamma}\left (\eta^{-1}+\|\rho\|_{W^{1, \infty}}+\|B\|_{L^{\infty}}+\|\widetilde{B}\|_{W^{1, \infty}}\right )^{C_{\gamma}}\|\rho-\widetilde{\rho}\|_{L^{2}}\left (\|B-\widetilde{B}\|_{L^{2}}+\|\partial_{x}B-\partial_{x}\widetilde{B}\|_{L^{2}} \right), \\
I_{4} &=& -\int_{\TT} \rho^{\gamma-2}\left (\partial_{x}\rho-\partial_{x}\widetilde{\rho}\right )^{2}\rho^{\gamma-1} - \int_{\TT} \left (\rho^{\gamma-2}\partial_{x}\rho-{\widetilde{\rho}}^{\gamma-2}\partial_{x}\widetilde{\rho}\right )\left (\rho^{\gamma-1}-\widetilde{\rho}^{\gamma-1}\right )\partial_{x}\widetilde{\rho} \\
&& +\int_{\TT} \left (\rho^{\gamma-2}-\widetilde{\rho}^{\gamma-2}\right )\partial_{x}\widetilde{\rho}\left (\rho^{\gamma-1}-\widetilde{\rho}^{\gamma-1}\right )\partial_{x}\widetilde{\rho} \\
& =& -\int_{\TT} \rho^{2\gamma-3}\left (\partial_{x}\rho-\partial_{x}\widetilde{\rho}\right )^{2} + \int_{\TT} \frac{1}{2(\gamma-1)} \left (\rho^{\gamma-1}-{\widetilde{\rho}}^{\gamma-1}\right )^{2} \partial_{x}^{2}\widetilde{\rho}\\
&& +\int_{\TT} \frac{\rho^{\gamma-1}-\widetilde{\rho}^{\gamma-1}}{\rho-\widetilde{\rho}} \frac{\rho^{\gamma-2}-\widetilde{\rho}^{\gamma-2}}{\rho-\widetilde{\rho}}\big (\partial_{x}\widetilde{\rho}\big )^{2}\left (\rho-\widetilde{\rho}\right )^{2} \\
& \le & -\int_{\TT} \rho^{2\gamma-3}\left (\partial_{x}\rho-\partial_{x}\widetilde{\rho}\right )^{2} + C_{\gamma} \left ( \eta^{-1}+\|\widetilde{\rho}\|_{W^{2, \infty}}\right )^{C_{\gamma}}\|\rho-\widetilde{\rho}\|_{L^{2}}^{2}, \\
I_{5} & =& -\int_{\TT}\rho^{\gamma-2}\left (\partial_{x}\rho-\partial_{x}\widetilde{\rho}\right )B \left (\partial_{x}B-\partial_{x}\widetilde{B}\right ) - \int_{\TT} \rho^{\gamma-2}\left (\partial_{x}\rho-\partial_{x}\widetilde{\rho}\right )\left (B-\widetilde{B}\right )\partial_{x}\widetilde{B} \\
&=& -\int_{\TT}\rho^{\gamma-2}\left (\partial_{x}\rho-\partial_{x}\widetilde{\rho}\right )B \left (\partial_{x}B-\partial_{x}\widetilde{B}\right ) + \int_{\TT} \rho^{\gamma-2}\left (\rho-\widetilde{\rho}\right )\left (\partial_{x}B-\partial_{x}\widetilde{B}\right )\partial_{x}\widetilde{B}\\
&& + \int_{\TT} \left (\rho-\widetilde{\rho}\right )\left (B-\widetilde{B}\right )\partial_{x}\left (\rho^{\gamma-2}\partial_{x}\widetilde{B}\right ) \\
& \le & -\int_{\TT}\rho^{\gamma-2}B\left (\partial_{x}\rho-\partial_{x}\widetilde{\rho}\right ) \left (\partial_{x}B-\partial_{x}\widetilde{B}\right ) \\
&& +C_{\gamma} \left ( \eta^{-1} + \|\rho\|_{W^{1, \infty}} + \|\widetilde{B}\|_{W^{2, \infty}} \right )^{C_{\gamma}} \|\rho-\widetilde{\rho}\|_{L^{2}}\left (\|B-\widetilde{B}\|_{L^{2}}+\|\partial_{x}B-\partial_{x}\widetilde{B}\|_{L^{2}}\right ), \\
I_{6} &=& -\int_{\TT} \rho^{\gamma-2}B\left (\partial_{x}\rho-\partial_{x}\widetilde{\rho}\right )\left (\partial_{x}B-\partial_{x}\widetilde{B}\right ), \\
I_{7} &=& \frac{1}{2} \int_{\TT} \partial_{x}\left (\rho^{\gamma-2}\partial_{x}\widetilde{\rho}\right )\left (B-\widetilde{B}\right )^{2} \le C_{\gamma}\left (\eta^{-1} + \|\widetilde{\rho}\|_{W^{1, \infty}}\right )^{C_{\gamma}}\|B-\widetilde{B}\|_{L^{2}}^{2}, \\
I_{8} &\le & \|\partial_{x}B-\partial_{x}\widetilde{B}\|_{L^{2}} \|\rho-\widetilde{\rho}\|_{L^{2}} C_{\gamma}\left (\eta^{-1}+\|\widetilde{\rho}\|_{W^{1, \infty}} + \|\widetilde{B}\|_{L^{\infty}}\right )^{C_{\gamma}}, \\
I_{9} & =& -\int_{\TT} \frac{B^{2}+B_{0}^{2}}{\rho} \left (\partial_{x}B-\partial_{x}\widetilde{B}\right )^{2}, \\
I_{10} &=& \frac{1}{2}\int_{\TT} \left (B-\widetilde{B}\right )^{2}\partial_{x}\left ( \frac{B+\widetilde{B}}{\rho}\partial_{x}\widetilde{B} \right ) \le C_{\gamma}\left (\eta^{-1}+\|\rho\|_{W^{1, \infty}} + \|B\|_{W^{1, \infty}} + \|\widetilde{B}\|_{W^{2, \infty}}\right )^{C_{\gamma}}\|B-\widetilde{B}\|_{L^{2}}^{2}, \\
I_{11} &\le &\|\partial_{x}B-\partial_{x}\widetilde{B}\|_{L^{2}} \|\rho-\widetilde{\rho}\|_{L^{2}} C_{B_{0}}\left ( \eta^{-1} + \|\widetilde{B}\|_{W^{1, \infty}} \right )^{C}.
\end{IEEEeqnarray*}
Hence,
\begin{IEEEeqnarray*}{rCl}
\IEEEeqnarraymulticol{3}{l}{\partial_{t}\frac{1}{2}\int_{\TT}\left (\rho^{\gamma-2}\left |\rho-\widetilde{\rho}\right |^{2}+\left |B-\widetilde{B}\right |^{2}\right )} \\
&\le & -\int_{\TT} \rho^{2\gamma-3}\left (\partial_{x}\rho-\partial_{x}\widetilde{\rho}\right )^{2} - \int_{\TT} 2\rho^{\gamma-2}B\left (\partial_{x}\rho-\partial_{x}\widetilde{\rho}\right )\left (\partial_{x}B-\partial_{x}\widetilde{B}\right ) - \int_{\TT} \frac{B^{2}+B_{0}^{2}}{\rho} \left (\partial_{x}B-\partial_{x}\widetilde{B}\right )^{2} \\
&& + C_{\gamma, B_{0}}\left (\eta^{-1}+\|\rho\|_{W^{2, \infty}}+\|\widetilde{\rho}\|_{W^{2, \infty}}+\|B\|_{W^{2, \infty}}+\|\widetilde{B}\|_{W^{2, \infty}}\right )^{C_{\gamma}}  \left ( \|\rho-\widetilde{\rho}\|_{L^{2}}^{2}+\|B-\widetilde{B}\|_{L^{2}}^{2}\right ) \\
&& + \frac{1}{2}\eta B_{0}^{2}\|\partial_{x}B-\partial_{x}\widetilde{B}\|_{L^{2}}^{2} \\
& \le & -\int_{\TT}\left [ \rho^{\gamma-\frac{3}{2}}\left (\partial_{x}\rho-\partial_{x}\widetilde{\rho}\right )+\rho^{-\frac{1}{2}}B\left (\partial_{x}B-\partial_{x}\widetilde{B}\right )\right ]^{2} -\int_{\TT} \frac{B_{0}^{2}}{2\rho}\left (\partial_{x}B-\partial_{x}\widetilde{B}\right )^{2} \\
&& + C_{\gamma, B_{0}} \left (\eta^{-1}+\|\rho\|_{W^{2, \infty}}+\|\widetilde{\rho}\|_{W^{2, \infty}}+\|B\|_{W^{2, \infty}} + \|\widetilde{B}\|_{W^{2, \infty}}\right )^{C_{\gamma}} \eta^{-C_{\gamma}}\left ( \|\rho^{\frac{\gamma}{2}-1}\left (\rho-\widetilde{\rho}\right )\|_{L^{2}}^{2} + \|B-\widetilde{B}\|_{L^{2}}^{2}\right ).
\end{IEEEeqnarray*}
Since $W^{2, \infty}$ norms of $(\rho, B)$ and $(\widetilde{\rho}, \widetilde{B})$ are bounded (by $H^{3}$ norm of them), we conclude by Gr\"onwall's inequality that \[
\int_{\TT}\left (\rho(t)^{\gamma-2}\left |\rho(t)-\widetilde{\rho}(t)\right |^{2}+\left |B(t)-\widetilde{B}(t)\right |^{2}\right ) =0.
\] Hence $(\rho, B)=(\widetilde{\rho}, \widetilde{B})$, and this proves the uniqueness part of Theorem \ref{20260110thm1}.

\section{Relaxation to constant steady state}\label{sec3}

\subsection{Decay of $H^{s}$ norm}\label{sec3.1}
Let $(\rho, B)$ be a smooth solution to the system \eqref{20260110eq2}. By Theorem \ref{20260110thm1}, there exists $\eta=\eta(\gamma, B_{0}, \rho_{t=0}, B_{t=0})>0$ such that $\eta \le \rho \le \eta^{-1}$. Let $s \ge 1$ be any integer. Then {\small \begin{IEEEeqnarray*}{rCl}
\IEEEeqnarraymulticol{3}{l}{\partial_{t}\frac{1}{2}\int_{\TT}\left |\partial_{x}^{s}\rho \right |^{2}} \\
&=& \int_{\TT}-\partial_{x}^{s+1}\rho\partial_{x}^{s+1}\left ( \frac{1}{\gamma}\rho^{\gamma}+\frac{1}{2}B^{2}\right ) \\
&=& \int_{\TT} -\rho^{\gamma-1}\big(\partial_{x}^{s+1}\rho\big)^{2} + \partial_{x}^{s+1}\rho \left (\sum_{\substack{k \ge 1, \\ 1 \le i_{1}, \ldots, i_{k} \le s, \\ i_{1}+\cdots +i_{k}=s+1, \\ \alpha+k=\gamma}}C_{\gamma, s, \alpha, i_{1}, \ldots, i_{k}}\rho^{\alpha}\partial_{x}^{i_{1}}\rho\cdots \partial_{x}^{i_{k}}\rho + \sum_{\substack{0 \le j_{1}, j_{2}\le s+1, \\j_{1}+j_{2}=s+1}} C_{s, j_{1}, j_{2}}\partial_{x}^{j_{1}}B\partial_{x}^{j_{2}}B \right ) \\
& \le & \int_{\TT}-\frac{1}{2}\rho^{\gamma-1}\big(\partial_{x}^{s+1}\rho\big)^{2} + \frac{1}{2}\rho^{1-\gamma}\left (\sum_{\substack{k \ge 1, \\ 1 \le i_{1}, \ldots, i_{k} \le s, \\ i_{1}+\cdots +i_{k}=s+1, \\ \alpha+k=\gamma}}C_{\gamma, s, \alpha, i_{1}, \ldots, i_{k}}\rho^{\alpha}\partial_{x}^{i_{1}}\rho\cdots \partial_{x}^{i_{k}}\rho + \sum_{\substack{0 \le j_{1}, j_{2}\le s+1, \\j_{1}+j_{2}=s+1}} C_{s, j_{1}, j_{2}}\partial_{x}^{j_{1}}B\partial_{x}^{j_{2}}B \right )^{2} \\
& \le &  \int_{\TT}-\frac{1}{2}\rho^{\gamma-1}\big(\partial_{x}^{s+1}\rho\big)^{2}+\sum_{\substack{k \ge 1, \\ 1 \le i_{1}, \ldots, i_{k} \le s, \\ i_{1}+\cdots +i_{k}=s+1, \\ \alpha+k=\gamma}}C_{\gamma, s, \alpha, i_{1}, \ldots, i_{k}}\rho^{2\alpha-\gamma+1}\big(\partial_{x}^{i_{1}}\rho\big)^{2}\cdots \big(\partial_{x}^{i_{k}}\rho\big)^{2} \\
&& + \sum_{\substack{0 \le j_{1}, j_{2}\le s+1, \\j_{1}+j_{2}=s+1}} C_{\gamma, s, j_{1}, j_{2}}\rho^{1-\gamma}\big(\partial_{x}^{j_{1}}B\big)^{2}\big(\partial_{x}^{j_{2}}B\big)^{2}.
\end{IEEEeqnarray*}}
Similarly as in the proof of Lemma \ref{20260110lem1}, for any $(\alpha, i_{1}, \ldots, i_{k})$ satisfying $k \ge 1$, $1 \le i_{1}, \ldots, i_{k}\le s$, $i_{1}+\cdots +i_{k}=s+1$, and $\alpha+k=\gamma$, we have \begin{IEEEeqnarray*}{rCl}
\int_{\TT} \rho^{2\alpha-\gamma+1}\big(\partial_{x}^{i_{1}}\rho\big)^{2}\cdots \big(\partial_{x}^{i_{k}}\rho\big)^{2} &\le& \left ( \eta^{-1}+\|\rho\|_{L^{\infty}}\right )^{|2\alpha-\gamma+1|}\|\partial_{x}^{s}\rho\|_{L^{2}}^{2}\|\partial_{x}^{\lfloor\frac{s+1}{2}\rfloor}\rho\|_{L^{\infty}}^{2}\|\rho\|_{W^{\lfloor\frac{s+1}{2}\rfloor, \infty}}^{2k-4} \\
&\le& \|\partial_{x}^{s+1}\rho\|_{L^{2}}^{2} \|\partial_{x}^{\lfloor\frac{s+3}{2}\rfloor}\rho\|_{L^{2}}^{2}\left ( \eta^{-1}+\|\rho\|_{H^{\lfloor\frac{s+3}{2}\rfloor}}\right )^{C_{\gamma, s}},
\end{IEEEeqnarray*}
and for any $(j_{1}, j_{2})$ satisfying $0 \le j_{1}, j_{2} \le s+1$ and $j_{1}+j_{2}=s+1$, we have that \begin{IEEEeqnarray*}{rCl}
\int_{\TT} \rho^{1-\gamma}\big(\partial_{x}^{j_{1}}B\big)^{2}\big(\partial_{x}^{j_{2}}B\big)^{2} &\le& \left ( \eta^{-1}+\|\rho\|_{L^{\infty}}\right )^{|\gamma-1|} \left (\|\partial_{x}^{s}B\|_{L^{2}}^{2}\|\partial_{x}^{\lfloor \frac{s+1}{2}\rfloor}B\|_{L^{\infty}}^{2} + \|\partial_{x}^{s+1}B\|_{L^{2}}^{2}\|B\|_{L^{\infty}}^{2}\right ) \\
& \le & \|\partial_{x}^{s+1}B\|_{L^{2}}^{2}\left (\|\partial_{x}^{\lfloor \frac{s+3}{2}\rfloor}B\|_{L^{2}}^{2}+\|B\|_{L^{\infty}}^{2}\right ) \left ( \eta^{-1}+\|\rho\|_{H^{\lfloor\frac{s+3}{2}\rfloor}}\right )^{C_{\gamma}}.
\end{IEEEeqnarray*}
Hence, \begin{IEEEeqnarray*}{rCl}
\IEEEeqnarraymulticol{3}{l}{\partial_{t}\left (\|\partial_{x}^{s}\rho\|_{L^{2}}^{2}\right ) + \|\rho^{\frac{\gamma-1}{2}}\partial_{x}^{s+1}\rho\|_{L^{2}}^{2}} \\
&\le& C_{\gamma, s}\left ( \|\partial_{x}^{s+1}\rho\|_{L^{2}}^{2}+\|\partial_{x}^{s+1}B\|_{L^{2}}^{2}\right ) \left ( \|\partial_{x}^{\lfloor\frac{s+3}{2}\rfloor}\rho\|_{L^{2}}+\|B\|_{H^{\lfloor\frac{s+3}{2}\rfloor}}\right )^{2} \left ( \eta^{-1} + \|\rho\|_{H^{\lfloor\frac{s+3}{2}\rfloor}}+\|B\|_{H^{\lfloor\frac{s+3}{2}\rfloor}}\right )^{C_{\gamma, s}}.
\end{IEEEeqnarray*} By Poincar\'e's inequality, \[
\|\partial_{x}^{s}\rho\|_{L^{2}} \le C\|\partial_{x}^{s+1}\rho\|_{L^{2}} \le C\|\rho^{\frac{\gamma-1}{2}}\partial_{x}^{s+1}\rho\|_{L^{2}}\|\rho^{-1}\|_{L^{\infty}}^{\frac{\gamma-1}{2}} \le C\|\rho^{\frac{\gamma-1}{2}}\partial_{x}^{s+1}\rho\|_{L^{2}}\eta^{-C_{\gamma}}.
\]Hence, for some constant $\sigma_{1}=\sigma_{1}(\gamma, B_{0}, \rho_{t=0}, B_{t=0})>0$, we have {\small\begin{IEEEeqnarray*}{rCl}
\IEEEeqnarraymulticol{3}{l}{\partial_{t}\left (\|\partial_{x}^{s}\rho\|_{L^{2}}^{2}\right )+\sigma_{1}\|\partial_{x}^{s}\rho\|_{L^{2}}^{2}} \\
&\le& C_{\gamma, s}\left ( \|\partial_{x}^{s+1}\rho\|_{L^{2}}^{2}+\|\partial_{x}^{s+1}B\|_{L^{2}}^{2}\right ) \left ( \|\partial_{x}^{\lfloor\frac{s+3}{2}\rfloor}\rho\|_{L^{2}}+\|B\|_{H^{\lfloor\frac{s+3}{2}\rfloor}}\right )^{2} \left ( \eta^{-1} + \|\rho\|_{H^{\lfloor\frac{s+3}{2}\rfloor}}+\|B\|_{H^{\lfloor\frac{s+3}{2}\rfloor}}\right )^{C_{\gamma, s}}. \yesnumber \label{20260111eq3}
\end{IEEEeqnarray*}}
Similarly,
\begin{IEEEeqnarray*}{rCl}
\IEEEeqnarraymulticol{3}{l}{\partial_{t}\frac{1}{2}\int_{\TT} |\partial_{x}^{s}B|^{2}} \\
&=& \int_{\TT} -\partial_{x}^{s+1}B \partial_{x}^{s}\left ( \frac{B}{\rho}(\rho^{\gamma-1}\partial_{x}\rho+B\partial_{x}B) + \frac{B_{0}^{2}}{\rho}\partial_{x}B\right ) \\
&=& \int_{\TT} -\frac{B^{2}+B_{0}^{2}}{\rho}\big(\partial_{x}^{s+1}B\big)^{2} -\rho^{\gamma-2}B \partial_{x}^{s+1}\rho\partial_{x}^{s+1}B\\
&& + \partial_{x}^{s+1}B \sum_{\substack{k, \ell \ge 0, \\ 1 \le i_{1}, \ldots, i_{k} \le s, \\ 0 \le j_{1}, \ldots, j_{\ell} \le s, \\ i_{1}+\cdots +i_{k}+j_{1}+\cdots +j_{\ell}=s+1, \\ \alpha+k+\frac{\gamma}{2}\ell=\frac{3}{2}\gamma-1}} C_{\gamma, s, \alpha, i_{1}, \ldots, i_{k}, j_{1}, \ldots, j_{\ell}} \rho^{\alpha}\partial_{x}^{i_{1}}\rho \cdots \partial_{x}^{i_{k}}\rho\partial_{x}^{j_{1}}B\cdots \partial_{x}^{j_{\ell}}B \\
&& + B_{0}^{2}\partial_{x}^{s+1}B \sum_{\substack{k, \ell \ge 0, \\ 1 \le i_{1}, \ldots,  i_{k} \le s, \\ 0 \le j_{1}, \ldots, j_{\ell} \le s, \\ i_{1}+\cdots +i_{k}+j_{1}+\cdots +j_{\ell}=s+1, \\ \alpha+k=-1, \ell=1}}C_{\gamma, s, \alpha, i_{1}, \ldots, i_{k}, j_{1}, \ldots, j_{\ell}}\rho^{\alpha}\partial_{x}^{i_{1}}\rho\cdots \partial_{x}^{i_{k}}\rho\partial_{x}^{j_{1}}B\cdots \partial_{x}^{j_{\ell}}B \\
& \le & -\int_{\TT} \frac{B_{0}^{2}}{2\rho}\big(\partial_{x}^{s+1}B\big)^{2}\\
&& +C_{\gamma, B_{0}, s} \left ( \|\partial_{x}^{s+1}\rho\|_{L^{2}}^{2}+\|\partial_{x}^{s+1}B\|_{L^{2}}^{2}\right ) \left ( \|\partial_{x}^{\lfloor\frac{s+3}{2}\rfloor}\rho\|_{L^{2}}+\|B\|_{H^{\lfloor\frac{s+3}{2}\rfloor}}\right )^{2}\left ( \eta^{-1} + \|\rho\|_{H^{\lfloor\frac{s+3}{2}\rfloor}}+\|B\|_{H^{\lfloor \frac{s+3}{2}\rfloor}}\right )^{C_{\gamma, s}},
\end{IEEEeqnarray*} and again applying Poincar\'e's inequality, we conclude that {\small\begin{IEEEeqnarray*}{rCl}
\IEEEeqnarraymulticol{3}{l}{\partial_{t}\left ( \|\partial_{x}^{s}B\|_{L^{2}}^{2}\right ) + \sigma_{2} \|\partial_{x}^{s}B\|_{L^{2}}^{2}} \\
&\le& C_{\gamma, B_{0}, s} \left ( \|\partial_{x}^{s+1}\rho\|_{L^{2}}^{2}+\|\partial_{x}^{s+1}B\|_{L^{2}}^{2}\right ) \left ( \|\partial_{x}^{\lfloor\frac{s+3}{2}\rfloor}\rho\|_{L^{2}}+\|B\|_{H^{\lfloor\frac{s+3}{2}\rfloor}}\right )^{2}\left ( \eta^{-1} + \|\rho\|_{H^{\lfloor\frac{s+3}{2}\rfloor}}+\|B\|_{H^{\lfloor \frac{s+3}{2}\rfloor}}\right )^{C_{\gamma, s}} \yesnumber \label{20260111eq4}
\end{IEEEeqnarray*}}
for some constant $\sigma_{2}=\sigma_{2}(\gamma, B_{0}, \rho_{t=0}, B_{t=0})>0$.

\subsection{Proof of Theorem \ref{20260110thm2} for the case $\overline{B}=0$} \label{subsec3.2}

In this proof, $A_{1}(\cdot), A_{2}(\cdot), \dots$ denote positive constants whose values remain fixed throughout. The variables within the parentheses indicate the parameters on which each constant depends. 

Assume that initial data $(\rho_{t=0}, B_{t=0})$ satisfy \[
\|\rho_{t=0}-\overline{\rho}\|_{H^{s}} + \|B_{t=0}\|_{H^{s}} \le \epsilon, \quad \fint_{\TT}\rho_{t=0}=\overline{\rho}, \quad \fint_{\TT}B_{t=0}=0,
\] and let $(\rho, B)$ be a smooth solution to the system \eqref{20260110eq2} defined on a maximal time interval $[0, T)$. ($T$ may be equal to $\infty$.) Here, $\epsilon>0$ will be chosen later. Assume that $\epsilon$ is small enough so that \[
\frac{1}{2}\overline{\rho} \le \rho_{t=0} \le 2\overline{\rho}, \quad |B_{t=0}| \le 1.
\] Then $W(\rho_{t=0}, B_{t=0})$ and $Z(\rho_{t=0}, B_{t=0})$ has a uniform upper bound depending only on $\gamma, B_{0}$, and $\overline{\rho}$ (but not on $\rho_{t=0}$ and $B_{t=0}$), and so by repeating the argument in Section \ref{subsubsec2.2.3} we see that there exists $\eta=\eta(\gamma, B_{0}, \overline{\rho}, \overline{B})>0$, not depending on $\rho_{t=0}$ and $B_{t=0}$, satisfying $\rho, \rho^{-1}, |B| \le \eta^{-1}$. Define $\sigma = \frac{1}{2}\min\{\sigma_{1}, \sigma_{2}\}$, where $\sigma_{1}$ and $\sigma_{2}$ are defined in \eqref{20260111eq3} and \eqref{20260111eq4}. Recall that the dependence of $\sigma_{1}$ and $\sigma_{2}$ on $(\rho_{t=0}, B_{t=0})$ was inherited from $\eta$. Since we have chosen $\eta$ to depend on $\overline{\rho}$ rather than $(\rho_{t=0}, B_{t=0})$, we can ensure that $\sigma$ depends only on $\gamma, B_{0}$, and $\overline{\rho}$.

Let $A_{1}>1$ be a constant, to be chosen later. Let $s \ge 6$ be a given integer. We claim that the following four inequalities hold for all time $t \in [0, T)$:
\begin{itemize}
\item[(C1)] $\|\rho(t)-\overline{\rho}\|_{H^{s}} \le A_{1}\epsilon$,
\item[(C2)] $\|B(t)\|_{H^{s}} \le A_{1}\epsilon$,
\item[(C3)] $\|\rho(t)-\overline{\rho}\|_{H^{s-2}} \le A_{1}\epsilon e^{-\sigma t}$,
\item[(C4)] $\|B(t)\|_{H^{s-2}} \le A_{1}\epsilon e^{-\sigma t}$,
\end{itemize} assuming that the following four inequalities hold for all time $t \in [0, T)$:
\begin{itemize}
\item[(A1)] $\|\rho(t)-\overline{\rho}\|_{H^{s}} \le 2A_{1}\epsilon$,
\item[(A2)] $\|B(t)\|_{H^{s}} \le 2A_{1}\epsilon$,
\item[(A3)] $\|\rho(t)-\overline{\rho}\|_{H^{s-2}} \le 2A_{1}\epsilon e^{-\sigma t}$,
\item[(A4)] $\|B(t)\|_{H^{s-2}} \le 2A_{1}\epsilon e^{-\sigma t}$.
\end{itemize} 

To prove (C1) and (C2), we start with Poincar\'e's inequality and Corollary \ref{20260110cor1}:
{\small \begin{IEEEeqnarray*}{rCl}
\IEEEeqnarraymulticol{3}{l}{\|\rho(t)-\overline{\rho}\|^{2}_{H^{s}} + \|B(t)\|_{H^{s}}^{2}} \\
&\le& C_{s} \|\partial_{x}^{s}\rho(t)\|_{L^{2}}^{2} + C_{s}\|\partial_{x}^{s}B(t)\|_{L^{2}}^{2} \\
& \le & C_{s} \left (\|\partial_{x}^{s}\rho_{t=0}\|_{L^{2}}^{2} + \|\partial_{x}^{s}B_{t=0}\|_{L^{2}}^{2}\right ) \left (\eta^{-1} +\|\rho_{t=0}\|_{H^{\lfloor\frac{s+3}{2}\rfloor}}+ \|\rho(t)\|_{H^{\lfloor\frac{s+3}{2}\rfloor}}\right )^{C_{\gamma}} \\
&& \exp\left ( C_{\gamma, B_{0}, s}\int_{0}^{t} \left ( \|\partial_{x}^{\lfloor\frac{s+3}{2}\rfloor}\rho(\tau)\|_{L^{2}}+\|\partial_{x}^{\lfloor\frac{s+3}{2}\rfloor}B(\tau)\|_{L^{2}}\right )\left ( \eta^{-1} +\|\rho(\tau)\|_{H^{\lfloor\frac{s+3}{2}\rfloor}}+\|B(\tau)\|_{H^{\lfloor\frac{s+3}{2}\rfloor}} \right )^{C_{\gamma, s}}\dd\tau\right ). \yesnumber \label{20260118eq1}
\end{IEEEeqnarray*}}
Observe that by Poincar\'e's inequality and assumptions (A1)-(A4),
{\small \begin{IEEEeqnarray*}{rCl}
\|\rho_{t=0}\|_{H^{\lfloor\frac{s+3}{2}\rfloor}} &\le & \|\rho_{t=0}\|_{H^{s}} \le C_{s}\|\rho_{t=0}-\overline{\rho}\|_{H^{s}}+C_{s}\|\rho_{t=0}\|_{L^{2}} \le C_{s}A_{1}\epsilon + C_{s}\|\rho_{t=0}\|_{L^{\infty}} \le C_{s}A_{1}\epsilon + C_{s}\eta^{-1}, \yesnumber \label{20260118eq2} \\
\|\rho(t)\|_{H^{\lfloor\frac{s+3}{2}\rfloor}} & \le & C_{s}A_{1}\epsilon + C_{s}\eta^{-1}, \yesnumber \label{20260118eq3}\\
\|\partial_{x}^{\lfloor\frac{s+3}{2}\rfloor}\rho(\tau)\|_{L^{2}} &\le& C_{s} \|\partial_{x}^{s-2}\rho(\tau)\|_{L^{2}} \le C_{s} \|\rho(\tau)-\overline{\rho}\|_{H^{s-2}} \le C_{s} A_{1}\epsilon e^{-\sigma t}, \yesnumber \label{20260118eq4}\\
\|\partial_{x}^{\lfloor\frac{s+3}{2}\rfloor}B\|_{L^{2}} &\le & C_{s}A_{1}\epsilon e^{-\sigma t}. \yesnumber \label{20260118eq5}
\end{IEEEeqnarray*}}
(Note that we have used $s-2 \ge \lfloor\frac{s+3}{2}\rfloor$, which holds for $s \ge 6$.) Applying these inequalities to \eqref{20260118eq1} gives
\begin{IEEEeqnarray*}{rCl}
\IEEEeqnarraymulticol{3}{l}{\|\rho(t)-\overline{\rho}\|_{H^{s}}^{2}+\|B(t)\|_{H^{s}}^{2}} \\
& \le & C_{s}\epsilon^{2} \left (C_{s}A_{1}\epsilon + C_{s}\eta^{-1} \right )^{C_{\gamma}} \exp \left (C_{\gamma, B_{0}, s}\int_{0}^{t} C_{s}A_{1}\epsilon e^{-\sigma \tau} \left (C_{s}A_{1}\epsilon+C_{s}\eta^{-1}\right )^{C_{\gamma, s}}\dd\tau \right ) \\
& \le & C_{\gamma, s}\epsilon^{2}\left (\eta^{-1}+A_{1}\epsilon\right )^{C_{\gamma}}\exp\left ( C_{\gamma, B_{0}, s}\int_{0}^{t} A_{1}\epsilon e^{-\sigma\tau}\left (\eta^{-1}+A_{1}\epsilon\right )^{C_{\gamma, s}}\dd\tau \right ) \\
& \le & A_{2}(\gamma, s)(\eta^{-1}+A_{1}\epsilon)^{A_{3}(\gamma)}\epsilon^{2} \exp \left ( A_{4}(\gamma, B_{0}, s)A_{1}\epsilon \frac{1}{\sigma} \left (\eta^{-1}+A_{1}\epsilon\right )^{A_{5}(\gamma, s)}\right ). \yesnumber \label{20260112eq1}
\end{IEEEeqnarray*}
Now choose $A_{1}=A_{1}(\gamma, B_{0}, \overline{\rho}, s)$ large enough such that \[
A_{2}(2\eta^{-1})^{A_{3}} \exp \left ( A_{4}\eta^{-1}\frac{1}{\sigma}(2\eta^{-1})^{A_{5}}\right ) \le A_{1}^{2},
\] and choose $\epsilon=\epsilon( \gamma, B_{0}, \overline{\rho}, s)$ small enough such that \[
A_{1}\epsilon < \eta^{-1}, \quad 2A_{1}\epsilon < \frac{1}{2}\overline{\rho}.
\] Then, \eqref{20260112eq1} reads as \[
\|\rho(t)-\overline{\rho}\|_{H^{s}}^{2}+\|B(t)\|_{H^{s}}^{2} \le A_{1}^{2}\epsilon^{2},
\] and thus (C1) and (C2) hold.

To prove (C3), observe that from \eqref{20260111eq3} we have
\begin{IEEEeqnarray*}{rCl}
\IEEEeqnarraymulticol{3}{l}{\|\partial_{x}^{s-2}\rho\|_{L^{2}}^{2}} \\
&\le& \|\partial_{x}^{s-2}\rho_{t=0}\|_{L^{2}}^{2}e^{-2\sigma t} \\
&& + e^{-2\sigma t}\int_{0}^{t} e^{2\sigma \tau} C_{\gamma, s}\left ( \|\partial_{x}^{s-1}\rho(\tau)\|_{L^{2}}^{2}+\|\partial_{x}^{s-1}B(\tau)\|_{L^{2}}^{2}\right ) \left ( \|\partial_{x}^{\lfloor\frac{s+1}{2}\rfloor}\rho(\tau)\|_{L^{2}}+\|B(\tau)\|_{H^{\lfloor\frac{s+1}{2}\rfloor}}\right )^{2} \\
&& \qquad\left ( \eta^{-1} + \|\rho(\tau)\|_{H^{\lfloor\frac{s+1}{2}\rfloor}}+\|B(\tau)\|_{H^{\lfloor\frac{s+1}{2}\rfloor}}\right )^{C_{\gamma, s}}\dd\tau \\
& \le & \epsilon^{2}e^{-2\sigma t} + e^{-2\sigma t}\int_{0}^{t}e^{2\sigma \tau}C_{\gamma, s}\left ( \|\rho(\tau)-\overline{\rho}\|_{H^{s}}\|\rho(\tau)-\overline{\rho}\|_{H^{s-2}} + \|B(\tau)\|_{H^{s}}\|B(\tau)\|_{H^{s-2}}\right ) \\
&& \qquad \qquad \qquad \qquad \left ( \|\rho(\tau)-\overline{\rho}\|_{H^{s-2}} + \|B(\tau)\|_{H^{s-2}}\right )^{2} \left ( \eta^{-1} + A_{1}\epsilon \right )^{C_{ \gamma, s}}\dd\tau \\
& \le & \epsilon^{2}e^{-2\sigma t}+e^{-2\sigma t}\int_{0}^{t}C_{\gamma, s} e^{2\sigma \tau} A_{1}^{2}\epsilon^{2}e^{-\sigma \tau} A_{1}^{2}\epsilon^{2}e^{-2\sigma \tau} \eta^{-C_{\gamma, s}}\dd\tau \\
& \le & \epsilon^{2}e^{-2\sigma t}\left ( 1+ \frac{1}{\sigma}A_{6}(\gamma, s)A_{1}^{4}\epsilon^{2}\eta^{-A_{7}(\gamma, s)}\right ).
\end{IEEEeqnarray*}
Thus by Poincar\'e's inequality, \[
\|\rho(t)-\overline{\rho}\|_{H^{s-2}}^{2} \le \epsilon^{2} e^{-2\sigma t} A_{8}(s)\left ( 1+\frac{1}{\sigma}A_{1}^{4}\epsilon^{2}A_{6}(\gamma, s)\eta^{-A_{7}(\gamma, s)}\right ).
\] Now, we could have chosen $A_{1}$ big enough so that \[
A_{1}^{2}>2A_{8},
\] and could have chosen $\epsilon$ small enough so that \[
\frac{1}{\sigma}A_{1}^{4}\epsilon^{2}A_{6}\eta^{-A_{7}} \le 1.
\]
Then, we obtain \[
\|\rho(t)-\overline{\rho}\|_{H^{s-2}} \le A_{1}\epsilon e^{-\sigma t},
\] which is the desired inequality (C3).

Similar argument using \eqref{20260111eq4} instead of \eqref{20260111eq3} gives (C4), possibly after further modifying $A_{1}$ and $\epsilon$.

Hence, (C1)-(C4) follow from (A1)-(A4). We now claim that (A1)-(A4) hold for all $t \in [0, T)$. These inequalities clearly hold at $t=0$, and by continuity, they remain valid on $[0, \delta]$ for some $\delta>0$. Suppose, for the sake of contradiction, that at least one of (A1)-(A4) fails at some time, and let $t_{0}\ge \delta>0$ be the infimum of such times. Continuity then implies that at least one of (A1)-(A4) must become an equality at $t=t_{0}$. However, our previous argument ensures that (C1)-(C4) hold for all $t<T$, in particular at $t=t_{0}$, which preculdes the possibility of equality in (A1)-(A4). This contradiction shows that (A1)-(A4) hold for all $t \in [0, T)$.

Moreover, as $s \ge 6$, (A1) and (A2) show that $\|\rho\|_{H^{4}}$ and $\|B\|_{H^{4}}$ are bounded by $2A_{1}\epsilon$, and thus by Theorem \ref{20260110thm1} we conclude that the maximal existence time $T$ is equal to infinity. Hence the solution $(\rho, B)$ is global in time. Now the decay estimates (A3) and (A4) show that $(\rho(t), B(t)) \to (\overline{\rho}, 0)$ in $H^{s-2}$, and this finishes the proof of Theorem \ref{20260110thm2} for the case $\overline{B}=0$. 

\subsection{Decay of coupled $H^{s}$ norm}\label{subsec3.3}

From now on, assume $\overline{B}\neq 0$.

Define $\zeta_{1}=\zeta_{1}(\gamma, B_{0}, \overline{\rho}, \overline{B})$, $\zeta_{2}=\zeta_{2}(\gamma, B_{0}, \overline{\rho}, \overline{B})$ be two solutions to the quadratic equation \[
\overline{B} \zeta^{2} + \frac{\overline{B}^{2}+B_{0}^{2}-\overline{\rho}^{\gamma}}{\overline{\rho}}\zeta - \overline{\rho}^{\gamma-2}\overline{B}=0,
\] namely, \[
\{\zeta_{1}, \zeta_{2}\} = \frac{-\left (\overline{B}^{2}+B_{0}^{2}-\overline{\rho}^{\gamma}\right ) \pm \sqrt{\left (\overline{B}^{2}+B_{0}^{2}-\overline{\rho}^{\gamma}\right )^{2}+4\overline{B}^{2}\overline{\rho}^{\gamma}}}{2\overline{B}\overline{\rho}}.
\]
Then for $\zeta \in \{\zeta_{1}, \zeta_{2}\}$, we have
{\small \begin{IEEEeqnarray*}{rCl}
\IEEEeqnarraymulticol{3}{l}{\partial_{t}\frac{1}{2}\int_{\TT}\left |\partial_{x}^{s}\left (\zeta \rho + B\right )\right |^{2}} \\
&=&\int_{\TT} \left ( \zeta \partial_{x}^{s}\rho + \partial_{x}^{s}B\right )\left ( \zeta \partial_{x}^{s+2}\left ( \frac{1}{\gamma}\rho^{\gamma}+\frac{1}{2}B^{2}\right )+\partial_{x}^{s+1}\left ( \frac{B^{2}+B_{0}^{2}}{\rho}\partial_{x}B + \rho^{\gamma-2}B\partial_{x}\rho\right )\right ) \\
&=&-\int_{\TT} \left ( \zeta \partial_{x}^{s+1}\rho + \partial_{x}^{s+1}B\right )\left ( \zeta \partial_{x}^{s+1}\left ( \frac{1}{\gamma}\rho^{\gamma}+\frac{1}{2}B^{2}\right )+\partial_{x}^{s}\left ( \frac{B^{2}+B_{0}^{2}}{\rho}\partial_{x}B + \rho^{\gamma-2}B\partial_{x}\rho\right )\right ) \\
&=&\int_{\TT} -\left ( \zeta \partial_{x}^{s+1}\rho + \partial_{x}^{s+1}B\right )\left ( \zeta \rho^{\gamma-1}\partial_{x}^{s+1}\rho + \zeta B\partial_{x}^{s+1}B + \frac{B^{2}+B_{0}^{2}}{\rho}\partial_{x}^{s+1}B + \rho^{\gamma-2}B\partial_{x}^{s+1}\rho\right ) \\
&& + \left ( \zeta \partial_{x}^{s+1}\rho + \partial_{x}^{s+1}B\right )[S] \\
&=& \int_{\TT} -\left ( \zeta \partial_{x}^{s+1}\rho + \partial_{x}^{s+1}B\right )\left ( \zeta \overline{\rho}^{\gamma-1}\partial_{x}^{s+1}\rho + \zeta \overline{B}\partial_{x}^{s+1}B + \frac{\overline{B}^{2}+B_{0}^{2}}{\overline{\rho}}\partial_{x}^{s+1}B + \overline{\rho}^{\gamma-2}\overline{B}\partial_{x}^{s+1}\rho\right ) \\
&& + \left ( \zeta \partial_{x}^{s+1}\rho + \partial_{x}^{s+1}B\right )[S-\zeta(\rho^{\gamma-1}-\overline{\rho}^{\gamma-1})\partial_{x}^{s+1}\rho -\zeta(B-\overline{B})\partial_{x}^{s+1}B-(B^{2}+B_{0}^{2})(\rho^{-1}-\overline{\rho}^{-1})\partial_{x}^{s+1}B \\
&&\qquad\qquad\qquad\qquad\qquad -(B-\overline{B})(B+\overline{B})\overline{\rho}^{-1}\partial_{x}^{s+1}B-\rho^{\gamma-2}(B-\overline{B})\partial_{x}^{s+1}\rho-(\rho^{\gamma-2}-\overline{\rho}^{\gamma-2})\overline{B}\partial_{x}^{s+1}\rho] \\
&=& \int_{\TT}- \left ( \zeta \overline{B} + \frac{\overline{B}^{2}+B_{0}^{2}}{\overline{\rho}}\right )\left ( \zeta \partial_{x}^{s+1}\rho + \partial_{x}^{s+1}B\right )^{2} \\
&& + \left ( \zeta \partial_{x}^{s+1}\rho + \partial_{x}^{s+1}B\right )[S-\zeta(\rho^{\gamma-1}-\overline{\rho}^{\gamma-1})\partial_{x}^{s+1}\rho -\zeta(B-\overline{B})\partial_{x}^{s+1}B-(B^{2}+B_{0}^{2})(\rho^{-1}-\overline{\rho}^{-1})\partial_{x}^{s+1}B \\
&&\qquad\qquad\qquad\qquad \qquad -(B-\overline{B})(B+\overline{B})\overline{\rho}^{-1}\partial_{x}^{s+1}B-\rho^{\gamma-2}(B-\overline{B})\partial_{x}^{s+1}\rho-(\rho^{\gamma-2}-\overline{\rho}^{\gamma-2})\overline{B}\partial_{x}^{s+1}\rho], \yesnumber \label{20260113eq1}
\end{IEEEeqnarray*}}
where $S$ is 
\begin{IEEEeqnarray*}{rCl}
S&=&\left ( \sum_{\substack{k, \ell \ge 0, \\ 1 \le i_{1}, \ldots, i_{k} \le s, \\ 0 \le j_{1}, \ldots, j_{\ell} \le s, \\ i_{1}+\cdots +i_{k}+j_{1}+\cdots +j_{\ell}=s+1, \\ \alpha+k+\frac{\gamma}{2}\ell\in\{\gamma, \frac{3}{2}\gamma-1\}}} C_{\gamma, s, \alpha, i_{1}, \ldots, i_{k}, j_{1}, \ldots, j_{\ell}} \rho^{\alpha}\partial_{x}^{i_{1}}\rho \cdots \partial_{x}^{i_{k}}\rho\partial_{x}^{j_{1}}B\cdots \partial_{x}^{j_{\ell}}B \right .\\
&& \qquad \left .+ B_{0}^{2}\sum_{\substack{k, \ell \ge 0, \\ 1 \le i_{1}, \ldots,  i_{k} \le s, \\ 0 \le j_{1}, \ldots, j_{\ell} \le s, \\ i_{1}+\cdots +i_{k}+j_{1}+\cdots +j_{\ell}=s+1, \\ \alpha+k=-1, \ell=1}}C_{\gamma, s, \alpha, i_{1}, \ldots, i_{k}, j_{1}, \ldots, j_{\ell}}\rho^{\alpha}\partial_{x}^{i_{1}}\rho\cdots \partial_{x}^{i_{k}}\rho\partial_{x}^{j_{1}}B\cdots \partial_{x}^{j_{\ell}}B\right ).
\end{IEEEeqnarray*}
Observe that
\begin{IEEEeqnarray*}{rCl}
\|S\|_{L^{2}} & \le & C_{\gamma, B_{0}, s}\left ( \|\partial_{x}^{s}\rho\|_{L^{2}}+\|\partial_{x}^{s}B\|_{L^{2}}\right )\left ( \|\partial_{x}^{\lfloor\frac{s+3}{2}\rfloor}\rho\|_{L^{2}}+\|\partial_{x}^{\lfloor\frac{s+3}{2}\rfloor}B\|_{L^{2}}\right )\\
&& \left ( \eta^{-1} + \|\rho\|_{H^{\lfloor\frac{s+3}{2}\rfloor}}+\|B\|_{H^{\lfloor \frac{s+3}{2}\rfloor}}\right )^{C_{\gamma, s}}, \\
\|(\rho^{\gamma-1}-\overline{\rho}^{\gamma-1})\partial_{x}^{s+1}\rho\|_{L^{2}} &\le& \|\partial_{x}^{s+1}\rho\|_{L^{2}}\|\partial_{x}(\rho^{\gamma-1})\|_{L^{1}} \\
& \le & C_{\gamma}\|\partial_{x}^{s+1}\rho\|_{L^{2}}\|\partial_{x}\rho\|_{L^{2}}\left ( \eta^{-1}+\|\rho\|_{H^{\lfloor\frac{s+3}{2}\rfloor}}\right )^{C_{\gamma}}, \\
\|(B-\overline{B})\partial_{x}^{s+1}B\|_{L^{2}} &\le& \|\partial_{x}^{s+1}B\|_{L^{2}}\|\partial_{x}B\|_{L^{1}} \\
& \le & C\|\partial_{x}^{s+1}B\|_{L^{2}}\|\partial_{x}B\|_{L^{2}}, \\
\|(B^{2}+B_{0}^{2})(\rho^{-1}-\overline{\rho}^{-1})\partial_{x}^{s+1}B\|_{L^{2}} &\le & C_{B_{0}} \|\partial_{x}^{s+1}B\|_{L^{2}} \|\partial_{x}\rho\|_{L^{2}} \left ( \eta^{-1} + \|\rho\|_{H^{\lfloor\frac{s+3}{2}\rfloor}}+\|B\|_{H^{\lfloor \frac{s+3}{2}\rfloor}}\right )^{C_{\gamma}}, \\
\|(B-\overline{B})(B+\overline{B})\overline{\rho}^{-1}\partial_{x}^{s+1}B\|_{L^{2}} &\le& C \|\partial_{x}^{s+1}B\|_{L^{2}}\|\partial_{x}B\|_{L^{2}}\left ( \eta^{-1} + \|\rho\|_{H^{\lfloor\frac{s+3}{2}\rfloor}}+\|B\|_{H^{\lfloor \frac{s+3}{2}\rfloor}}\right )^{C_{\gamma}}, \\
\|\rho^{\gamma-2}(B-\overline{B})\partial_{x}^{s+1}\rho\|_{L^{2}} &\le& \|\partial_{x}^{s+1}\rho\|_{L^{2}} \|\partial_{x}B\|_{L^{2}} \left ( \eta^{-1} + \|\rho\|_{H^{\lfloor\frac{s+3}{2}\rfloor}}\right )^{C_{\gamma}}, \\
\|(\rho^{\gamma-2}-\overline{\rho}^{\gamma-2})\overline{B}\partial_{x}^{s+1}\rho\|_{L^{2}} &\le& \|\partial_{x}^{s+1}\rho\|_{L^{2}} \|\partial_{x}\rho\|_{L^{2}} \left ( \eta^{-1} + \|\rho\|_{H^{\lfloor\frac{s+3}{2}\rfloor}}+\|B\|_{H^{\lfloor \frac{s+3}{2}\rfloor}}\right )^{C_{\gamma}}.
\end{IEEEeqnarray*}
Here, we have used the following: if $f(x_{0})=0$ for some $x_{0} \in \TT$, then \[
\|f\|_{L^{\infty}} \le \|\partial_{x}f\|_{L^{1}} \le C\|\partial_{x}f\|_{L^{2}},
\] which follows from integration by parts and Jensen's inequality.

Applying these series of inequalities and Young's inequality to \eqref{20260113eq1} gives
\begin{IEEEeqnarray*}{rCl}
\IEEEeqnarraymulticol{3}{l}{\partial_{t}\|\partial_{x}^{s}(\zeta \rho + B)\|_{L^{2}}^{2} + 2\sigma \|\partial_{x}^{s}(\zeta \rho + B)\|_{L^{2}}^{2}} \\
& \le & C_{\gamma, B_{0}, \overline{\rho}, \overline{B}, s} \left (\|\partial_{x}^{s+1}\rho\|_{L^{2}}^{2}+\|\partial_{x}^{s+1}B\|_{L^{2}}^{2}\right )\left ( \|\partial_{x}^{\lfloor\frac{s+3}{2}\rfloor}\rho\|_{L^{2}}+\|\partial_{x}^{\lfloor\frac{s+3}{2}\rfloor}B\|_{L^{2}}\right )^{2} \\
&& \left (\eta^{-1}+\|\rho\|_{H^{\lfloor\frac{s+3}{2}\rfloor}}+\|B\|_{H^{\lfloor\frac{s+3}{2}\rfloor}}\right )^{C_{\gamma, s}}, \yesnumber \label{20260117eq3}
\end{IEEEeqnarray*}
where $\sigma = \sigma(\gamma, B_{0}, \overline{\rho}, \overline{B})>0$ is some positive constant.

\subsection{Proof of Theorem \ref{20260110thm2} for the case $\overline{B}\neq 0$} \label{subsec3.4}

Let $\zeta_{1}=\zeta_{1}(\gamma, B_{0}, \overline{\rho}, \overline{B})$, $\zeta_{2}=\zeta_{2}(\gamma, B_{0}, \overline{\rho}, \overline{B})$, $\sigma = \sigma(\gamma, B_{0}, \overline{\rho}, \overline{B})$ be as in the previous section. Again in this proof, $A_{1}(\cdot), A_{2}(\cdot), \ldots$ denote positive constants whose values remain fixed, and the variables inside the parantheses indicate the parameters on which each constant depends. Let $(\rho, B)$ be a smooth solution to the system \eqref{20260110eq2} such that \[
\|\rho_{t=0}-\overline{\rho}\|_{H^{s}} + \|B_{t=0}-\overline{B}\|_{H^{s}} \le \epsilon, \quad \fint_{\TT}\rho_{t=0}=\overline{\rho}, \quad \fint_{\TT}B_{t=0}=\overline{B}.
\] $\epsilon>0$ will be chosen later. As in Section \ref{subsec3.2}, by letting $\epsilon$ small enough, we may choose $\eta=\eta(\gamma, B_{0}, \overline{\rho}, \overline{B})>0$ such that $\rho, \rho^{-1}, |B| \le \eta^{-1}$. Let $A_{9}>1$ be a constant to be chosen later. Again, it is enough to prove the following four inequalities 
\begin{itemize}
\item[(C1')] $\|\rho(t)-\overline{\rho}\|_{H^{s}} \le A_{9}\epsilon$,
\item[(C2')] $\|B(t)-\overline{B}\|_{H^{s}} \le A_{9}\epsilon$,
\item[(C3')] $\|\zeta_{1}\rho(t)+B(t)-\zeta_{1}\overline{\rho}-\overline{B}\|_{H^{s-2}} \le A_{9}\epsilon e^{-\sigma t}$,
\item[(C4')] $\|\zeta_{2}\rho(t)+B(t)-\zeta_{2}\overline{\rho}-\overline{B}\|_{H^{s-2}} \le A_{9}\epsilon e^{-\sigma t}$,
\end{itemize}
for all time $t$, assuming that
\begin{itemize}
\item[(A1')] $\|\rho(t)-\overline{\rho}\|_{H^{s}} \le 2A_{9}\epsilon$,
\item[(A2')] $\|B(t)-\overline{B}\|_{H^{s}} \le 2A_{9}\epsilon$,
\item[(A3')] $\|\zeta_{1}\rho(t)+B(t)-\zeta_{1}\overline{\rho}-\overline{B}\|_{H^{s-2}} \le 2A_{9}\epsilon e^{-\sigma t}$,
\item[(A4')] $\|\zeta_{2}\rho(t)+B(t)-\zeta_{2}\overline{\rho}-\overline{B}\|_{H^{s-2}} \le 2A_{9}\epsilon e^{-\sigma t}$,
\end{itemize}
for all time $t$. Note that for (A3) and (A4) to hold at $t=0$, $A_{9}$ should be chosen to be greater than some constant depending on $\gamma, B_{0}, \overline{\rho}, \overline{B}$.

By Poincar\'e's inequality and Corollary \ref{20260110cor1},
{\small\begin{IEEEeqnarray*}{rCl}
\IEEEeqnarraymulticol{3}{l}{\|\rho(t)-\overline{\rho}\|_{H^{s}}^{2} + \|B(t)-\overline{B}\|_{H^{s}}^{2}} \\
& \le & C_{s}\left (\|\partial_{x}^{s}\rho(t)\|_{L^{2}}^{2} + \|\partial_{x}^{s}B(t)\|_{L^{2}}^{2}\right ) \\
& \le & C_{s}\left (\|\partial_{x}^{s}\rho_{t=0}\|_{L^{2}}^{2} + \|\partial_{x}^{s}B_{t=0}\|_{L^{2}}^{2}\right ) \left ( \eta^{-1} + \|\rho_{t=0}\|_{H^{\lfloor\frac{s+3}{2}\rfloor}} + \|\rho(t)\|_{H^{\lfloor\frac{s+3}{2}\rfloor}}\right )^{C_{\gamma}} \\
&& \exp \left ( C_{\gamma, B_{0}, s} \int_{0}^{t}  \left ( \|\partial_{x}^{\lfloor\frac{s+3}{2}\rfloor}\rho(\tau)\|_{L^{2}} + \|\partial_{x}^{\lfloor\frac{s+3}{2}\rfloor}B(\tau)\|_{L^{2}}\right ) \left ( \eta^{-1}+\|\rho(\tau)\|_{H^{\lfloor \frac{s+3}{2}\rfloor}}+\|B(\tau)\|_{H^{\lfloor \frac{s+3}{2}\rfloor}}\right )^{C_{\gamma, s}}\dd\tau \right ). \yesnumber \label{20260117eq1}
\end{IEEEeqnarray*}}
Using inequalities analogous to \eqref{20260118eq2}--\eqref{20260118eq5} for \eqref{20260117eq1} gives
{\small \begin{IEEEeqnarray*}{rCl}
\IEEEeqnarraymulticol{3}{l}{\|\rho(t)-\overline{\rho}\|_{H^{s}}^{2} + \|B(t)-\overline{B}\|_{H^{s}}^{2}} \\
& \le & C_{\gamma, s}\epsilon^{2} \left ( \eta^{-1} + A_{9}\epsilon \right )^{C_{\gamma}} \exp \left ( C_{\gamma, B_{0}, s} \int_{0}^{t}\left ( \|\rho(\tau)-\overline{\rho}\|_{H^{s-2}}+\|B(\tau)-\overline{B}\|_{H^{s-2}}\right )\left ( \eta^{-1}+A_{9}\epsilon\right )^{C_{\gamma, s}}\dd\tau\right ) \\
& \le & C_{\gamma, s}\epsilon^{2}\left ( \eta^{-1}+A_{9}\epsilon \right )^{C_{\gamma}}\\
&& \exp \left ( C_{\gamma, B_{0}, \overline{\rho}, \overline{B}, s} \int_{0}^{t} \left ( \|\zeta_{1}\rho(\tau)+B(\tau)-\zeta_{1}\overline{\rho}-\overline{B}\|_{H^{s-2}} + \|\zeta_{2}\rho(\tau)+B(\tau)-\zeta_{2}\overline{\rho}-\overline{B}\|_{H^{s-2}}\right ) \left( \eta^{-1}+A_{9}\epsilon\right )^{C_{\gamma, s}}\dd\tau\right ) \\
& \le & C_{\gamma, s}\epsilon^{2}\left (\eta^{-1}+A_{9}\epsilon\right )^{C_{\gamma}} \exp \left ( C_{\gamma, B_{0}, \overline{\rho}, \overline{B}, s} \int_{0}^{t} A_{9}\epsilon e^{-\sigma \tau}\left (\eta^{-1}+A_{9}\epsilon\right )^{C_{ \gamma, s}}\dd\tau\right ) \\
& \le & A_{10}(\gamma, s)\epsilon^{2}\left (\eta^{-1}+A_{9}\epsilon\right )^{A_{11}(\gamma)}\exp \left ( A_{12}(\gamma, B_{0}, \overline{\rho}, \overline{B}, s)\frac{1}{\sigma}A_{9}\epsilon \left (\eta^{-1}+A_{9}\epsilon\right )^{A_{13}( \gamma, s)}\right ). \yesnumber \label{20260117eq2}
\end{IEEEeqnarray*}}
Now choose $A_{9}=A_{9}(\gamma, B_{0}, \overline{\rho}, \overline{B}, s)$ large enough such that \[
 A_{10}\left (2\eta^{-1}\right )^{A_{11}} \exp \left ( A_{12}\frac{1}{\sigma}\eta^{-1}\left (2\eta^{-1}\right )^{A_{13}} \right ) \le A_{9}^{2},
\]
And choose $\epsilon=\epsilon(\gamma, B_{0}, \overline{\rho}, \overline{B}, s)>0$ small enough such that $A_{9}\epsilon < \eta^{-1}$ and $2A_{9}\epsilon < \frac{1}{2}\overline{\rho}$. Then, \eqref{20260117eq2} reads as \[
\|\rho(t)-\overline{\rho}\|_{H^{s}}^{2}+\|B(t)-\overline{B}\|_{H^{s}}^{2} \le A_{9}^{2}\epsilon^{2}.
\] This proves (C1') and (C2'). To prove (C3') and (C4'), observe that for $\zeta \in \{\zeta_{1}, \zeta_{2}\}$ we have from \eqref{20260117eq3} that
\begin{IEEEeqnarray*}{rCl}
\IEEEeqnarraymulticol{3}{l}{\|\partial_{x}^{s-2}\left (\zeta \rho(t)+B(t)\right )\|_{L^{2}}^{2}} \\
& \le & \|\partial_{x}^{s-2}\left (\zeta \rho_{t=0}+B_{t=0}\right )\|_{L^{2}}^{2}e^{-2\sigma t} \\
&& + e^{-2\sigma t}\int_{0}^{t} e^{2\sigma \tau} C_{\gamma, B_{0}, \overline{\rho}, \overline{B}, s}\left (\|\partial_{x}^{s-1}\rho(\tau)\|_{L^{2}}^{2}+\|\partial_{x}^{s-1}B(\tau)\|_{L^{2}}^{2}\right )\left ( \|\partial_{x}^{\lfloor\frac{s+1}{2}\rfloor}\rho(\tau)\|_{L^{2}}+\|\partial_{x}^{\lfloor\frac{s+1}{2}\rfloor}B(\tau)\|_{L^{2}}\right )^{2} \\
&& \qquad \qquad\left (\eta^{-1}+\|\rho(\tau)\|_{H^{\lfloor\frac{s+1}{2}\rfloor}}+\|B(\tau)\|_{H^{\lfloor\frac{s+1}{2}\rfloor}}\right )^{C_{\gamma, s}}\dd\tau. \yesnumber \label{20260117eq4}
\end{IEEEeqnarray*}
Observe that \[
\|\partial_{x}^{s-2}(\zeta \rho_{t=0}+B_{t=0})\|_{L^{2}} \le (|\zeta|+1) \left (\|\partial_{x}^{s-2}\rho_{t=0}\|_{L^{2}}+\|\partial_{x}^{s-2}B_{t=0}\|_{L^{2}}\right ) \le C_{\gamma, B_{0}, \overline{\rho}, \overline{B}, s}\epsilon,
\] and that 
\begin{IEEEeqnarray*}{rCl}
\|\partial_{x}^{s-1}\rho(\tau)\|_{L^{2}}^{2} & \le & C_{s}\|\rho(\tau)-\overline{\rho}\|_{H^{s-1}}^{2} \\
& \le & C_{s}\|\rho(\tau)-\overline{\rho}\|_{H^{s}}\|\rho(\tau)-\overline{\rho}\|_{H^{s-2}} \\
& \le & C_{\gamma, B_{0}, \overline{\rho}, \overline{B}, s} A_{9}\epsilon \left ( \|\zeta_{1}\rho(\tau)+B(\tau)-\zeta_{1}\overline{\rho}-\overline{B}\|_{H^{s-2}} + \|\zeta_{2}\rho(\tau)+B(\tau)-\zeta_{2}\overline{\rho}-\overline{B}\|_{H^{s-2}}\right ) \\
& \le & C_{\gamma, B_{0}, \overline{\rho}, \overline{B}, s}A_{9}^{2}\epsilon^{2} e^{-\sigma \tau}, \\
\|\partial_{x}^{s-1}B(\tau)\|_{L^{2}}^{2} & \le & C_{\gamma, B_{0}, \overline{\rho}, \overline{B}, s}A_{9}^{2}\epsilon^{2}e^{-\sigma \tau}.
\end{IEEEeqnarray*}
Using these inequalities to \eqref{20260117eq4} gives
\begin{IEEEeqnarray*}{rCl}
\IEEEeqnarraymulticol{3}{l}{\|\partial_{x}^{s-2}(\zeta \rho(t)+B(t))\|_{L^{2}}^{2}} \\
& \le & C_{\gamma, B_{0}, \overline{\rho}, \overline{B}, s}\epsilon^{2} e^{-2\sigma t } + e^{-2\sigma t}\int_{0}^{t}e^{2\sigma \tau} C_{\gamma, B_{0}, \overline{\rho}, \overline{B}, s} A_{9}^{2}\epsilon^{2}e^{-\sigma \tau} A_{9}^{2}\epsilon^{2}e^{-2\sigma \tau} \left ( \eta^{-1}+A_{9}\epsilon\right )^{C_{\gamma, s}}\dd\tau \\
& \le & C_{\gamma, B_{0}, \overline{\rho}, \overline{B}, s}\epsilon^{2}e^{-2\sigma t} + e^{-2\sigma t} \frac{1}{\sigma}C_{\gamma, B_{0}, \overline{\rho}, \overline{B}, s} A_{9}^{4}\epsilon^{4}(2\eta^{-1})^{C_{\gamma, s}}.
\end{IEEEeqnarray*}
Thus by Poincar\'e's inequality, \[
\|\zeta \rho(t) + B(t) - \zeta \overline{\rho}-\overline{B}\|_{H^{s-2}}^{2} \le \epsilon^{2}e^{-2\sigma t}(A_{14}(\gamma, B_{0}, \overline{\rho}, \overline{B}, s) + \frac{1}{\sigma} A_{9}^{4}\epsilon^{2}A_{15}(\gamma, B_{0}, \overline{\rho}, \overline{B}, s)).
\]
Hence, if we have chosen $A_{9}$ large enough so that \[
A_{9}^{2} \ge 2A_{14},
\] and have chosen $\epsilon$ small enough so that \[
\frac{1}{\sigma}A_{9}^{4}\epsilon^{2}A_{15}<A_{14},
\] then we would have \[
\|\zeta \rho(t)+B(t)-\zeta \overline{\rho}-\overline{B}\|_{H^{s-2}} \le A_{9}\epsilon e^{-\sigma t},
\] which is (C3') and (C4'). This completes the proof of Theorem \ref{20260110thm2} for the case $\overline{B}\neq 0$.

\appendix

\section{Deferred calculation from Section \ref{subsec2.2}} \label{appendixA}

In this appendix, we verify that $W(\rho, B)$ and $Z(\rho, B)$ defined in Section \ref{subsec2.2} are $C^{2}$ functions in $(\rho, B)$, and compute their first and second derivatives.

Recall that $1<\gamma<2$ and $B_{0}\neq 0$, and recall also the definition of $f$, $w$, and $W$ from Section \ref{subsec2.2}:
\begin{IEEEeqnarray*}{rCl}
f(\rho, B)&:=& \begin{cases}
\infty & B=0, \rho^{\gamma}\le B_{0}^{2}, \\
\frac{4\rho^{\gamma}}{-\big(B^{2}+B_{0}^{2}-\rho^{\gamma}\big)+\sqrt{\big(B^{2}+B_{0}^{2}-\rho^{\gamma}\big)^{2}+4B^{2}\rho^{\gamma}}}& \text{otherwise.}
\end{cases}, \\
w(\rho, B) &:=& \begin{cases}
(B^{2}+B_{0}^{2}) f^{\frac{2}{2-\gamma}} - \int_{f}^{4}\frac{2B_{0}^{2}}{2-\gamma}\frac{1-\frac{1}{4}s}{\left(1-\frac{1}{2}s\right )^{2}}s^{\frac{2}{2-\gamma}}\dd s & f < \infty, \\
\infty & f = \infty.
\end{cases}, \\
W(\rho, B) &:=& e^{-w}.
\end{IEEEeqnarray*}

We first claim that $(\rho, B) \mapsto f(\rho, B)$ is continuous, from $\RR^{+} \times \RR$ to $\RR \cup \{\infty\}$. It is enough to prove that for any $\rho_{\infty}>0$ satisfying $\rho_{\infty}^{\gamma} \le B_{0}^{2}$, one has \[
\lim_{(\rho, B) \to (\rho_{\infty}, 0)} f(\rho, B)=\infty.
\]
This is true since the denominator in the definition of $f$ is positive and converges to zero as $(\rho, B) \to (\rho_{\infty}, 0)$.

Next we claim that $(\rho, B)\mapsto w(\rho, B)$ is continuous. Continuity on $\{f<\infty\}$ is clear. Hence it is enough to prove that as $(\rho, B) \to (\rho_{\infty}, 0)$ where $f(\rho_{\infty}, 0)=\infty$, one has that \[
w(\rho, B) \to w(\rho_{\infty}, 0)=\infty.
\] Indeed, for $(\rho, B)$ with $f(\rho, B)<\infty$,
\begin{IEEEeqnarray*}{rCl}
w&=& B^{2} f^{\frac{2}{2-\gamma}} + B_{0}^{2}\int_{0}^{f}\frac{2}{2-\gamma}s^{\frac{\gamma}{2-\gamma}}\dd s - \int_{f}^{4}\frac{2B_{0}^{2}}{2-\gamma}\frac{1-\frac{1}{4}s}{\left (1-\frac{1}{2}s\right )^{2}} s^{\frac{2}{2-\gamma}}\dd s \\
&=& B^{2}f^{\frac{2}{2-\gamma}}+4^{\frac{2}{2-\gamma}}B_{0}^{2} + \int_{4}^{f} \frac{2B_{0}^{2}}{2-\gamma} \frac{1}{\left (1-\frac{1}{2}s\right )^{2}}s^{\frac{\gamma}{2-\gamma}} \dd s \\
& \ge & \frac{2B_{0}^{2}}{2-\gamma} \int_{4}^{f}\frac{1}{\left (1-\frac{1}{2}s\right )^{2}}s^{\frac{\gamma}{2-\gamma}}\dd s.
\end{IEEEeqnarray*}
So, as $(\rho, B) \to (\rho_{\infty}, 0)$, we have $f(\rho, B) \to \infty$ due to continuity of $f$, and since we have \[
\int_{4}^{\infty}\frac{1}{\left (1-\frac{1}{2}s\right )^{2}}s^{\frac{\gamma}{2-\gamma}}\dd s =\infty,
\] thanks to $1<\gamma<2$, we obtain $w(\rho, B) \to \infty$ as desired. Hence, $(\rho, B) \to w(\rho, B)$ is continuous from $\RR^{+}\times \RR$ to $\RR \cup \{\infty\}$.

Now we show that $(\rho, B) \mapsto W(\rho, B)$ is $C^{2}$ and provide the computation of its derivatives. First, if $B \neq 0$, then
\begin{IEEEeqnarray*}{rCl}
\partial_{\rho}f &=& \frac{\gamma\rho^{\gamma-1}}{B^{2}}\left [ -1 + \frac{B^{2}-B_{0}^{2}+\rho^{\gamma}}{\sqrt{\left (B^{2}+B_{0}^{2}-\rho^{\gamma}\right )^{2}+4B^{2}\rho^{\gamma}}}\right ] \\
&=& -\frac{4 \gamma \rho^{\gamma-1}B_{0}^{2}}{\left (B^{2}+B_{0}^{2}-\rho^{\gamma}\right )^{2}+4B^{2}\rho^{\gamma}}\frac{1}{1+ \frac{B^{2}-B_{0}^{2}+\rho^{\gamma}}{\sqrt{\left (B^{2}+B_{0}^{2}-\rho^{\gamma}\right )^{2}+4B^{2}\rho^{\gamma}}}}, \\
\partial_{B}f &=& \frac{2}{B}\frac{B^{2}+B_{0}^{2}-\rho^{\gamma}}{\sqrt{\left (B^{2}+B_{0}^{2}-\rho^{\gamma}\right )^{2}+4B^{2}\rho^{\gamma}}}-\frac{2}{B^{3}}\left [ B_{0}^{2}-\rho^{\gamma}+\sqrt{\left (B^{2}+B_{0}^{2}-\rho^{\gamma}\right )^{2}+4B^{2}\rho^{\gamma}}\right ] \\
&=& \frac{1}{B^{3}}\left [ B^{2}-B_{0}^{2}+\rho^{\gamma}-\sqrt{\left (B^{2}+B_{0}^{2}-\rho^{\gamma}\right )^{2}+4B^{2}\rho^{\gamma}}\right ] \left [ B^{2}+B_{0}^{2}-\rho^{\gamma}+\sqrt{\left (B^{2}+B_{0}^{2}-\rho^{\gamma}\right )^{2}+4B^{2}\rho^{\gamma}}\right ] \\
&=& \frac{f}{B}\left [ -1+\frac{B^{2}-B_{0}^{2}+\rho^{\gamma}}{\sqrt{\left (B^{2}+B_{0}^{2}-\rho^{\gamma}\right )^{2}+4B^{2}\rho^{\gamma}}}\right ] \\
&=& -\frac{4 B_{0}^{2}Bf}{\left (B^{2}+B_{0}^{2}-\rho^{\gamma}\right )^{2}+4B^{2}\rho^{\gamma}}\frac{1}{1+ \frac{B^{2}-B_{0}^{2}+\rho^{\gamma}}{\sqrt{\left (B^{2}+B_{0}^{2}-\rho^{\gamma}\right )^{2}+4B^{2}\rho^{\gamma}}}}.
\end{IEEEeqnarray*}

If $B=0$ and $\rho^{\gamma}>B_{0}^{2}$, then we have
\begin{IEEEeqnarray*}{rCl}
\partial_{\rho}f &=& \partial_{\rho}\left ( \frac{2\rho^{\gamma}}{\rho^{\gamma}-B_{0}^{2}} \right )\\
&=& -2B_{0}^{2}\frac{\gamma\rho^{\gamma-1}}{\left (\rho^{\gamma}-B_{0}^{2}\right )^{2}} \\
&=& -\frac{4 \gamma \rho^{\gamma-1}B_{0}^{2}}{\left (B^{2}+B_{0}^{2}-\rho^{\gamma}\right )^{2}+4B^{2}\rho^{\gamma}}\frac{1}{1+ \frac{B^{2}-B_{0}^{2}+\rho^{\gamma}}{\sqrt{\left (B^{2}+B_{0}^{2}-\rho^{\gamma}\right )^{2}+4B^{2}\rho^{\gamma}}}}, \\
\partial_{B}f &=& \lim_{h \to 0}\frac{1}{h}\left ( f(\rho, h)-f(\rho, 0)\right ) \\
&=& \lim_{h \to 0}\partial_{B}f(\rho, h) \quad (\because \text{L'Hospital's rule}) \\
&=& -\lim_{B \to 0}\frac{4 B_{0}^{2}Bf}{\left (B^{2}+B_{0}^{2}-\rho^{\gamma}\right )^{2}+4B^{2}\rho^{\gamma}}\frac{1}{1+ \frac{B^{2}-B_{0}^{2}+\rho^{\gamma}}{\sqrt{\left (B^{2}+B_{0}^{2}-\rho^{\gamma}\right )^{2}+4B^{2}\rho^{\gamma}}}} \\
&=& 0 \\
&=& -\frac{4 B_{0}^{2}Bf}{\left (B^{2}+B_{0}^{2}-\rho^{\gamma}\right )^{2}+4B^{2}\rho^{\gamma}}\frac{1}{1+ \frac{B^{2}-B_{0}^{2}+\rho^{\gamma}}{\sqrt{\left (B^{2}+B_{0}^{2}-\rho^{\gamma}\right )^{2}+4B^{2}\rho^{\gamma}}}}.
\end{IEEEeqnarray*}

Hence, we conclude that for any $(\rho, B)$ satisfying $f(\rho, B)<\infty$, $f$ is continuously differentiable and
\begin{IEEEeqnarray*}{rCl}
\partial_{\rho}f &=& -\frac{4 \gamma \rho^{\gamma-1}B_{0}^{2}}{\left (B^{2}+B_{0}^{2}-\rho^{\gamma}\right )^{2}+4B^{2}\rho^{\gamma}}\frac{1}{1+ \frac{B^{2}-B_{0}^{2}+\rho^{\gamma}}{\sqrt{\left (B^{2}+B_{0}^{2}-\rho^{\gamma}\right )^{2}+4B^{2}\rho^{\gamma}}}} \quad \text{if }f<\infty, \yesnumber \label{20260117eq11} \\
\partial_{B}f &=& -\frac{4 B_{0}^{2}Bf}{\left (B^{2}+B_{0}^{2}-\rho^{\gamma}\right )^{2}+4B^{2}\rho^{\gamma}}\frac{1}{1+ \frac{B^{2}-B_{0}^{2}+\rho^{\gamma}}{\sqrt{\left (B^{2}+B_{0}^{2}-\rho^{\gamma}\right )^{2}+4B^{2}\rho^{\gamma}}}} \quad \text{if }f<\infty. \yesnumber \label{20260117eq12}
\end{IEEEeqnarray*}

Therefore, if $f<\infty$, we have
\begin{IEEEeqnarray*}{rCl}
\partial_{\rho}w &=& (B^{2}+B_{0}^{2})\frac{2}{2-\gamma}f^{\frac{\gamma}{2-\gamma}}\partial_{\rho}f + \frac{2B_{0}^{2}}{2-\gamma}\frac{1-\frac{1}{4}f}{\left (1-\frac{1}{2}f\right )^{2}} f^{\frac{2}{2-\gamma}}\partial_{\rho}f  \\
&=& -\frac{8\gamma}{2-\gamma}f^{\frac{\gamma}{2-\gamma}} \frac{\rho^{\gamma-1}B_{0}^{2}}{\left (B^{2}+B_{0}^{2}-\rho^{\gamma}\right )^{2}+4B^{2}\rho^{\gamma}}\frac{1}{1+ \frac{B^{2}-B_{0}^{2}+\rho^{\gamma}}{\sqrt{\left (B^{2}+B_{0}^{2}-\rho^{\gamma}\right )^{2}+4B^{2}\rho^{\gamma}}}} \left ( B^{2}+\frac{B_{0}^{2}}{\left (1-\frac{1}{2}f\right )^{2}}\right ) \\
&=& -\frac{8\gamma}{2-\gamma}f^{\frac{\gamma}{2-\gamma}} \frac{\rho^{\gamma-1}B_{0}^{2}}{\left (B^{2}+B_{0}^{2}-\rho^{\gamma}\right )^{2}+4B^{2}\rho^{\gamma}}\frac{1}{1+ \frac{B^{2}-B_{0}^{2}+\rho^{\gamma}}{\sqrt{\left (B^{2}+B_{0}^{2}-\rho^{\gamma}\right )^{2}+4B^{2}\rho^{\gamma}}}} \\
&& \phantom{+}\left ( B^{2}+B_{0}^{2}\left ( \frac{-(B^{2}-B_{0}^{2}+\rho^{\gamma})-\sqrt{\left (B^{2}+B_{0}^{2}-\rho^{\gamma}\right )^{2}+4B^{2}\rho^{\gamma}}}{2B_{0}^{2}}\right )^{2}\right ) \\
&=& -\frac{4\gamma}{2-\gamma}\rho^{\gamma-1}f^{\frac{\gamma}{2-\gamma}} \frac{4B_{0}^{2}B^{2}+(B^{2}-B_{0}^{2}+\rho^{\gamma})^{2}+(B^{2}-B_{0}^{2}+\rho^{\gamma})\sqrt{\left (B^{2}+B_{0}^{2}-\rho^{\gamma}\right )^{2}+4B^{2}\rho^{\gamma}}}{(B^{2}+B_{0}^{2}-\rho^{\gamma})^{2}+4B^{2}\rho^{\gamma}+(B^{2}-B_{0}^{2}+\rho^{\gamma})\sqrt{\left (B^{2}+B_{0}^{2}-\rho^{\gamma}\right )^{2}+4B^{2}\rho^{\gamma}}} \\
&=& -\frac{4\gamma}{2-\gamma}\rho^{\gamma-1}f^{\frac{\gamma}{2-\gamma}}, \yesnumber \label{20260117eq13} \\
\partial_{B}w &=& (B^{2}+B_{0}^{2})\frac{2}{2-\gamma}f^{\frac{\gamma}{2-\gamma}}\partial_{B}f + \frac{2B_{0}^{2}}{2-\gamma}\frac{1-\frac{1}{4}f}{\left (1-\frac{1}{2}f\right )^{2}}f^{\frac{2}{2-\gamma}}\partial_{B}f + 2B f^{\frac{2}{2-\gamma}} \\
&=& -\frac{4\gamma}{2-\gamma}\rho^{\gamma-1}f^{\frac{\gamma}{2-\gamma}}\frac{Bf}{\gamma \rho^{\gamma-1}} + 2Bf^{\frac{2}{2-\gamma}},  \\
&=& -\frac{2\gamma}{2-\gamma}Bf^{\frac{2}{2-\gamma}}, \yesnumber \label{20260117eq14} \\
\partial_{\rho}W &=& \exp(-w)\frac{4\gamma}{2-\gamma}\rho^{\gamma-1}f^{\frac{\gamma}{2-\gamma}},  \\
\partial_{B}W &=& \exp(-w)\frac{2\gamma}{2-\gamma}Bf^{\frac{2}{2-\gamma}}. 
\end{IEEEeqnarray*}

Now observe that $\partial_{\rho}W$ and $\partial_{B}W$ converges to zero as $(\rho, B)$ approaches to a point where $f=\infty$. This is because \[
\exp(w) \ge C\exp\left ( C\int_{4}^{f}\frac{s^{\frac{\gamma}{2-\gamma}}}{\left (1-\frac{1}{2}s\right )^{2}}\dd s\right ) \ge C\exp\left ( C f^{\frac{2\gamma-2}{2-\gamma}}\right )
\] as $f \to \infty$. (The last inequality holds since $1<\gamma<2$.) Hence by L'Hospital's rule, we conclude that \begin{IEEEeqnarray*}{rCl}
\partial_{\rho}W &=& \begin{cases}
\exp(-w)\frac{4\gamma}{2-\gamma}\rho^{\gamma-1}f^{\frac{\gamma}{2-\gamma}} & f<\infty, \\
0 & f=\infty.
\end{cases}, \yesnumber \label{20260117eq15} \\
\partial_{B}W &=& \begin{cases}
\exp(-w)\frac{2\gamma}{2-\gamma}Bf^{\frac{2}{2-\gamma}} & f<\infty, \\
0 & f=\infty.
\end{cases}\yesnumber \label{20260117eq16}
\end{IEEEeqnarray*}

Differentiating it once again, we obtain that for any $(\rho, B)$ with $f(\rho, B)<\infty$,
\begin{IEEEeqnarray*}{rCl}
\partial_{\rho\rho}W &=& \frac{4\gamma}{2-\gamma}\exp(-w) \left ( (\gamma-1)\rho^{\gamma-2}f^{\frac{\gamma}{2-\gamma}} + \rho^{\gamma-1}\frac{\gamma}{2-\gamma}f^{\frac{2\gamma-2}{2-\gamma}}\partial_{\rho}f - \rho^{\gamma-1}f^{\frac{\gamma}{2-\gamma}}\partial_{\rho}w\right ) \\
&=& \frac{4\gamma}{2-\gamma}\exp(-w)f^{\frac{\gamma}{2-\gamma}}\rho^{\gamma-1}\\
&& \left ( (\gamma-1)\rho^{-1} - \frac{\gamma}{2-\gamma}f^{-1}\frac{4 \gamma \rho^{\gamma-1}B_{0}^{2}}{\left (B^{2}+B_{0}^{2}-\rho^{\gamma}\right )^{2}+4B^{2}\rho^{\gamma}}\frac{1}{1+ \frac{B^{2}-B_{0}^{2}+\rho^{\gamma}}{\sqrt{\left (B^{2}+B_{0}^{2}-\rho^{\gamma}\right )^{2}+4B^{2}\rho^{\gamma}}}} + \frac{4\gamma}{2-\gamma}\rho^{\gamma-1}f^{\frac{\gamma}{2-\gamma}}\right ) \\
&=& \frac{4\gamma}{2-\gamma}\exp(-w)f^{\frac{\gamma}{2-\gamma}}\rho^{\gamma-1}\\
&& \left ( (\gamma-1)\rho^{-1} - \frac{\gamma}{2-\gamma}f^{-1}\frac{4 \gamma \rho^{\gamma-1}B_{0}^{2}}{\left (B^{2}+B_{0}^{2}-\rho^{\gamma}\right )^{2}+4B^{2}\rho^{\gamma}}\frac{1- \frac{B^{2}-B_{0}^{2}+\rho^{\gamma}}{\sqrt{\left (B^{2}+B_{0}^{2}-\rho^{\gamma}\right )^{2}+4B^{2}\rho^{\gamma}}}}{\frac{4B^{2}\rho^{\gamma}}{\left (B^{2}+B_{0}^{2}-\rho^{\gamma}\right )^{2}+4B^{2}\rho^{\gamma}}} + \frac{4\gamma}{2-\gamma}\rho^{\gamma-1}f^{\frac{\gamma}{2-\gamma}}\right ), \\
\partial_{\rho B}W &=& \frac{4\gamma}{2-\gamma}\exp(-w) \left ( \rho^{\gamma-1}\frac{\gamma}{2-\gamma}f^{\frac{2\gamma-2}{2-\gamma}}\partial_{B}f - \rho^{\gamma-1}f^{\frac{\gamma}{2-\gamma}}\partial_{B}w\right ) \\
&=& \frac{8\gamma^{2}}{\left (2-\gamma\right )^{2}}\exp(-w)f^{\frac{\gamma}{2-\gamma}}\rho^{\gamma-1}B\left ( -\frac{2 B_{0}^{2}}{\left (B^{2}+B_{0}^{2}-\rho^{\gamma}\right )^{2}+4B^{2}\rho^{\gamma}}\frac{1}{1+ \frac{B^{2}-B_{0}^{2}+\rho^{\gamma}}{\sqrt{\left (B^{2}+B_{0}^{2}-\rho^{\gamma}\right )^{2}+4B^{2}\rho^{\gamma}}}}+f^{\frac{2}{2-\gamma}}\right ), \\
\partial_{BB}W &=& \frac{2\gamma}{2-\gamma}\exp(-w)\left ( f^{\frac{2}{2-\gamma}} + \frac{2}{2-\gamma}Bf^{\frac{\gamma}{2-\gamma}}\partial_{B}f - Bf^{\frac{2}{2-\gamma}}\partial_{B}w\right ) \\
&=& \frac{2\gamma}{2-\gamma}\exp(-w)f^{\frac{2}{2-\gamma}}\\
&& \left ( 1 - \frac{2}{2-\gamma}\frac{4 B_{0}^{2}B^{2}}{\left (B^{2}+B_{0}^{2}-\rho^{\gamma}\right )^{2}+4B^{2}\rho^{\gamma}}\frac{1}{1+ \frac{B^{2}-B_{0}^{2}+\rho^{\gamma}}{\sqrt{\left (B^{2}+B_{0}^{2}-\rho^{\gamma}\right )^{2}+4B^{2}\rho^{\gamma}}}} + \frac{2\gamma}{2-\gamma}B^{2}f^{\frac{2}{2-\gamma}}\right ).
\end{IEEEeqnarray*}
It is routine to check that all these second derivatives converge to zero as $(\rho, B)$ converges to $(\rho_{\infty}, 0)$ where $\rho_{\infty}^{\gamma} \le B_{0}^{2}$. Indeed, the only hard part is to check that \[
\exp(-w)f^{\frac{2}{2-\gamma}}\frac{1}{\left (B^{2}+B_{0}^{2}-\rho^{\gamma}\right )^{2}+4B^{2}\rho^{\gamma}}\frac{1}{1+ \frac{B^{2}-B_{0}^{2}+\rho^{\gamma}}{\sqrt{\left (B^{2}+B_{0}^{2}-\rho^{\gamma}\right )^{2}+4B^{2}\rho^{\gamma}}}} \to 0
\] as $(\rho, B) \to (\rho_{\infty}, 0)$ where $\rho_{\infty}^{\gamma}=B_{0}^{2}$, and this follows because
\begin{IEEEeqnarray*}{rCl}
\IEEEeqnarraymulticol{3}{l}{\frac{1}{\left (B^{2}+B_{0}^{2}-\rho^{\gamma}\right )^{2}+4B^{2}\rho^{\gamma}}\frac{1}{1+ \frac{B^{2}-B_{0}^{2}+\rho^{\gamma}}{\sqrt{\left (B^{2}+B_{0}^{2}-\rho^{\gamma}\right )^{2}+4B^{2}\rho^{\gamma}}}}} \\
& \le & \frac{1}{\left (B^{2}+B_{0}^{2}-\rho^{\gamma}\right )^{2}+4B^{2}\rho^{\gamma}}\frac{1}{1+ \frac{-B^{2}-B_{0}^{2}+\rho^{\gamma}}{\sqrt{\left (B^{2}+B_{0}^{2}-\rho^{\gamma}\right )^{2}+4B^{2}\rho^{\gamma}}}} \\
&=& \frac{f}{4\rho^{\gamma}\sqrt{\left (B^{2}+B_{0}^{2}-\rho^{\gamma}\right )^{2}+4B^{2}\rho^{\gamma}}} \\
&\le& \frac{f^{2}}{4\rho^{\gamma}\sqrt{\left (B^{2}+B_{0}^{2}-\rho^{\gamma}\right )^{2}+4B^{2}\rho^{\gamma}}\frac{B_{0}^{2}-\rho^{\gamma}+\sqrt{\left (B^{2}+B_{0}^{2}-\rho^{\gamma}\right )^{2}+4B^{2}\rho^{\gamma}}}{B^{2}}} \\
& = & \frac{f^{2}\left ( \sqrt{\left (B_{0}^{2}-\rho^{\gamma}\right )^{2} + 2B^{2}(B_{0}^{2}+\rho^{\gamma})+B^{4}} - (B_{0}^{2}-\rho^{\gamma})\right )}{4\rho^{\gamma}\sqrt{\left (B^{2}+B_{0}^{2}-\rho^{\gamma}\right )^{2}+4B^{2}\rho^{\gamma}}\left ( B^{2}+2(B_{0}^{2}+\rho^{\gamma})\right )} \\
& \le & \frac{f^{2}}{4\rho^{\gamma}B_{0}^{2}} \ll f^{-\frac{2}{2-\gamma}}\exp(w).
\end{IEEEeqnarray*}
Hence, again by L'Hospital's rule, we conclude that \begin{IEEEeqnarray*}{rCl}
\partial_{\rho\rho}W &=& \begin{cases}
\frac{4\gamma}{2-\gamma}\exp(-w) \left ( (\gamma-1)\rho^{\gamma-2}f^{\frac{\gamma}{2-\gamma}} + \rho^{\gamma-1}\frac{\gamma}{2-\gamma}f^{\frac{2\gamma-2}{2-\gamma}}\partial_{\rho}f - \rho^{\gamma-1}f^{\frac{\gamma}{2-\gamma}}\partial_{\rho}w\right ) & \text{if }f<\infty, \\
0 & \text{if }f=\infty.
\end{cases}, \yesnumber \label{20260117eq17} \\
\partial_{\rho B}W &=& \begin{cases}
\frac{4\gamma}{2-\gamma}\exp(-w) \left ( \rho^{\gamma-1}\frac{\gamma}{2-\gamma}f^{\frac{2\gamma-2}{2-\gamma}}\partial_{B}f - \rho^{\gamma-1}f^{\frac{\gamma}{2-\gamma}}\partial_{B}w\right ) & \text{if }f<\infty, \\
0 & \text{if }f=\infty.
\end{cases}, \yesnumber \label{20260117eq18} \\
\partial_{BB}W &=& \begin{cases}
\frac{2\gamma}{2-\gamma}\exp(-w)\left ( f^{\frac{2}{2-\gamma}} + \frac{2}{2-\gamma}Bf^{\frac{\gamma}{2-\gamma}}\partial_{B}f - Bf^{\frac{2}{2-\gamma}}\partial_{B}w\right ) & \text{if }f<\infty, \\
0 & \text{if }f=\infty.
\end{cases}. \yesnumber \label{20260117eq19}
\end{IEEEeqnarray*} This completes the proof that $(\rho, B) \mapsto W(\rho, B)$ is $C^{2}$.

Similar computation shows that $(\rho, B) \mapsto g, z$ are continuous from $\RR^{+}\times \RR$ to $\RR \cup \{\infty\}$, and that $(\rho, B) \mapsto Z$ is $C^{2}$. The derivatives are given by
\begin{IEEEeqnarray*}{rCl}
\partial_{\rho}g &=& \frac{4B_{0}^{2}\gamma \rho^{\gamma-1}}{\big(B^{2}+B_{0}^{2}-\rho^{\gamma}\big)^{2}+4B^{2}\rho^{\gamma}}\frac{1}{1-\frac{B^{2}-B_{0}^{2}+\rho^{\gamma}}{\sqrt{\big( B^{2}+B_{0}^{2}-\rho^{\gamma}\big)^{2}+4B^{2}\rho^{\gamma}}}} \qquad \text{if }g<\infty, \yesnumber \label{20260117eq20}\\
\partial_{B}g &=& 
-\frac{4B_{0}^{2}Bg}{\big(B^{2}+B_{0}^{2}-\rho^{\gamma}\big)^{2}+4B^{2}\rho^{\gamma}}\frac{1}{1-\frac{B^{2}-B_{0}^{2}+\rho^{\gamma}}{\sqrt{\big( B^{2}+B_{0}^{2}-\rho^{\gamma}\big)^{2}+4B^{2}\rho^{\gamma}}}}   \qquad \text{if }g<\infty, \yesnumber \label{20260117eq21}\\
\partial_{\rho}z &=& \frac{4\gamma}{2-\gamma}g^{\frac{\gamma}{2-\gamma}}\rho^{\gamma-1} \qquad \text{if }g<\infty, \yesnumber \label{20260117eq22}\\
\partial_{B}z &=& -\frac{2\gamma}{2-\gamma}g^{\frac{2}{2-\gamma}}B \qquad \text{if }g<\infty, \yesnumber \label{20260117eq23}\\
\partial_{\rho}Z &=& \begin{cases}
-\exp(-z)\frac{4\gamma}{2-\gamma}g^{\frac{\gamma}{2-\gamma}}\rho^{\gamma-1} & \text{if }g<\infty, \\
0 & \text{if }g=\infty.
\end{cases}, \yesnumber \label{20260117eq24}\\
\partial_{B}Z &=& \begin{cases} \exp(-z)\frac{2\gamma}{2-\gamma}g^{\frac{2}{2-\gamma}}B & \text{if }g<\infty, \\
0 & \text{if }g=\infty.
\end{cases}, \yesnumber \label{20260117eq25}\\
\partial_{\rho\rho}Z &=& \begin{cases} -\frac{4\gamma}{2-\gamma}\exp(-z)\left ( (\gamma-1)g^{\frac{\gamma}{2-\gamma}}\rho^{\gamma-2} + \frac{\gamma}{2-\gamma}\rho^{\gamma-1}g^{\frac{2\gamma-2}{2-\gamma}}\partial_{\rho}g - \rho^{\gamma-1}g^{\frac{\gamma}{2-\gamma}}\partial_{\rho}z \right ) & \text{if }g<\infty, \\
0 & \text{if }g=\infty.
\end{cases}, \yesnumber \label{20260117eq26}\\
\partial_{\rho B}Z &=& \begin{cases}-\frac{4\gamma}{2-\gamma}\exp(-z)\left ( \frac{\gamma}{2-\gamma}\rho^{\gamma-1}g^{\frac{2\gamma-2}{2-\gamma}}\partial_{B}g - \rho^{\gamma-1}g^{\frac{\gamma}{2-\gamma}}\partial_{B}z \right ) & \text{if }g<\infty, \\
0 & \text{if }g=\infty. \end{cases}, \yesnumber \label{20260117eq27}\\
\partial_{BB}Z &=& \begin{cases}\frac{2\gamma}{2-\gamma}\exp(-z)\left ( g^{\frac{2}{2-\gamma}} + \frac{2}{2-\gamma}g^{\frac{\gamma}{2-\gamma}}B\partial_{B}g - g^{\frac{2}{2-\gamma}}B\partial_{B}z\right ) & \text{if }g<\infty, \\
0 & \text{if }g=\infty. \end{cases}.\yesnumber \label{20260117eq28}
\end{IEEEeqnarray*}

\bibliographystyle{plain}
\bibliography{references}

\end{document}